\title{Symmetric $\mathcal{A}$ actions on $\mathcal{A}(2)$}
\author{Robert R. Bruner}
\address{Department of Mathematics, Wayne State University, Detroit, MI, USA}
\email{robert.bruner@wayne.edu} \urladdr{http://www.rrb.wayne.edu/}
\newtheorem{theorem}{Theorem}[section]
\newtheorem{proposition}[theorem]{Proposition}
\newtheorem{corollary}[theorem]{Corollary}
\newtheorem{conjecture}[theorem]{Conjecture}
\theoremstyle{definition}
\newtheorem{definition}[theorem]{Definition}
\theoremstyle{remark}
\newtheorem{remark}[theorem]{Remark}
\DeclareMathOperator{\Hom}{Hom}
\DeclareMathOperator{\Img}{Im}
\newcommand{\longto}{\longrightarrow}
\newcommand{\modmod}{/\!/}
\newcommand{\cA}{\mathcal{A}}
\newcommand{\cB}{\mathcal{B}}
\newcommand{\cQ}{\mathcal{Q}}
\newcommand{\F}{\mathbf{F}}
\begin{document}

\begin{abstract}
We describe the variety of
left actions of the mod 2 Steenrod algebra $\cA$ on its
subalgebra $\cA(2)$ that extend the action of $\cA(2)$
on itself and preserve the short exact sequence
\[
\tag{$\mathcal{Q}$}
0 \to \Img{Q_2} \longto \cA(2) \longto \cA(2)\modmod E[Q_2] \to 0
\]
together with the isomorphism between $\Img{Q_2}$
and the quotient $\cA(2)\modmod E[Q_2]$.
We call such $\cA$ actions {\em symmetric}.
These arise as the cohomology of $v_2$ self maps
$\Sigma^7 Z \longto Z$, as in \cite{BE} and \cite{BBBCX}.

We find that there are $256$ $\F_2$ points in this variety,
arising from
$16$ such actions of $Sq^8$ and, independently,
another $16$ actions of $Sq^{16}$.
This is in contrast to the $1600$ $\cA$-module structures
on $\cA(2)$ found by~\cite{Roth}, which do not necessarily
relate in this fashion to $\Img{Q_2}$.

We also describe the variety  of actions
found by Roth, arising from $100$ possible $Sq^8$
actions and, independently, $16$ possible $Sq^{16}$ actions,
and the embedding of the variety of symmetric actions into
the variety of all actions.  We then describe two related varieties
and the maps between them.

Next, we examine the effect of Spanier-Whitehead duality on
these $\cA$-actions.

Finally, we note that the actions which have been used in the literature
correspond to the simplest choices, in which all the coordinates
equal zero.

\end{abstract}
\maketitle

\setcounter{tocdepth}{1}
\tableofcontents

\renewcommand*{\arraystretch}{1.4}

\section{Introduction and Results}

Let $\cA$ be the mod 2 Steenrod algebra and let $\cA(2)$
be the sub Hopf algebra generated by $Sq^1$, $Sq^2$ and $Sq^4$.
Spectra whose cohomology is $\cA(2)$, and related spectra, have
proven to be useful tools in algebraic topology.
Such cohomology modules have
actions of the whole Steenrod algebra.  It is therefore interesting
to inquire about the possibilities of such actions.  

The spectra of most interest in the literature,
(e.g., see~\cite{BE} and \cite{BBBCX}), arise as the cofibers of maps
\[
\Sigma^7 Z \overset{v}{\longto} Z
\]
detected by $Q_2 \in \cA(2)$, whose cohomology long exact sequence
is the short exact sequence 
\[
\label{Qext}
\tag{${\mathcal{Q}}$}
0 \to \Img{Q_2} \longto \cA(2) \longto \cA(2)\modmod E[Q_2] \to 0
\]
of $\cA$-modules, with some, as of yet unspecified, $\cA$ action
extending the evident $\cA(2)$ action.
Recall that 
\[
\cA(2)\modmod E[Q_2] = \cA(2)/\cA(2)Q_2
\]
and that $\Img{Q_2} = \cA(2)Q_2 = Q_2\cA(2)$.
Here, $Q_2 = [Sq^4, Q_1]$ with
$Q_1 = [Sq^2, Sq^1]$, and $Q_2$ is central in $\cA(2)$.
In this situation, there is
an isomorphism of $\cA$-modules $\Img{Q_2} \cong \Sigma^7
\cA(2)\modmod E[Q_2]$.
Our primary focus here is the variety of such actions.  

\begin{definition}
\label{defsym}
A {\em symmetric} $\cA$ action on $\cA(2)$ is one which
extends the product on $\cA(2)$, and further, is required 
\begin{enumerate}
\item
to make
the submodule $\Img{Q_2}$ an $\cA$-submodule, and 
\item
\label{defsym2}
to make the $\cA(2)$ isomorphism
$\Sigma^7 \cA(2)\modmod E[Q_2] \longto \Img{Q_2}$
given by $[x] = x + \Img{Q_2} \mapsto xQ_2$
an $\cA$-module isomorphism.
\end{enumerate}
\end{definition}

\begin{definition}
Let $\cB(2)  = \cA(2)\modmod E[Q_2]$.
\end{definition}

If we implicitly incorporate the isomorphism in~\ref{defsym}.(2),
then we are parameterizing the $\cA$-module extensions
\begin{equation}
\label{AZext}
0 \to \Sigma^7 B \longto A \longto B \to 0
\end{equation}
whose restriction to $\cA(2)$-Mod is the
short exact sequence
\begin{equation}
\label{Zext}
\tag{$\mathcal{E}$}
0 \to \Sigma^7\cB(2) \longto \cA(2) \longto \cB(2) \to 0.
\end{equation}
These are the actions which we call {\em symmetric}.

In Section~\ref{sec:symmetric} we 
describe the variety of symmetric $\cA$-actions on $\cA(2)$
and show that it has $256$ $\F_2$ points.   By an observation
of Marcel B\"okstedt, it fibers over an affine space $\F_2^4$
parameterizing the $Sq^8$ actions, with fiber $\F_2^4$
parameterizing the $Sq^{16}$-actions.

We then consider the general case in Section~\ref{sec:general},
recovering the result of Marilyn Roth in~\cite{Roth} that there
are $1600$ $\cA$-actions on $\cA(2)$ if we do not impose the
condition of symmetry.  Again, we describe this as the $\F_2$ 
points of an algebraic variety.

In each case, since we are only interested in the $\F_2$ points,
we augment the relations imposed by the Adem relations by the
relation $x^2 = x$ for each of the parameters $x$ defining the action.
In the section on duality, this has interesting effects
related to the failure of the Nullstellensatz over fields
which are not algebraically closed.

We round out these considerations by describing the variety
of $\cA$-actions on $\cB(2)$ in Section~\ref{sec:Baction},
showing that there are $32$ such actions, of which $28$ 
lift to $\cA$-actions on $\cA(2)$.

Since the symmetric actions are actions, there is an inclusion map of varieties.
This can be factored through
an intermediate variety which consists of those $\cA$-actions
which preserve  the submodule $\Img{Q_2}$.   
While symmetric actions induce the same $\cA$-action
on $\Img{Q_2}$ and on $\cB(2)$, these intermediate
actions induce possibly distinct $\cA$-module
structures on $\Img{Q_2}$ and $\cB(2)$.   Recall that, as $\cA(2)$-modules,
these are isomorphic.

Let us write $V_{\text{gen}}$,
$V_{\text{sym}}$,
$V_{\cB}$, and
$V_{\cQ}$ for the varieties of
\begin{itemize}
\item general $\cA$-module structures on $\cA(2)$,
\item symmetric $\cA$-module structures on $\cA(2)$,
\item $\cA$-module structures on $\cB(2)$, and
\item $\cA$-module structures on $\cA(2)$ that preserve the short exact 
sequence~\eqref{Qext},
\end{itemize}
respectively.   Then we have a diagram
\[
\tag{$\mathcal{D}$}
\label{diagram:D}
\xymatrix{
&
V_{\cB}
&
\\
V_{\text{sym}} 
\ar[r]
&
V_{\cQ}
\ar[r]
\ar^{s}[u]
\ar_{q}[d]
&
V_{\text{gen}}
\\
&
V_{\cB}
\\
}
\]
where $sA$ and $qA$ are the $\cA$-modules $\Img{Q_2}$ and
$A/\Img{Q_2}$ induced by an $\cA$-module $A$ which lies in $V_\cQ$.

In Section~\ref{sec:relations} we describe each of these maps.
In particular, in Theorem~\ref{thm:VQ} we note that $V_\cQ$ is simply the
intersection of $V_{\text{gen}}$ with a hyperplane $a_{60} = 0$.

Next, we consider how these $\cA$-module structures behave
under duality.  We recall some standard definitions and facts.

\begin{definition}
If $M$ is an $\cA$-module, let $DM = \Hom_\cA(M,\F_2)$ with
$\cA$ action $a \cdot f = f \circ L_{\chi(a)}$, where $\chi$ is
the antipode and $L$ is left translation, $L_{y}(x)
= yx$.  
We define $DM$ similarly for $\cA(2)$-modules, or modules over any
other Hopf algebra (with antipode).
If $V$ is an $\F_2$-vector space (such as $M_k$, the degree $k$ part of
an $\cA$-module $M$) let $V^* = \Hom_{\F_2}(V,\F_2)$.
\end{definition}

\begin{proposition}
\label{prop:gendual}
Let $H$ be a connected finite dimensional Hopf  algebra of formal
dimension $N$ with socle $H_N = \langle s \rangle$, and let
$s^* : H_N \overset{\cong}{\longto} \Sigma^N \F_2$ be the linear dual of $s$.
There is an isomorphism of left $H$-modules $\theta : H \to \Sigma^N DH$
given by $\theta(x) = s^* \circ L_{\chi(x)}$.
\end{proposition}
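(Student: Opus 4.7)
The plan is to verify, in order, that $\theta$ is a well-defined degree-$0$ map of graded vector spaces, that it is $H$-linear, and that it is bijective. First I would extend $s^*$ to a functional on all of $H$ by declaring it to vanish off $H_N$.

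For the grading, since $\chi$ preserves degree, $\theta(x)(y) = s^*(\chi(x) y)$ is nonzero only when $|x| + |y| = N$. Thus $\theta(x)$ is supported on $H_{N - |x|}$, corresponding to degree $|x|$ in $\Sigma^N DH$. For $H$-linearity, a direct calculation using that $\chi$ is an anti-homomorphism and the definition of the twisted action on $DH$ yields
\[
\theta(ax)(y) = s^*(\chi(ax)\, y) = s^*(\chi(x)\chi(a)\, y) = \theta(x)(\chi(a) y) = (a \cdot \theta(x))(y).
\]

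Since $\dim_{\F_2} H = \dim_{\F_2} \Sigma^N DH$, it suffices to show $\theta$ is injective. If $\theta(x) = 0$, then $s^*(\chi(x) y) = 0$ for all $y$. Because $\chi$ is bijective on a connected finite-dimensional Hopf algebra, setting $z = \chi(x)$, injectivity reduces to the claim that every nonzero $z \in H$ admits some $y$ with $zy = s$.

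This last claim is the main obstacle, and is essentially the Poincar\'e duality (Frobenius) property of $H$. I would prove it by choosing $w = zy$ of maximal degree in the right ideal $zH$ among nonzero products. For any $a \in \bar H$, the element $wa = z(ya)$ lies in $zH$ with $|wa| > |w|$, so by maximality $wa = 0$. Hence $w \bar H = 0$, placing $w$ in the right socle of $H$, which by hypothesis equals $\langle s \rangle$. Therefore $w = s$ and $s \in zH$, as desired. This final step unpacks the classical Larson--Sweedler fact that a finite-dimensional Hopf algebra is Frobenius, here specialized to the connected graded setting with explicit socle generator.
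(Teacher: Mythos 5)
Your proof is correct, but it takes a more hands-on route than the paper's. The paper exploits the freeness of $H$ as a module over itself: a left $H$-map $H \to \Sigma^N DH$ is determined by the image of $1$, which must be $s^*$, and then $\theta(x) = x\cdot s^* = s^*\circ L_{\chi(x)}$ is automatic --- so $H$-linearity comes for free rather than by your direct computation with the anti-homomorphism property of $\chi$. For bijectivity the paper also argues via injectivity plus a dimension count, but it applies the standard fact (cited to Margolis) that the socle is contained in every non-zero submodule to the left ideal $\Ker\theta$, observing that $\theta(s) = 1^* \neq 0$. You instead prove the underlying Frobenius/Poincar\'e-duality statement from scratch: for each nonzero $z$ the right ideal $zH$ contains $s$, via your maximal-degree argument. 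That argument is exactly the proof of the cited socle fact in the connected graded setting, so your write-up is more self-contained at the cost of being longer; the paper's version makes clearer that the formula for $\theta$ is forced. Two small points to make explicit if you polish this: since $\theta$ is a degree-zero map of graded vector spaces, you should reduce injectivity to homogeneous $x$ (so that $z = \chi(x)$ and the maximal-degree element $w \in zH$ are homogeneous and the degree comparison makes sense), and you are implicitly using that the right socle, like the left, is $H_N = \langle s\rangle$, which is harmless here since $H_N\bar H = \bar H H_N = 0$ for degree reasons and the hypothesis pins the socle down to $H_N$.
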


\begin{corollary}
\label{cor:ABdual}
There is an isomorphism of $\cA(2)$-modules
$\theta: \cA(2) \longto \Sigma^{23}D\cA(2)$.
There is an isomorphism of $\cB(2)$-, and hence, of $\cA(2)$-modules
$\theta: \cB(2) \longto \Sigma^{16}D\cB(2)$.
\end{corollary}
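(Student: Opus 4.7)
The plan is to apply Proposition \ref{prop:gendual} twice, once to the Hopf algebra $\cA(2)$ itself, and once to its quotient $\cB(2)$, after checking the hypotheses in each case.

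For the first isomorphism, I would recall that $\cA(2)$ is a connected finite-dimensional sub-Hopf-algebra of $\cA$ of dimension $64$, with top class in degree $23$ (read off from the Milnor basis $\xi_1^{r_1}\xi_2^{r_2}\xi_3^{r_3}$, $r_1<8,\ r_2<4,\ r_3<2$, with top degree $7+9+7=23$). The socle $\cA(2)_{23}$ is one-dimensional, spanned by this top class; this is the classical Poincar\'e duality property for finite sub-Hopf-algebras of the Steenrod algebra (Milnor--Moore). Proposition \ref{prop:gendual} with $H = \cA(2)$ and $N = 23$ then delivers $\theta\:\cA(2)\to\Sigma^{23}D\cA(2)$ directly.

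For the second isomorphism, I would first verify that $\cB(2) = \cA(2)\modmod E[Q_2]$ is itself a connected finite-dimensional Hopf algebra. Since $Q_2$ is central in $\cA(2)$, the identity $\cA(2)Q_2 = Q_2\cA(2)$ recorded in the introduction exhibits $E[Q_2]$ as a normal sub-Hopf-algebra of $\cA(2)$, so $\cB(2)$ inherits a Hopf algebra structure, of dimension $64/2 = 32$ and formal dimension $23 - 7 = 16$. Its socle is again one-dimensional by Poincar\'e duality. Applying Proposition \ref{prop:gendual} with $H = \cB(2)$ and $N = 16$ yields an isomorphism $\theta\:\cB(2)\to\Sigma^{16}D\cB(2)$ of $\cB(2)$-modules.

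The passage from a $\cB(2)$-module isomorphism to an $\cA(2)$-module isomorphism is automatic by restriction of scalars along the Hopf algebra surjection $\pi\:\cA(2)\to\cB(2)$; the duality functor $D$ commutes with this restriction because $\pi$ intertwines the antipodes of $\cA(2)$ and $\cB(2)$. There is no substantive obstacle here: the only delicate point is the verification that $E[Q_2]$ is normal in $\cA(2)$, and that is immediate from the centrality of $Q_2$ already invoked.
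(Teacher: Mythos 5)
Your proposal is correct and is essentially the argument the paper intends: the corollary is an immediate application of Proposition~\ref{prop:gendual} to the connected finite Hopf algebras $\cA(2)$ (formal dimension $23$) and $\cB(2)$ (formal dimension $16$), the latter being a quotient Hopf algebra since $E[Q_2]$ is normal by centrality of the primitive $Q_2$. Your final restriction-of-scalars remark is exactly the point the paper records at the end of its proof of Proposition~\ref{prop:Bdual}.
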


We can use this duality to transport an $\cA$-module structure on
$\cA(2)$ to the dual $\cA$ action on $D\cA(2)$, and then along $\theta$
to get a dual $\cA$-module structure on $\cA(2)$.   The same can be 
applied to $\cB$.  These induce maps on the varieties $V_{\text{gen}}$,
$V_{\text{sym}}$, $V_{\cQ}$, and $V_{\cB}$ 
which we determine in Section~\ref{sec:duality}.

\begin{remark}
The isomorphisms in Corollary~\ref{cor:ABdual}
have a remarkably simple form with respect to the Milnor basis
as shown in Theorems~\ref{prop:Adual} and \ref{prop:Bdual}.
This simple form suggests a general result, Conjecture~\ref{conj:dual}.
\end{remark}

Finally, in Section~\ref{sec:literature}
we identify the $\cA$-module structures that have appeared
in the literature and determine their duals.

The author wishes to thank
Prasit Bhattacharya, Marcel B\"okstedt,
and John Rognes for useful conversations while working
out these results.

\section{The symmetric case}
\label{sec:symmetric}

We can describe the variety of symmetric or general $\cA$-actions on $\cA(2)$
as follows.
First, note that $\cA(2)$ is
concentrated in degrees 0 to 23, so that all $Sq^{2^i}$ with
$2^i > 23$ must act trivially.  
Since the $Sq^{2^i}$ generate $\cA$, and since the  $Sq^1$, $Sq^2$
and  $Sq^4$ actions are already determined by the action of $\cA(2)$ on itself,
it suffices to describe $Sq^8$ and $Sq^{16}$.  
Each of these is determined by a sequence of linear transformations
$A_i \longto A_{i+8}$ or $A_{i+16}$.   We introduce indeterminates
for the coordinates of these linear transformations.   
This requires $119$ variables $a_1$, \ldots, $d_{24}$ in the symmetric case, and $150$
variables $a_1$, \ldots, $b_{26}$ in the general case.

Certain Adem relations allow us to determine $Sq^9$ through $Sq^{23}$ 
in terms of these indeterminates.   We then compute all Adem relations,
resulting in a set of relations which determine the subvariety 
of $\F_2^{119}$ (resp., $\F_2^{150}$) consisting of the symmetric
(resp, all) $\cA$-module actions on $\cA(2)$.  The relations are initially of
degree $\leq 2$, since the Adem relations are, but we can dramatically
reduce the number of variables required at the cost of introducing 
higher degree relations.

We will use the Milnor basis notation throughout:   
$Sq{(r_1,\ldots,r_k)}$ is
the dual of $\xi_1^{r_1}\cdots\xi_k^{r_k}$.

\clearpage

\begin{theorem}\leavevmode
\label{thm:symmetric}
\begin{enumerate}
\item
The variety of symmetric $\cA$-module actions on $\cA(2)$ is defined by
\[
\F_2[a_1, a_2, a_{13}, a_{23}, b_1]/I \otimes 
  \F_2[c_1,d_1,d_2,d_3]
\]
where 
\[
    I = (a_{1} a_{2} + a_{1} a_{13} + a_{2} a_{13} a_{23} + 
    a_{2} a_{13} b_{1} + a_{2} a_{13} + a_{2} a_{23} +
    a_{2} b_{1} + a_{2} + b_{1}).
\]
The first factor defines the $Sq^8$ action and has $16$ $\F_2$ points.
The second factor defines the $Sq^{16}$ action and also has $16$ $\F_2$ points.
This gives $256$ ways to define a symmetric $\cA$-module structure on $\cA(2)$.
\item
The $Sq^8$ actions are described in Appendices~\ref{sq8}, \ref{allsymSq8} and
\ref{symrelations}.  The coordinates $a_1$, \ldots, $b_1$ determine, and are
determined by, the following:
\begin{itemize}
\item
$ Sq^8 \cdot 1  = a_1 Sq{(5,1)} + a_2 Sq{(2,2)} + b_1 Sq{(1,0,1)},$
\item
$a_{13}$ is the coefficient of $Sq{(6,2)}$ in $Sq^8 \cdot Sq^4$, and
\item
$a_{23}$ is the coefficient of $Sq{(5,3)}$ in $Sq^8 \cdot Sq{(0,2)}$.
\end{itemize}
\item
The $Sq^{16}$ actions are described in Appendices~\ref{sq16} and
\ref{symrelations}.  
The coordinates $c_1$, $d_1$, $d_2$ and $d_3$ determine, and are
determined by
\[
Sq^{16} \cdot 1 = c_1 Sq(7,3) + d_1 Sq(6,1,1) 
                + d_2 Sq(3,2,1) + d_3 Sq(0,3,1).
\]
\end{enumerate}
\end{theorem}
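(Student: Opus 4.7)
The plan is to set up the variety computationally, exploit the symmetric hypothesis to cut down free parameters, then identify the resulting ideal via Adem relations and simplify.

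First, I would parameterize the possible actions. Since $\cA(2)$ is concentrated in degrees $0$ through $23$, any $Sq^{2^i}$ with $2^i \geq 32$ is forced to act by zero, and $Sq^1, Sq^2, Sq^4$ act as specified by the internal $\cA(2)$ multiplication. Hence a symmetric $\cA$-action is determined by the $Sq^8$ and $Sq^{16}$ actions. For each of these I would introduce one indeterminate in $\F_2$ per matrix entry of the corresponding degree-shifting linear map $\cA(2)_k \to \cA(2)_{k+8}$ or $\cA(2)_{k+16}$, written in the Milnor basis. The symmetry condition (Definition~\ref{defsym}(2)) forces the coefficient of $yQ_2$ in $Sq^{2^i} \cdot xQ_2$ to equal the coefficient of $y$ in $Sq^{2^i}\cdot x$, so roughly half the naive coordinates collapse; this is what yields the $119$ variables claimed (as opposed to $150$ in the general case of Section~\ref{sec:general}).

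Next, I would impose the Adem relations. These are relations $Sq^a Sq^b = \sum c_{a,b,i}Sq^{a+b-i}Sq^i$ applied to each basis element of $\cA(2)$, which produce polynomial relations of degree $\leq 2$ in the indeterminates. The crucial relations constraining $Sq^8$ are those expressing $Sq^8\cdot Sq^m$ (for small $m$) and $Sq^9,\dots,Sq^{15}$ (which are determined by $Sq^8$ and lower via Adem), and then checking compatibility on each basis element. For $Sq^{16}$ a parallel set of Adem-driven constraints appears. Since we are computing over $\F_2$ and want only $\F_2$-points, I would augment the ideal by $x^2 - x$ for every variable, as mentioned in the introduction.

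The main obstacle is computational: managing the Gr\"obner-basis reduction of an ideal in over one hundred variables down to one in nine variables, and then showing that the resulting ring factors as a tensor product. I would carry this out with computer algebra, proceeding variable by variable to eliminate those $a_j, b_j, c_j, d_j$ that are forced to be polynomials in the five $Sq^8$-parameters $a_1, a_2, a_{13}, a_{23}, b_1$ and the four $Sq^{16}$-parameters $c_1, d_1, d_2, d_3$ listed in the statement. The key conceptual points to verify along the way are (i) that no Adem relation couples an $a$ or $b$ with a $c$ or $d$, which gives the tensor decomposition, and (ii) that after elimination the only surviving relation among the nine parameters is the single polynomial $I$ in the $Sq^8$ coordinates. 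The geometric interpretation is B\"okstedt's observation flagged before the theorem: the $Sq^{16}$ action is entirely free once $Sq^8$ is chosen.

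Finally I would verify the count by direct enumeration over $\F_2^5$: substituting the $16$ candidate tuples $(a_1,a_2,a_{13},a_{23},b_1) \in \F_2^5$ satisfying $a_1^2 = a_1$, etc.\ into the generator of $I$ and checking which ones vanish. Combined with the $2^4 = 16$ free choices of $(c_1,d_1,d_2,d_3)$, this produces the advertised $256$. The explicit identification of $a_1, a_2, b_1$ as coefficients in $Sq^8\cdot 1$, and of $a_{13}, a_{23}$ as specific structure constants in $Sq^8\cdot Sq^4$ and $Sq^8\cdot Sq(0,2)$, would be read off directly from the chosen change of variables, with the corresponding full tables deferred to the appendices.
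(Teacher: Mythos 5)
Your proposal follows essentially the same route as the paper: parameterize $Sq^8$ and $Sq^{16}$ by matrix entries in the Milnor basis, use the block structure forced by symmetry to cut from $150$ to $119$ indeterminates, generate the ideal from the Adem relations augmented by $x^2-x$, eliminate variables, and count $\F_2$ points by enumeration. One correction to your item (i): it is \emph{not} true that no Adem relation couples the $Sq^8$ parameters with the $Sq^{16}$ parameters — the paper's second-step relations (e.g.\ $r_1$, $r_5$, $r_8$, $r_9$, $r_{12}$ in Subsection~\ref{sub:thirdstep}) all mix the $a_i,b_i$ with the $c_1,d_i$. The tensor decomposition is not visible at the level of the raw relations; it emerges only after the coupled relations are used to eliminate the dependent variables $d_{13},d_{14},d_{21}$ (solving for them in terms of $c_1,d_1,d_2,d_3$ and the $Sq^8$ coordinates), at which point the single surviving relation happens to involve only $a_1,a_2,a_{13},a_{23},b_1$. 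Your computation would discover this, but the verification target as you stated it would fail.
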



The variety of $Sq^8$ actions is in fact an affine space.
I owe this observation to Marcel B\"okstedt.
Let $V_2(R) = \Hom_{\text{Ring}}(R,\F_2)$ denote the variety of $\F_2$ points.

\begin{corollary}
\label{affine}
The space of $Sq^8$ actions  in Theorem~\ref{thm:symmetric}
is $\F_2^4$, as shown by the inverse isomorphisms
\[
F : V_2(\F_2[a_1,a_2,a_{13},z]) \longto V_2(\F_2[a_1, a_2, a_{13},a_{23}, b_1]/I)
\]
and
\[
G : V_2(\F_2[a_1, a_2, a_{13},a_{23}, b_1]/I) \longto V_2(\F_2[a_1,a_2,a_{13},z])
\]
given by 
\[
G(a_1,a_2,a_{13},a_{23},b_1) = (a_1, a_2, a_{13}, a_{23}+b_1)
\]
and
\[
F(a_1, a_2, a_{13},z) = (a_1,a_2,a_{13},z+f, f),
\]
where $f = f(a_1,a_2,a_{13},z) = z(a_2 + a_2a_{13}) + a_2 + a_1a_2
+ a_1 a_{13} + a_2 a_{13}$.
\end{corollary}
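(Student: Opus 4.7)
The plan is to verify by direct polynomial manipulation that $F$ and $G$ are mutually inverse morphisms on $\F_2$-points. Since the domain and codomain are defined as $\F_2$-points, every calculation takes place in $\F_2[a_1,a_2,a_{13},a_{23},b_1]$ (respectively $\F_2[a_1,a_2,a_{13},z]$) after imposing $x^2 = x$ on each coordinate, per the convention spelled out in the introduction. Writing $p$ for the single generator of $I$, the entire content of the corollary is three algebraic identities.

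First I would check that $F$ actually lands in $V_2(\F_2[a_1, a_2, a_{13}, a_{23}, b_1]/I)$: substitute $a_{23}\mapsto z+f$ and $b_1\mapsto f$ into $p$ and verify, after expanding and using $f^2=f$, that the result is $0$. The definition of $f = z(a_2 + a_2 a_{13}) + a_2 + a_1a_2 + a_1a_{13} + a_2a_{13}$ is exactly engineered for this cancellation, so the check is routine. Second, the composite $G\circ F$ is immediate:
\[
G(F(a_1,a_2,a_{13},z)) = G(a_1,a_2,a_{13},z+f,f) = (a_1,a_2,a_{13},(z+f)+f) = (a_1,a_2,a_{13},z),
\]
since $2f = 0$ in characteristic $2$.

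Third, for $F\circ G$, applying $F$ to $G(a_1,a_2,a_{13},a_{23},b_1) = (a_1,a_2,a_{13},a_{23}+b_1)$ yields $(a_1,a_2,a_{13},(a_{23}+b_1)+f',f')$ with $f' = f(a_1,a_2,a_{13},a_{23}+b_1)$. Agreement on the first three coordinates is automatic, so the whole claim reduces to showing $f'=b_1$ whenever $p = 0$; equivalently, the polynomial identity $f' + b_1 = p$ in $\F_2[a_1,a_2,a_{13},a_{23},b_1]/(x^2 - x)$.

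The main obstacle, and the real substance of B\"okstedt's observation, is precisely this last identity. It is a straightforward expansion: substituting $a_{23}+b_1$ for $z$ in the formula for $f$ gives
\[
f' = a_2 a_{23} + a_2 a_{13} a_{23} + a_2 b_1 + a_2 a_{13} b_1 + a_2 + a_1 a_2 + a_1 a_{13} + a_2 a_{13},
\]
and adding $b_1$ reproduces the nine monomials of $p$ exactly. I expect no surprises beyond bookkeeping; the key insight that makes the whole corollary work is the choice of $f$, which is forced once one notices that $p$ is linear in $b_1$ with coefficient $1 + a_2 + a_2 a_{13}$ and that this coefficient equals $1$ modulo $x^2-x$ in a way that allows one to solve for $b_1$ polynomially in $(a_1,a_2,a_{13},a_{23}+b_1)$.
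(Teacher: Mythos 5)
Your proof is correct and follows essentially the same route as the paper's: $GF = \mathrm{Id}$ is immediate, $FG = \mathrm{Id}$ reduces to the identity $f(a_1,a_2,a_{13},a_{23}+b_1) + b_1 = p$ (which indeed holds monomial-by-monomial, even before imposing $x^2 = x$), and $F$ lands in $V_2(\F_2[a_1,a_2,a_{13},a_{23},b_1]/I)$ by the direct substitution you describe. One quibble with your closing heuristic only: the coefficient of $b_1$ in $p$ is $1 + a_2 + a_2 a_{13}$, which vanishes at $(a_2,a_{13}) = (1,0)$, so it does not ``equal $1$'' and one cannot in general solve for $b_1$ by inverting it --- but this remark is purely motivational and the verification itself never uses it.
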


\begin{proof}[Proof of Corollary~\ref{affine}]
Clearly $GF = Id$.  The composite
\[
FG(a_1,a_2,a_{13},a_{23},b_1) =
(a_1, a_2,a_{13}, a_{23}+b_1 + f,f)
\]
is the identity since $f \equiv b_1 \mod{I}$.   Finally,
$F$ does map to the variety where $I = 0$ since the relations
$a_{23} = z+f$ and $b_1=f$ make the relation
\[
    0 = a_{1} a_{2} + a_{1} a_{13} + a_{2} a_{13} a_{23} + 
    a_{2} a_{13} b_{1} + a_{2} a_{13} + a_{2} a_{23} +
    a_{2} b_{1} + a_{2} + b_{1}
\]
true.
\end{proof}

\begin{proof}[Proof of Theorem~\ref{thm:symmetric}]

Let $A = \cA(2)$ and let $A_n$ be its degree $n$ component.
We use the Milnor basis 
\[
\left\{ Sq(r_1,r_2,r_3) \mid 0 \leq r_1 \leq 7,\,\, 0 \leq r_2 \leq 3,\,\,
0 \leq r_3 \leq 1\right\}
\]
as our basis for $A$.  
Let $B_n$ be the sub vector space of $A_n$ spanned by the 
$\left\{ Sq(R) \mid r_3 = 0\right\}$.  Then
\begin{equation}
\label{split}
\tag{$\mathcal{V}$}
A_n = B_n \oplus B_{n-7} Q_2
\end{equation}
as $\F_2$-vector spaces.
Note that $Q_2$ is central in $\cA(2)$, and that, in fact
\[
Q_2 Sq(r_1,r_2,r_3) =
Sq(r_1,r_2,r_3) Q_2  =
\left\{
\begin{array}{lr}
Sq(r_1,r_2,1) & r_3=0{\phantom{.}}\\
0 & r_3=1. \\
\end{array}
\right.
\]

The vector space splitting~\eqref{split} is well related to the short exact 
sequence~\eqref{Zext}, in that the basis
for $B_n$
given by
$\left\{ Sq(R) \mid r_3 = 0\right\}$ 
passes to a basis for the degree $n$ component of 
$\cB(2) = \cA(2)\modmod E[Q_2]$, while
the basis
$\left\{ Sq(R) \mid r_3 = 1\right\}$ for
$B_{n-7}Q_2$ goes to $0$ there and forms a basis for $\Img{Q_2}$
in degree $n$.

The action of $Sq^i$ on $A$, for $0 \leq i \leq 7$, is given by
the $\cA(2)$-action. These are stored as matrices with entries in
$\F_2$, with the matrix $Sq(i,n)$ giving the left action of $Sq^i$ on $A_n$.
(In reading the MAGMA code in the appendices, bear in mind that MAGMA
works with right actions, so that $Sq^a Sq^b : A_n \longto A_{n+a+b}$
will be written {\tt Sq(b,n) * Sq(a,n+b)}.)

An action of $Sq^8$ is a series of
linear transformations
\[
A_n = B_n \oplus B_{n-7} Q_2 \longto
A_{n+8} = B_{n+8} \oplus B_{n+1} Q_2.
\]
By our assumptions \ref{defsym}.(1) and \ref{defsym}.(2) 
in the definition of a symmetric $\cA$-action,
\[
Sq^8(Sq(R)Q_2)) = (Sq^8Sq(R))Q_2,
\]
so that these linear transformations  have the block form
\[
\left(
\begin{array}{ll}
M_n & N_n \\
0 & M_{n-7} \\
\end{array}
\right).
\]
Note, in particular, that $(Sq^8Sq(R))$ here denotes the (exotic)
action of $Sq^8$ on $\cB(2)$, {\em not} the usual product in $\cA$.
The linear transformations $M_n : B_n \longto B_{n+8}$ for
$0 \leq n \leq 15$ require 28 entries, which we initially
represent by indeterminates $a_1, \ldots, a_{28}$.   Similarly,
the $N_n : B_n \longto B_{n+1}$ require another 66 entries,
which we initially set equal to indeterminates $b_1, \ldots, b_{66}$.
See Appendix~\ref{sq8}.

Entirely analogously, a $Sq^{16}$ action
is a series of linear transformations
\[
A_n = B_n \oplus B_{n-7} Q_2 \longto
A_{n+16} = B_{n+16} \oplus B_{n+9} Q_2.
\]
Again,
these have the block form
\[
\left(
\begin{array}{ll}
K_n & L_n \\
0 & K_{n-7} \\
\end{array}
\right)
\]
There is only one linear transformation $K_n : B_n \longto B_{n+16}$, for
$n=0$, which is initially equal to the indeterminate $c_1$.
Similarly,
the $L_n : B_n \longto B_{n+9}$ require another 24 entries,
which are initially equal to indeterminates $d_1, \ldots, d_{24}$.
See Appendix~\ref{sq16}.

We therefore work over the polynomial ring
\[
R = \F_2[a_1,\ldots,a_{28},b_1,\ldots,b_{66},c_1,
d_1, \ldots, d_{24}].
\]

\subsection{First step}

We initially define only the $Sq^i$, $0\leq i \leq 7$ and
$Sq^8$.   We then use the Adem relations for
$Sq^1 Sq^{2n}$, $Sq^2 Sq^{4n}$, and $Sq^4 Sq^{8n}$ to
compute the matrices with entries in $R$ which describe
the resulting $Sq^i$, for $0 \leq i \leq 15$.

We then compute all the Adem relations for $Sq^a Sq^b$,
$a < 2b$, with $1 \leq a,b \leq 15$ for which the right
hand side is available (i.e., does not contain a $Sq^i$
with $i > 15$).   From this a set of 496
relations, of which 452 are linear,
is obtained.   Rather than process these
directly, we extract those which are of degree 1 and
compute a Gr\"obner basis for the ideal {\tt Rel1}
which they define. (This is just row reduction of linear equations.)
This ideal has $81$ generators, and allows us
to rewrite 81 of the first 94 variables in terms of
the other 13.

We then replace the entries in the matrices for $Sq^i$,
$i \leq 15$, by their normal forms with respect to the
ideal {\tt Rel1}, thereby reducing the number of variables 
involved from 94 to 13, the twelve $a_i$ and $b_i$ listed below
together with $b_{51}$, which will be eliminated in the next step.

\subsection{Second step}

We next define $Sq^{16}$ in terms of the 25 variables
$c_1, d_1, \ldots, d_{24}$ as above (see Appendix~\ref{sq16}),
and again use the Adem relations for
$Sq^1 Sq^{2n}$, $Sq^2 Sq^{4n}$, and $Sq^4 Sq^{8n}$ to
compute the matrices with entries in $R$ which describe
the resulting $Sq^i$, for $0 \leq i \leq 23$.

We then compute all Adem relations which can act nontrivially
on $\cA(2)$.  The resulting 95 relations are used to generate an
ideal {\tt NewRel}.   Note that it involves $c_1$, the $d_i$
and 13 of the
$a_i$ and $b_i$.  Again, we extract the relations of degree 1
(there are 50 of them)
and compute a Gr\"obner basis for the ideal {\tt NewRel1} which
they generate. It has 19 generators, which allows us to rewrite
$b_{51}$ together with 
18 of the $c_1$, $d_1$, \ldots, $d_{24}$ in terms of the remaining seven.

We have then reduced the number of variables to $12 + 7 = 19$,
namely
\[
a_1,a_2,a_{13},a_{14},a_{23},b_1,b_9,b_{10},b_{25},b_{26},b_{27},b_{52},
   c_1,d_1,d_2,d_3,d_{13},d_{14},d_{21}.
\]

We then compute the normal forms of the 95 relations generating
the full ideal {\tt NewRel} of relations with respect to the ideal
generated by the linear relations.   This leaves us with
19 relations among the 19 variables listed above.  These relations
are considered in the next section.

\subsection{Third step}
\label{sub:thirdstep}

We can now simplify the problem by working in the ring 
\[
S = \F_2[
a_1,a_2,a_{13},a_{14},a_{23},b_1,b_9,b_{10},b_{25},b_{26},b_{27},b_{52},
   c_1,d_1,d_2,d_3,d_{13},d_{14},d_{21}]
\]
generated by the 19 variables above with the following relations
(those which were found at the end of the last section).

\begin{longtable}{>{$}l<{$} >{$}l<{$}}
  &    d_{14} + d_{13} + d_{3} + d_{2} + d_{1} + c_{1} + b_{1} a_{13} + b_{1} + a_{13} a_{1} + a_{2} + a_{1} + 1, \\
  &    b_{27} a_{13} + b_{27} + b_{26} + b_{25} a_{13} + b_{25} + b_{10} + b_{9} a_{13} + b_{9} + b_{1} a_{13} + a_{23} a_{13} + a_{13}^2 + a_{13} a_{2} + a_{13} a_{1}, \\
  &    b_{27} a_{2} + b_{10} + b_{1} + 1, \\
  &    b_{52} + b_{27} + b_{26} + b_{9} + b_{1} a_{2} + b_{1} + 1, \\
  &    d_{13} + d_{2} + c_{1} + b_{27} a_{2} + b_{27} + b_{25} a_{2} + b_{25} + b_{9} a_{2} + b_{9} + b_{1} a_{13} + b_{1} a_{2} + a_{23} a_{2} + a_{23} + a_{13} a_{2} + a_{13} a_{1} + \\
  & \phantom{xxxxx} a_{13} + a_{2}^2 + a_{2} a_{1} + a_{2} + a_{1} + 1, \\
  &    b_{26} a_{2} + b_{25} a_{2} + b_{1} a_{2} + b_{1} + a_{23} a_{2} + a_{13} a_{1} + a_{2}, \\
  &    b_{27} a_{2} + b_{9} + b_{1} a_{2} + b_{1} + a_{14} + a_{2} + a_{1} + 1, \\
  &    d_{14} + d_{3} + d_{1} + b_{27} a_{2} + b_{27} + b_{25} a_{2} + b_{25} + b_{9} a_{2} + b_{9} + b_{1} a_{2} + b_{1} + a_{23} a_{2} + a_{23} + a_{13} a_{2} + a_{13} + a_{2}^2 + a_{2} a_{1}, \\
  &    d_{13} + d_{2} + c_{1} + b_{27} a_{2} + b_{27} + b_{25} a_{2} + b_{25} + b_{9} a_{2} + b_{9} + b_{1} a_{13} + b_{1} a_{2} + a_{23} + a_{14} + a_{13} a_{2} + a_{2}^2 + a_{2} a_{1} + a_{2} + 1, \\
  &    b_{26} + b_{25} + b_{10} + b_{9} a_{13} + a_{23} + a_{14} + a_{13} + 1, \\
  &    b_{27} a_{2} + b_{9} + b_{1} a_{2} + b_{1} + a_{23} a_{2} + a_{13} a_{1} + a_{13} + a_{2} + 1, \\
  &    d_{21} + d_{14} + d_{13} + d_{3} + d_{2} + b_{10} + b_{9} + b_{1} a_{13} + a_{14}, \\
  &    b_{27} a_{13} + b_{27} + b_{25} a_{13} + b_{9} + b_{1} a_{13} + a_{23} a_{13} + a_{23} + a_{14} + a_{13}^2 + a_{13} a_{2} + a_{13} a_{1} + a_{13} + 1, \\
  &    b_{52} + b_{27} + b_{25} + b_{9} a_{13} + b_{1} + a_{23} + a_{13} + a_{2} + a_{1}, \\
  &    b_{10} + b_{9} + b_{1} a_{2} + a_{14} + a_{2} + a_{1}, \\
  &    b_{52} + b_{26} a_{13} + b_{26} + b_{25} + b_{9} + b_{1} + a_{23} + a_{13} + a_{1}, \\
  &    b_{26} a_{2} + b_{25} a_{2} + b_{10} + b_{9} + b_{1} + a_{13}, \\
  &    a_{23} a_{2} + a_{14} + a_{13} a_{1} + a_{13} + a_{1}, \\
  &    b_{27} a_{2} + b_{26} a_{2} + b_{25} a_{2} + b_{9} + a_{13} + 1. \\
\end{longtable}

Before using these relations, we can make one more useful simplification.    
Since the variables
will be taking only the values $0$ and $1$, we can reduce modulo
the ideal generated by the $x^2+x$.   This simplifies the relations
further by replacing 5 squares $x^2$ by first powers $x$, in three cases
cancelling the sum $x^2 + x$.

We will continue to reduce modulo this ideal in this next step, since the 
process of eliminating variables which we undertake will introduce
higher degree terms.

The simplified relations are the following:

\begin{align*}
r_{1}  & =    a_{1} a_{13} + a_{1} + a_{2} + a_{13} b_{1} + b_{1} + c_{1} + d_{1} + d_{2} + d_{3} + d_{13} + d_{14} + 1, \\
r_{2} & =    a_{1} a_{13} + a_{2} a_{13} + a_{13} a_{23} + a_{13} b_{1} + a_{13} b_{9} + a_{13} b_{25} + a_{13} b_{27} + a_{13} + b_{9} + b_{10} + b_{25} + b_{26} + b_{27}, \\
r_{3} & =    a_{2} b_{27} + b_{1} + b_{10} + 1, \\
r_{4} & =    a_{2} b_{1} + b_{1} + b_{9} + b_{26} + b_{27} + b_{52} + 1, \\
r_{5} & =    a_{1} a_{2} + a_{1} a_{13} + a_{1} + a_{2} a_{13} + a_{2} a_{23} + a_{2} b_{1} + a_{2} b_{9} + a_{2} b_{25} + a_{2} b_{27} + a_{13} b_{1} + a_{13} + a_{23} + \\
       & \phantom{xx}  b_{9} + b_{25} + b_{27} + c_{1} + d_{2} + d_{13} + 1, \\
r_{6} & =    a_{1} a_{13} + a_{2} a_{23} + a_{2} b_{1} + a_{2} b_{25} + a_{2} b_{26} + a_{2} + b_{1}, \\
r_{7} & =    a_{1} + a_{2} b_{1} + a_{2} b_{27} + a_{2} + a_{14} + b_{1} + b_{9} + 1, \\
r_{8} & =    a_{1} a_{2} + a_{2} a_{13} + a_{2} a_{23} + a_{2} b_{1} + a_{2} b_{9} + a_{2} b_{25} + a_{2} b_{27} + a_{2} + a_{13} + a_{23} + b_{1} + b_{9} + b_{25} + b_{27} + \\
       &  \phantom{xx}    d_{1} + d_{3} + d_{14}, \\
r_{9} & =    a_{1} a_{2} + a_{2} a_{13} + a_{2} b_{1} + a_{2} b_{9} + a_{2} b_{25} + a_{2} b_{27} + a_{13} b_{1} + a_{14} + a_{23} + b_{9} + b_{25} + b_{27} + c_{1} + \\
       &   \phantom{xx}    d_{2} + d_{13} + 1, \\
r_{10} & =    a_{13} b_{9} + a_{13} + a_{14} + a_{23} + b_{10} + b_{25} + b_{26} + 1, \\
r_{11} & =    a_{1} a_{13} + a_{2} a_{23} + a_{2} b_{1} + a_{2} b_{27} + a_{2} + a_{13} + b_{1} + b_{9} + 1, \\
r_{12} & =    a_{13} b_{1} + a_{14} + b_{9} + b_{10} + d_{2} + d_{3} + d_{13} + d_{14} + d_{21}, \\
r_{13} & =    a_{1} a_{13} + a_{2} a_{13} + a_{13} a_{23} + a_{13} b_{1} + a_{13} b_{25} + a_{13} b_{27} + a_{14} + a_{23} + b_{9} + b_{27} + 1, \\
r_{14} & =    a_{1} + a_{2} + a_{13} b_{9} + a_{13} + a_{23} + b_{1} + b_{25} + b_{27} + b_{52}, \\
r_{15} & =    a_{1} + a_{2} b_{1} + a_{2} + a_{14} + b_{9} + b_{10}, \\
r_{16} & =    a_{1} + a_{13} b_{26} + a_{13} + a_{23} + b_{1} + b_{9} + b_{25} + b_{26} + b_{52}, \\
r_{17} & =    a_{2} b_{25} + a_{2} b_{26} + a_{13} + b_{1} + b_{9} + b_{10}, \\
r_{18} & =    a_{1} a_{13} + a_{1} + a_{2} a_{23} + a_{13} + a_{14}, \\
r_{19} & =    a_{2} b_{25} + a_{2} b_{26} + a_{2} b_{27} + a_{13} + b_{9} + 1 \\
\end{align*}

The variable ordering has been reversed in the passage from $R$ to $S$,
changing the superficial appearance of these relations.

These relations can be used to further reduce the number of variables as follows.
We will consider the variables in reverse order, $d_{21}$, $d_{14}$, \ldots,
$a_1$.   If a variable under consideration
occurs in one of the $r_i$ as a term, but not as a factor
in a product, then the effect of setting $r_i$ to $0$ can be accomplished by
replacing the variable in question by its sum with $r_i$ in all the relations.
This will replace the relation $r_i$ by $0$, and will eliminate the variable in question from
all the relations.  Repeating this process for as long as possible will reduce both
the number of variables and the number of relations.

Note that, by proceeding through the variables in reverse order,
each expression replacing a variable
will contain only  previously considered
variables, later in the ordering of variables,
which are not going to be eliminated, and
earlier variables which have yet to be considered.
This ensures that the resulting smaller number of variables
and relations will still have exactly the same solution set.

To determine the possible reductions, we execute the MAGMA command
\begin{verbatim}
[<j, [<i,safe(S.j,rels[i]),rels[i]>
       : i in [1..#rels] | S.j in Terms(rels[i])]>
       : j in [1..#vars]];
\end{verbatim}
This displays, for each variable {\tt S.j}, those relations {\tt rels[i]}
in which {\tt S.j} appears just once as a term (this is checked by the
function {\tt safe}), together with the relation {\tt rels[i]}.   This allows
us to choose the simplest relation among those which could be used
to eliminate the variable {\tt S.j}.
See Appendix~\ref{codesym3}.

Precisely, we make the following reductions.
Relation $r_{12}$ allows us to replace $d_{21}$ by
\[
d_{21} \longto d_{21} + r_{12} =
   a_{13} b_{1} + a_{14} + b_{9} + b_{10} + d_{2} + d_{3} + d_{13} + d_{14}.
\]
This has little effect, since $d_{21}$ does not appear in any other relation.

Both relations $r_1$ and $r_8$ could be used to eliminate $d_{14}$.   We choose to use the shorter
relation $r_1$:
\[
d_{14} \longto d_{14} + r_{1} =
   a_{1} a_{13} + a_{1} + a_{2} + a_{13} b_{1} + b_{1} + c_{1} + d_{1} + d_{2} + d_{3} + d_{13}  + 1.
\]
This has the effect of replacing $r_1$ by $0$, and making $r_5 = r_8$.

The relation $r_9$ is now the shortest relation allowing us to eliminate
$d_{13}$.  This gives
\begin{align*}
d_{13} \longto \,\, & d_{13} + r_9 =  \\
   & a_{1} a_{2} + a_{2} a_{13} + a_{2} b_{1} + a_{2} b_{9} + a_{2} b_{25} + a_{2} b_{27} + a_{13} b_{1} + a_{14} + a_{23} + b_{9} + b_{25} + b_{27} + c_{1} + 
        d_{2} + 1. \\
\end{align*}

Note that this has also affected the values that $d_{21}$ and $d_{14}$ reduce to:
\begin{align*}
d_{21} \longto \,\, & 
a_{1} a_{13} + a_{1} + a_{2} + a_{14} + b_{1} + b_{9} + b_{10} + c_{1} + d_{1} + 1 \\
d_{14} \longto \,\, &
a_{1} a_{2} + a_{1} a_{13} + a_{1} + a_{2} a_{13} + a_{2} b_{1} + a_{2} b_{9} + a_{2} b_{25} + a_{2} b_{27} \\
& {\phantom{xxx}} + a_{2} + a_{14} + a_{23} + b_{1} + b_{9} + b_{25} + b_{27} + d_{1} + d_{3}
\end{align*}
Note also that the relations continue to change as these substitutions are made.   The reader
should run the MAGMA code, or carry out these substitutions by hand in order to follow the
calculation.  (The MAGMA code can be found in Appendix~\ref{codesym3} with its
output in Appendix~\ref{thirdstep}.)

The next variable which can be replaced is $b_{52}$.  The relation $r_4$ is
the shortest among those which could be used ($r_4$, $r_{14}$ and $r_{16})$.
We get
\[
b_{52} \longto \,\,  b_{52} + r_4 =  
    a_{2} b_{1} + b_{1} + b_{9} + b_{26} + b_{27} + 1
\]

Next, $b_{27}$ can be eliminated using $r_{16}$:
\[
b_{27} \longto \,\,  b_{27} + r_{16} =   a_{1} + a_{2} b_{1} + a_{13} b_{26} + a_{13} + a_{23} + b_{25} + 1
\]

Next, $b_{26}$ can be eliminated using $r_{10}$:
\[
b_{26} \longto \,\,  b_{26} + r_{10} =
a_{13} b_{9} + a_{13} + a_{14} + a_{23} + b_{10} + b_{25} +  1
\]

Next, $b_{25}$ can be eliminated using $r_{2}$, which is now equal to $r_{13}$:
\[
b_{25} \longto \,\,  b_{25} + r_{2} =
a_{1} + a_{2} a_{13} b_{1} + a_{2} a_{13} + a_{2} b_{1} + a_{13} b_{1} + a_{13} + a_{14} + b_{9}
\]

Next, $b_{10}$ can be eliminated using $r_{14}$, which is now equal to $r_{15}$:
\[
b_{10} \longto \,\,  b_{10} + r_{14} =
 a_{1} + a_{2} b_{1} + a_{2} + a_{14} + b_{9} 
\]

Next, $b_{9}$ can be eliminated using $r_{19}$:
\[
b_{9} \longto \,\,  b_{9} + r_{19} =
a_{1} a_{2} + a_{2} a_{13} a_{14} + a_{2} a_{13} a_{23} + a_{2} a_{13} + a_{2} a_{14} + a_{2} b_{1} + a_{2} + a_{13} + 1 
\]

Next, $a_{14}$ can be eliminated using $r_{5}$:
\[
a_{14} \longto \,\,  a_{14} + r_{5} =
 a_{1} a_{13} + a_{1} + a_{2} a_{23} + a_{13} 
\]

This leaves 9 independent variables, $a_1$, $a_2$, $a_{13}$, $a_{23}$,
$b_1$, $c_1$, $d_1$, $d_2$, and $d_3$.
The remaining variables are given in terms of these by
\begin{align*}
     a_{14} &= a_{1} a_{13} + a_{1} + a_{2} a_{23} + a_{13} ,\\
     b_{9} &= a_{1} a_{2} a_{13} + a_{2} a_{13} + a_{2} a_{23} + a_{2} b_{1} + a_{2} + a_{13} + 1 ,\\
     b_{10} &= a_{1} a_{2} a_{13} + a_{1} a_{13} + a_{2} a_{13} + 1 ,\\
     b_{25} &= a_{1} a_{2} a_{13} + a_{1} a_{13} + a_{2} a_{13} b_{1} + a_{2} + a_{13} b_{1} + a_{13} + 1 ,\\
     b_{26} &= a_{1} a_{2} a_{13} + a_{1} a_{13} + a_{1} + a_{2} a_{13} a_{23} + a_{2} a_{13} + a_{2} a_{23} +
           a_{2} + a_{13} b_{1} + a_{13} + a_{23} + 1 ,\\
     b_{27} &= a_{1} a_{13} + a_{1} + a_{2} a_{13} b_{1} + a_{2} b_{1} + a_{2} + a_{13} a_{23} + a_{23} ,\\
     b_{52} &= a_{2} a_{13} a_{23} + a_{2} a_{13} b_{1} + a_{2} b_{1} + a_{2} + a_{13} a_{23} + a_{13} b_{1} + b_{1} + 1 ,\\
     d_{13} &= a_{1} a_{13} + a_{2} a_{13} a_{23} + a_{2} a_{13} b_{1} + a_{2} a_{13} + a_{2} b_{1} +
            a_{13} a_{23} + a_{13} + c_{1} + d_{2} + 1 ,\\
     d_{14} &= a_{1} + a_{2} a_{13} a_{23} + a_{2} a_{13} b_{1} + a_{2} a_{13} + a_{2} b_{1} + a_{2} +
            a_{13} a_{23} + a_{13} b_{1} + a_{13} + b_{1} + d_{1} + d_{3} ,\\
     d_{21} &= a_{1} a_{13} + a_{2} b_{1} + b_{1} + c_{1} + d_{1} + 1.
\end{align*}
The full reduction of all $119$ variables to these 9 variables 
is in Appendix~\ref{symrelations}.

This process has eliminated ten relations and caused three more
to become equal to others, as noted above. The remaining six
have now become equal, leaving us with a single relation 
\[
    a_{1} a_{2} + a_{1} a_{13} + a_{2} a_{13} a_{23} + a_{2} a_{13}
    b_{1} + a_{2} a_{13} + a_{2} a_{23} + a_{2} b_{1} + a_{2} +
    b_{1} = 0
\]

The second and third statements in the theorem follow from the role of the remaining
nine variables in defining $Sq^8$ and $Sq^{16}$.

This completes the proof of Theorem~\ref{thm:symmetric}.
\end{proof}

\begin{remark}
The actions of $Sq^8$ and $Sq^{16}$
which are not given in the theorem can be determined by using the
initial generic form of the action in Appendix~\ref{sq8}, e.g.,
\[
Sq^8\cdot Sq{(5,1)} = a_{27} Sq{(7,3)} + b_{34} Sq{(6,1,1)}
                     + b_{35} Sq{(3,2,1)} + b_{36} Sq{(0,3,1)}
\]
and the relations in Appendix~\ref{symrelations} to write them in terms of the
final $9$ variables, e.g.,
\[
Sq^8\cdot Sq{(5,1)} = a_{13} Sq{(7,3)} +  Sq{(6,1,1)} +
                       Sq{(3,2,1)}.  \phantom{ + b_{36} Sq{(0,3,1)}}
\]
If you are running the MAGMA code, you can retrieve these quickly, once the final
version of the squaring operations has been computed, by the 
command {\tt Sq(8,8)} to get the full matrix for
$Sq^8 : A_8 \longto A_{16}$:
\begin{verbatim}

> A_bas(8);
[
    [ 5, 1, 0 ],
    [ 2, 2, 0 ],
    [ 1, 0, 1 ]
]
> A_bas(16);
[
    [ 7, 3, 0 ],
    [ 6, 1, 1 ],
    [ 3, 2, 1 ],
    [ 0, 3, 1 ]
]
> Sq(8,8);
[      a13         1         1         0]
[      a23 b26 + b25       b27   b27 + 1]
[        0         1        a2         0]
\end{verbatim}

The first row of the matrix gives the value of $Sq^8 \cdot Sq{(5,1)}$ since $Sq{(5,1)}$ is
the first basis element in degree~8, as exhibited above.
\end{remark}

\section{The general case}
\label{sec:general}

Turning to the general case, we follow the same strategy.  There are $124$
indeterminates required to define the $Sq^8$ action and another $26$
needed for $Sq^{16}$.  We use certain Adem relations to define all $Sq^i$
in terms of these,
then compute the ideal defining the subvariety of $\F_2^{150}$ containing
all possible $\cA$ actions.

\begin{theorem}\leavevmode
\label{thm:general}
\begin{enumerate}
\item
The variety of $\cA$-module actions on $\cA(2)$ is defined by
\[
\F_2[a_1, a_2, a_{3}, a_{21}, a_{47}, a_{48}, 
a_{60}, a_{61}, a_{62}]/I
  \otimes \F_2[b_1,b_2,b_3,b_4]
\]
where $I$ is the ideal with generators
    \begin{multline*}
    a_{1} a_{21} + a_{1} a_{60} + a_{1} a_{62} + a_{2} a_{3} a_{21}
    a_{60} a_{62} + a_{2} a_{3} a_{21} a_{60} + a_{2} a_{3} a_{21} +
	a_{2} a_{3} a_{60} + a_{2} a_{3} a_{62} + a_{2} a_{3} +\\
	a_{2} a_{21} a_{47} a_{60} + a_{2} a_{21} a_{47} a_{62} +
	a_{2} a_{21} a_{47} + a_{2} a_{21} a_{48} a_{60} + a_{2}
	a_{21} a_{48} a_{62} + a_{2} a_{21} a_{60} + a_{2} a_{48} + \\
	a_{2} + a_{3} a_{21} a_{60} a_{62} + a_{3} a_{21} + a_{3}
	a_{60} a_{62} + a_{3} a_{60} + a_{21} a_{47} a_{60} + a_{21}
	a_{47} a_{62} + a_{21} a_{48} a_{60} + \\
	a_{21} a_{48} a_{62} + 
	a_{21} a_{62} + a_{21} + a_{47} a_{60} + a_{47} a_{62} +
	a_{48} a_{60} + a_{48} a_{62} + a_{48} + a_{62} + 1,
\end{multline*}
\begin{multline*}
    a_{1} a_{2} a_{21} a_{60} + a_{1} a_{2} a_{60} a_{62} + a_{1}
    a_{21} + a_{1} a_{60} a_{62} + a_{1} a_{60} + a_{1} a_{62} +
	a_{2} a_{3} a_{21} a_{60} a_{62} + a_{2} a_{3} a_{60} a_{62} + \\
	a_{2} a_{21} a_{47} a_{60} + 
	a_{2} a_{21} a_{48} a_{60}
	+ a_{2} a_{21} a_{60} a_{62} + a_{2} a_{47} a_{60} a_{62}
	+ a_{2} a_{47} a_{60} + a_{2} a_{47} + a_{2} a_{48} a_{60} a_{62} + \\
	 a_{2} a_{61} + a_{3} a_{21} a_{60} a_{62} + 
	a_{21} a_{47} a_{60} + a_{21} a_{48} a_{60} + a_{21} a_{60} a_{62}
	+ a_{21} a_{61} + \\
	a_{47} a_{60} a_{62} + a_{47} a_{60} +  
	a_{47} a_{62} + a_{48} a_{60} a_{62} + a_{48} a_{60} + 
	a_{60} a_{61} + a_{60} a_{62},
\end{multline*}
and
\begin{multline*}
    a_{1} a_{2} a_{21} a_{60} + a_{1} a_{2} a_{21} a_{62} + a_{1}
    a_{2} a_{60} + a_{1} a_{2} a_{62} + a_{1} a_{2} + a_{1} a_{21} + 
	a_{2} a_{3} a_{21} a_{60} + \\
	a_{2} a_{3} a_{21} + 
	a_{2} a_{3} a_{60} a_{62} +
	a_{2} a_{21} a_{48} + a_{2} a_{21} a_{62}
	+ a_{2} a_{21} + a_{2} a_{48} a_{60} + \\
	a_{2} a_{48} a_{62} + 
	a_{2} a_{60} a_{62} + a_{2} a_{62} + a_{2} + a_{3} a_{60}
	+ a_{3} a_{62} + a_{3}.
\end{multline*}
The first factor defines the $Sq^8$ action and has $100$ $\F_2$ points.
The second factor defines the $Sq^{16}$ action and has $16$ $\F_2$ points.
This gives $1600$ ways to define an $\cA$-module structure on $\cA(2)$.
\item
The $Sq^8$ actions are described in Appendices~\ref{gensq8} and
\ref{genrelations}.  The coordinates $a_1$, $a_2$, $a_3$, $a_{21}$,
$a_{47}$, $a_{48}$, $a_{60}$, $a_{61}$, and $a_{62}$
determine, and are determined by, the following:
\begin{itemize}
\item
$ Sq^8 \cdot 1  = a_1 Sq{(5,1)} + a_2 Sq{(2,2)} + a_3 Sq{(1,0,1)},$
\item
$a_{21}$ is the coefficient of $Sq{(6,2)}$ in $Sq^8 \cdot Sq^4$, 
\item
$a_{47}$ is the coefficient of $Sq{(5,3)}$ in $Sq^8 \cdot Sq{(0,2)}$,
\item
$a_{48}$ is the coefficient of $Sq{(7,0,1)}$ in $Sq^8 \cdot Sq{(0,2)}$,
\item
$ Sq^8 \cdot Q_2  = a_{60} Sq{(6,3)} + a_{61} Sq{(5,1,1)} + a_{62} Sq{(2,2,1)},$
\end{itemize}
\item
The $Sq^{16}$ actions are described in Appendices~\ref{gensq16} and
\ref{genrelations}.
The coordinates $b_1$, $b_2$, $b_3$ and $b_4$ determine, and are
determined by
\[
Sq^{16} \cdot 1 = b_1 Sq(7,3) + b_2 Sq(6,1,1) 
                + b_3 Sq(3,2,1) + b_4 Sq(0,3,1).
\]
\end{enumerate}
\end{theorem}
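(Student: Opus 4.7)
The plan is to mirror the three-step strategy used in the proof of Theorem~\ref{thm:symmetric}, but without imposing the block-triangular form on the $Sq^8$ and $Sq^{16}$ matrices that was enforced by symmetry. Dropping that restriction, a general $Sq^8$ action on $A_n = B_n \oplus B_{n-7}Q_2$ is represented by a fully generic matrix, which raises the parameter count for $Sq^8$ from $94$ to $124$. Together with the $26$ indeterminates for $Sq^{16}$, a generic action lives in $\F_2^{150}$, and the goal is to cut out the subvariety defined by the Adem relations.

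In the first step, I instantiate the $Sq^8$ action by the $124$ matrix indeterminates and use the Adem identities $Sq^1 Sq^{2n}$, $Sq^2 Sq^{4n}$, and $Sq^4 Sq^{8n}$ to write $Sq^9$ through $Sq^{15}$ as matrices over the ambient polynomial ring. I then impose every remaining Adem relation $Sq^a Sq^b$ with $a < 2b$ and $1 \le a, b \le 15$ whose right-hand side lies in this range. Each such relation yields a collection of polynomial identities among the indeterminates. As in the symmetric case, I extract the linear part, compute a Gröbner basis, and substitute back into the matrices to reduce the number of free parameters.

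In the second step, I introduce the $26$ parameters for $Sq^{16}$, extend via the same three Adem identities to obtain $Sq^{17}$ through $Sq^{23}$, and then impose all remaining Adem relations that can act nontrivially on $\cA(2)$. Extracting the linear part once more and reducing leaves a small set of free variables together with a handful of genuinely nonlinear relations. In the third step, since only $\F_2$ points matter, I adjoin the idempotency relations $x^2 = x$ and sweep through the surviving variables in reverse order, eliminating any variable that appears in some relation as a standalone term (not as a factor in a higher-degree monomial) by substituting that relation. I expect this to terminate with the nine variables $a_1, a_2, a_3, a_{21}, a_{47}, a_{48}, a_{60}, a_{61}, a_{62}$ parameterizing $Sq^8$ modulo three relations, and the four variables $b_1, \ldots, b_4$ parameterizing $Sq^{16}$ with no surviving relations.

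The principal difficulty will be in the final stage for the $Sq^8$ slice. Unlike the symmetric case, where B\"okstedt's observation identifies the $Sq^8$ sub-variety with an affine space $\F_2^4$, here the count of $100$ $\F_2$ points is not a power of two, so the $Sq^8$ variety cannot be affine and the surviving relations must be irreducibly nonlinear. Confirming both that the three displayed polynomials in $I$ cut out exactly this set, and that enumerating gives precisely $100$ solutions, is the key verification; this is most reliably done by a Hilbert-series computation or by direct enumeration with the MAGMA code organized as in Appendices~\ref{gensq8} and~\ref{genrelations}. The $Sq^{16}$ factor, by contrast, should emerge as a clean $\F_2^4$, so that the total count is $100 \cdot 16 = 1600$, recovering Roth's count from~\cite{Roth}. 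The coefficient identifications in parts (2) and (3) then follow by tracing each surviving variable back to its role in the original generic ansatz, exactly as in the proof of Theorem~\ref{thm:symmetric}.
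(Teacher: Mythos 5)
Your proposal is correct and follows essentially the same three-step strategy as the paper's proof, including the final count of $100$ $Sq^8$ points by direct enumeration. The only procedural wrinkle you omit is that in the general case the paper must \emph{iterate} the linear-relation extraction (substituting the first $105$ linear relations back in and recomputing the Adem relations yields $3$ further linear relations, cutting the $Sq^8$ parameters from $19$ to $16$ before the nonlinear elimination begins, and likewise one extra pass after introducing $Sq^{16}$); a single pass as you describe would still terminate via the third-step elimination, but the bookkeeping would differ.
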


This reproduces, in quite different form, the results of Marilyn  Roth
in~\cite{Roth}.


\begin{proof}[Proof of Theorem~\ref{thm:general}]
We adopt the same basic strategy in the general case, with the
addition of a couple of steps to handle additional complexity.
The action of $Sq^8$
requires 124 variables $a_1$, \ldots, $a_{124}$, while the action of
$Sq^{16}$ requires $26$ variables $b_1$, \ldots, $b_{26}$.  
The exact role each of these variables plays can be found in
Appendices~\ref{gensq8} and \ref{gensq16}

We use the
same Adem relations to compute $Sq^9$, \ldots, $Sq^{15}$ from $Sq^8$
as in the symmetric case, and then compute all the Adem relations 
involving only $Sq^i$ for $i \leq 15$.  This produces $564$ distinct
relations, of which $519$ are linear.  A Gr\"obner basis (row reduction)
of the linear relations gives $105$ relations which allow us to eliminate all but
$124-105 = 19$ of the $a_i$.   

Before proceeding to the second step, we repeat the first step.  That is,
we use these $105$ relations to redefine
$Sq^8$, \ldots, $Sq^{15}$ in terms of the remaining $19$ variables, and again
compute all the relations between them implied by the Adem relations 
involving only the $Sq^i$ for $i \leq 15$.    This produces $22$ distinct
relations, of which $3$ are linear.   Those allow us to eliminate three more
variables, leaving the $16$ variables
\[ a_{1}, a_{2}, a_{3}, a_{21}, a_{22}, a_{23}, a_{24},
   a_{47},a_{48},a_{49},a_{50},a_{60},a_{61},a_{62},a_{90}, a_{102}.
\]

We again rewrite the
$Sq^i$ for $8 \leq i \leq 15$ in terms of these remaining $16$,
and compute the Adem relations once more.   No new linear relations are found.

We then use the Adem relations to define $Sq^{17}$, \ldots, $Sq^{23}$,
and compute all the relations determined by all Adem relations acting
on $\cA(2)$.  This produces $92$ relations, of which $45$ are linear.
Their Gr\"obner basis gives $18$ relations, allowing us to eliminate 
$18$ of the $26$ $b_i$.  Again, we rewrite the $Sq^i$ using these
$18$ linear relations to eliminate variables, then recompute the Adem
relations.

This leaves us with no new linear relations, and $22$ relations of higher
degree involving the $16$ $a_i$ listed above and the 8 variables
$
    b_{1}, b_{2}, b_{3}, b_{4}, b_{14}, b_{15}, b_{22}, b_{26}.
$

We then define $S$ to be the polynomial ring on these $24$ $a_i$ and
$b_i$, and consider the $22$ nonlinear relations they must satisfy.

As in the symmetric case, we reduce these relations modulo the ideal
generated by the $x^2+x$.  We then take the `third step', as in the symmetric
case, working our way through the variables in reverse order, from $b_{26}$ to
$a_1$, seeking relations which allow us to eliminate that variable and
rewrite the remaining relations.  

At the start, the relations are

\begin{longtable}{>{$}r<{$} >{$}c<{$}  >{$}l<{$}}
r_{1} &=&     a_{1} a_{21} + a_{1} + a_{2} a_{47} + a_{3} a_{60} + a_{21} + a_{22}, \\
r_{2} &=&     a_{3} a_{62} + a_{3} + a_{23} + a_{49} + a_{50} + a_{90} + 1, \\
r_{3} &=&     a_{2} a_{48} + a_{2} a_{49} + a_{3} + a_{21} + a_{23} + a_{24}, \\
r_{4} &=&     a_{1} a_{2} + a_{2} a_{3} + a_{2} a_{21} + a_{2} a_{23} + a_{2} a_{47} + a_{2} a_{48} + a_{2} a_{50} + a_{2} + a_{3} + a_{21} +  \\
  & &  \phantom{xx}          a_{23} + a_{47} + a_{48} + a_{50} + b_{2} + b_{4} + b_{15}, \\
r_{5} &=&     a_{1} a_{21} + a_{2} a_{21} + a_{3} a_{21} + a_{21} a_{23} + a_{21} a_{47} + a_{21} a_{48} + a_{21} a_{50} + a_{21} + a_{23} + \\
  & &  \phantom{xx}           a_{24} + a_{48} + a_{49} + a_{50}, \\
r_{6} &=&     a_{1} + a_{2} a_{23} + a_{2} + a_{3} + a_{21} a_{23} + a_{21} + a_{23} a_{60} + a_{23} a_{62} + a_{47} + a_{48} + a_{50}  + a_{90}, \\
r_{7} &=&     a_{2} a_{50} + a_{3} a_{62} + a_{3} + a_{23} + a_{61} + a_{62} + a_{102} + 1, \\
r_{8} &=&     a_{1} a_{21} + a_{2} a_{47} + a_{2} a_{48} + a_{2} a_{49} + a_{2} + a_{3} a_{60} + a_{3} a_{62} + a_{3}, \\
r_{9} &=&     a_{1} a_{21} + a_{1} a_{60} + a_{1} a_{62} + a_{1} + a_{2} a_{21} + a_{2} a_{23} + a_{2} a_{47} + a_{2} a_{48} + a_{2} a_{50} + \\
  & &  \phantom{xx}   a_{3} a_{21} + a_{3} a_{60} + a_{3} a_{62} + a_{21} + a_{23} + a_{47} + a_{48} + a_{50} + b_{1} + b_{3} + b_{14} + 1, \\
r_{10} &=&     a_{2} a_{48} + a_{2} a_{49} + a_{2} a_{50} + a_{21} + a_{23} + 1, \\
r_{11} &=&     a_{1} a_{60} + a_{1} a_{62} + a_{2} a_{60} + a_{2} a_{61} + a_{2} + a_{3} a_{60} + a_{21} a_{60} + a_{21} a_{61} + a_{21} +  \\
   & &  \phantom{xx}           a_{23} a_{60} + a_{47} a_{60} + a_{47} a_{62} + a_{48} a_{60} + a_{50} a_{60} + a_{60} a_{61} + a_{60} a_{62} +  \\
   & &  \phantom{xx}          a_{60} + a_{61} + a_{62} + a_{102}, \\
r_{12} &=&     a_{1} a_{21} + a_{2} a_{47} + a_{2} a_{50} + a_{2} + a_{3} a_{60} + a_{3} a_{62} + a_{3} + a_{21} + a_{23} + 1, \\
r_{13} &=&     a_{1} a_{21} + a_{2} a_{21} + a_{2} a_{23} + a_{3} a_{21} + a_{21} a_{47} + a_{21} a_{48} + a_{21} a_{50} + a_{22} +  \\
   & &  \phantom{xx}         a_{23} a_{60} + a_{23} a_{62} + a_{23} + a_{47} + a_{50} + 1, \\
r_{14} &=&     a_{1} + a_{2} a_{49} + a_{2} + a_{3} + a_{21} a_{49} + a_{21} + a_{23} + a_{47} + a_{48} + a_{49} a_{60} + a_{49} a_{62}  + \\
   & &  \phantom{xx}          a_{49} + a_{62} + a_{90}, \\
r_{15} &=&     a_{2} a_{50} + a_{3} + a_{24} + 1, \\
r_{16} &=&     a_{1} + a_{2} + a_{3} a_{62} + a_{22} + a_{23} + a_{24}, \\
r_{17} &=&     a_{2} a_{23} + a_{21} a_{23} + a_{21} + a_{22} + a_{23} a_{60} + a_{23} a_{62} + a_{24} + a_{47} + a_{48} + a_{49} + 1, \\
r_{18} &=&     a_{1} + a_{2} a_{50} + a_{2} + a_{3} a_{62} + a_{3} + a_{22} + a_{23} + 1, \\
r_{19} &=&     a_{2} a_{21} + a_{2} a_{23} + a_{2} a_{48} + a_{2} a_{50} + a_{2} a_{60} + a_{2} a_{61} + a_{3} a_{21} + a_{3} a_{60} +  \\
   & &  \phantom{xx}         a_{3} a_{62} + a_{22} + a_{23} + a_{47} + a_{48} + a_{50} + b_{3} + b_{14} + b_{26} + 1, \\
r_{20} &=&     a_{1} a_{21} + a_{1} + a_{2} a_{21} + a_{2} a_{23} + a_{2} + a_{3} a_{21} + a_{21} a_{47} + a_{21} a_{48} + a_{21} a_{50} +  \\
   & &  \phantom{xx}         a_{23} a_{60} + a_{23} a_{62} + a_{23} + a_{47} + a_{50} + a_{61} + a_{62} + a_{102} + 1, \\
r_{21} &=&     a_{2} a_{3} + a_{3} a_{21} + a_{3} a_{60} + a_{3} a_{62} + a_{22} + a_{23} + a_{24} + b_{3} + b_{4} + b_{14} + b_{15} + b_{22}, \\
r_{22} &=&     a_{1} a_{2} + a_{1} a_{21} + a_{1} a_{60} + a_{1} a_{62} + a_{1} + a_{2} a_{3} + a_{2} + a_{3} a_{21} + a_{3} a_{60} +  \\
   & &  \phantom{xx}         a_{3} a_{62} + a_{3} + b_{1} + b_{2} + b_{3} + b_{4} + b_{14} + b_{15} + 1 \\
\end{longtable}

Using these, we are able to
\begin{enumerate}
\item
eliminate $b_{26}$ using $r_{19}$,
\item
eliminate $b_{22}$ using $r_{21}$,
\item
eliminate $b_{15}$ using $r_{4}$,
\item
eliminate $b_{14}$ using $r_{9}$,
\item
eliminate $a_{102}$ using $r_{7}$,
\item
eliminate $a_{90}$ using $r_{2}$,
\item
eliminate $a_{50}$ using $r_{14}$,
\item
eliminate $a_{49}$ using $r_{17}$,
\item
eliminate $a_{24}$ using $r_{16}$,
\item
eliminate $a_{23}$ using $r_{15}$, and
\item
eliminate $a_{22}$ using $r_{1}$.
\end{enumerate}

This leaves the $13$ variables and $3$ relations given in the theorem.
Appendix~\ref{genrelations} expresses each of the $150$ variables in
terms of the remaining $13$ variables.

Finally, we manually count the number of $\F_2$ points in the 
variety defining the $Sq^8$ action, giving the result in 
Appendix~\ref{allgenSq8}.  The MAGMA code for these three
steps in the general case can be found in Appendices~\ref{codegen}
and \ref{codegen3}.

\end{proof}

\section{Actions on $\cB(2)$}
\label{sec:Baction}

We next analyze the $\cA$ actions on $\cB(2)$.

\begin{theorem}\leavevmode
\label{thm:Baction}
\begin{enumerate}
\item
The variety of $\cA$-module actions on $\cB(2)$ is defined by
\[
\F_2[a_1, a_2, a_{13}, a_{23}] \otimes \F_2[c_1].
\]
The first factor defines the $Sq^8$ action and has $16$ $\F_2$ points.
The second factor defines the $Sq^{16}$ action and has $2$ $\F_2$ points,
giving $32$ ways to define an $\cA$-module structure on $\cB(2)$.
\item
The coordinates $a_1$, $a_2$, $a_{13}$ and $a_{23}$ determine, and 
are determined by
\begin{itemize}
\item
$ Sq^8 \cdot 1  = a_1 Sq{(5,1)} + a_2 Sq{(2,2)}$,
\item
$a_{13}$ is the coefficient of $Sq{(6,2)}$ in $Sq^8 \cdot Sq^4$,  and
\item
$ Sq^8 \cdot Sq{(2,2)}  = a_{23} Sq{(7,3)}$.
\end{itemize}
\item
The coordinate $c_1$ determines, and is determined by
\[
Sq^{16} \cdot 1 = c_1 Sq{(7,3)}.
\]
\end{enumerate}
\end{theorem}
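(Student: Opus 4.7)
The plan is to repeat the MAGMA-based strategy used for Theorem~\ref{thm:symmetric}, adapted to the Hopf algebra quotient $\cB(2) = \cA(2)\modmod E[Q_2]$. Since $\cB(2)$ has formal dimension $16$ and $\F_2$-dimension $32$, with basis $\{Sq(r_1,r_2)\mid 0\leq r_1\leq 7,\,0\leq r_2\leq 3\}$ inherited from the Milnor basis restricted to $r_3=0$, the actions of $Sq^1, Sq^2, Sq^4$ are already fixed by the $\cA(2)$-structure, and for degree reasons every $Sq^{2^i}$ with $2^i>16$ acts trivially. Hence it suffices to determine the possible $Sq^8$ and $Sq^{16}$ actions.

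First I would introduce indeterminates $a_i$ parameterizing the entries of the linear maps $\cB(2)_n\to\cB(2)_{n+8}$ for $0\leq n\leq 8$. Note that, in contrast to the symmetric case on $\cA(2)$, there is no block structure of the form $\bigl(\begin{smallmatrix} M_n & N_n\\ 0 & M_{n-7}\end{smallmatrix}\bigr)$ to impose, so only the ``diagonal'' variables remain. Using the Adem relations $Sq^1Sq^{2n}$, $Sq^2Sq^{4n}$, $Sq^4Sq^{8n}$, I would express $Sq^9,\ldots,Sq^{15}$ in terms of the $a_i$ and then compute the full set of Adem relations $Sq^aSq^b$ with $a<2b$ and $1\leq a,b\leq 15$ acting on $\cB(2)$. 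Extracting the linear part and computing a Gr\"obner basis (row reduction) should reduce the variable set down to the four coordinates $a_1, a_2, a_{13}, a_{23}$ identified in statement (2).

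Next I would introduce the single indeterminate $c_1$ parameterizing $Sq^{16}\colon\cB(2)_0\to\cB(2)_{16}$ (both one-dimensional), use the same three families of Adem relations to define $Sq^{17},\ldots,Sq^{23}$, and then compute the full set of Adem relations acting on $\cB(2)$. After extracting linear relations and reducing modulo the ideal $(x^2+x)$ as in Section~\ref{sub:thirdstep}, I would run the third-step variable elimination to process any remaining nonlinear relations.

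The key point to verify---and the expected main simplification relative to Theorem~\ref{thm:symmetric}---is that no nonlinear relation survives. This is morally because the obstruction $a_1 a_2 + a_1 a_{13} + a_2 a_{13} a_{23} + \cdots + b_1$ in the symmetric case arose from the coupling between $\Img Q_2$ and its quotient $\cB(2)$ encoded by condition~\ref{defsym}.(\ref{defsym2}); on $\cB(2)$ itself, the $\Img Q_2$ summand is absent, so no such coupling remains. Once this is confirmed by the computation, the variety is the affine space $\F_2[a_1,a_2,a_{13},a_{23}]\otimes\F_2[c_1]$, giving $16\cdot 2=32$ $\F_2$-points, and the identification of the coordinates with the stated coefficients is read directly from the generic expressions for $Sq^8\cdot 1$, $Sq^8\cdot Sq^4$, $Sq^8\cdot Sq(2,2)$, and $Sq^{16}\cdot 1$.
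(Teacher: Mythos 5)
Your proposal follows essentially the same route as the paper: restrict to the $B_n$ summands, parameterize $Sq^8$ by the $28$ diagonal variables and $Sq^{16}$ by the single $c_1$, impose the Adem relations, and eliminate variables. One small correction: the linear relations alone only get you down to five variables $a_1,a_2,a_{13},a_{14},a_{23}$, and a single quadratic relation $a_1+a_{13}+a_{14}+a_1a_{13}+a_2a_{23}=0$ does survive — but since $a_{14}$ occurs in it only as a linear term, your planned third-step elimination removes it, leaving the free ring on the four stated coordinates exactly as in the paper.
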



\begin{corollary}
\label{cor:B2}
Four of the $32$ $\cA$ actions on $\cB(2)$ do not lift to 
symmetric $\cA$ actions
on $\cA(2)$.   These are the actions in which
$(a_1, a_2, a_{13}, a_{23}) = (0,1,0,0)$ or $(1,1,0,1)$.

The $Sq^8$ actions with 
$(a_1, a_2, a_{13}, a_{23}) = (0,1,0,1)$ or $(1,1,0,0)$ have
two symmetric lifts each, with $b_1=0$ or $1$.   The $Sq^8$ actions corresponding to the
remaining $12$ values of $(a_1, a_2, a_{13}, a_{23})$ lift uniquely
to a symmetric  $Sq^8$ action on $\cA(2)$.
\end{corollary}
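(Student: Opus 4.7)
The plan is to reduce the corollary to a direct analysis of the single relation defining the ideal $I$ in Theorem~\ref{thm:symmetric}. By Theorem~\ref{thm:Baction}, a $Sq^8$ action on $\cB(2)$ is specified by a point $(a_1, a_2, a_{13}, a_{23}) \in \F_2^4$; by Theorem~\ref{thm:symmetric}, a symmetric $Sq^8$ action on $\cA(2)$ is specified by a point $(a_1, a_2, a_{13}, a_{23}, b_1) \in \F_2^5$ satisfying the single relation generating $I$. One checks (it is built into the way the coordinates $a_1, a_2, a_{13}, a_{23}$ are defined in the two theorems) that the forgetful map from symmetric $\cA$-actions on $\cA(2)$ to $\cA$-actions on $\cB(2)$ is, on the $Sq^8$ factor, simply the projection that drops the $b_1$ coordinate. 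So the question of lifting reduces to: for which tuples $(a_1, a_2, a_{13}, a_{23})$ does the polynomial defining $I$, viewed as an equation in the single unknown $b_1$, admit a solution, and how many?

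The key observation is that the relation is affine in $b_1$. Collecting $b_1$-terms gives
\[
(a_2 a_{13} + a_2 + 1)\,b_1 \;=\; a_1 a_2 + a_1 a_{13} + a_2 a_{13} a_{23} + a_2 a_{13} + a_2 a_{23} + a_2.
\]
I would then split into cases according to whether the coefficient $a_2 a_{13} + a_2 + 1$ of $b_1$ is nonzero over $\F_2$. Over $\F_2$, the coefficient vanishes iff $a_2(a_{13}+1) = 1$, i.e.\ exactly when $a_2 = 1$ and $a_{13} = 0$; otherwise the coefficient is $1$ and $b_1$ is uniquely determined, giving a unique symmetric lift in those $12$ tuples.

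When $a_2 = 1$ and $a_{13} = 0$, the equation degenerates to $0 = a_1 + a_{23} + 1$. When $a_1 + a_{23} = 1$, the equation holds identically in $b_1$, so both values $b_1 \in \{0,1\}$ give symmetric lifts; this occurs precisely at $(a_1,a_2,a_{13},a_{23}) = (0,1,0,1)$ and $(1,1,0,0)$. When $a_1 + a_{23} = 0$, the equation is inconsistent, so no symmetric lift exists; this occurs precisely at $(0,1,0,0)$ and $(1,1,0,1)$. Assembling the three cases gives $12\cdot 1 + 2\cdot 2 + 2\cdot 0 = 16$ total symmetric $Sq^8$ lifts, consistent with Theorem~\ref{thm:symmetric}, and reproduces exactly the four exceptional tuples listed in the corollary.

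No step is really an obstacle here; the corollary is essentially a bookkeeping consequence once Theorems~\ref{thm:symmetric} and \ref{thm:Baction} are in hand. The only point deserving care is verifying that the coordinates labeled $a_1, a_2, a_{13}, a_{23}$ in the two theorems really are compatible under the restriction map from symmetric $\cA$-actions on $\cA(2)$ to $\cA$-actions on $\cB(2)$. This is immediate from the definitions: these coordinates are read off from $Sq^8\cdot 1$, the coefficient of $Sq(6,2)$ in $Sq^8\cdot Sq^4$, and the coefficient of $Sq(5,3)$ in $Sq^8\cdot Sq(0,2)$, all of which live in $\cB(2)$-basis elements (the $r_3=0$ part of the Milnor basis) and are therefore preserved by the quotient map $\cA(2)\to\cB(2)$.
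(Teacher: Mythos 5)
Your proof is correct, and it takes a genuinely different route from the paper's. The paper proves Corollary~\ref{cor:B2} by inspecting the explicit enumeration of the sixteen $\F_2$-points of the $Sq^8$ factor of $V_{\text{sym}}$ (Appendix~\ref{allsymSq8}) and simply observing which quadruples $(a_1,a_2,a_{13},a_{23})$ are absent, which occur twice, and which occur once. You instead solve the generator of $I$ as an affine equation in $b_1$, isolating the coefficient $a_2a_{13}+a_2+1$; this \emph{explains} why the exceptional tuples are exactly those with $a_2=1$, $a_{13}=0$ (there the equation degenerates to $a_1+a_{23}+1=0$, which is either vacuous, giving two lifts, or inconsistent, giving none) and reconfirms the count $12\cdot1+2\cdot2=16$. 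The paper's approach is a verification; yours is a derivation, and it would survive a change of enumeration. Two points are worth making explicit in your write-up. First, the identification of $V_{\text{sym}}\to V_{\cB}$ with the projection dropping $b_1$ (and the $d_i$) is most cleanly justified by composing Theorems~\ref{thm:symtogen} and~\ref{thm:gentoB}; note that Theorem~\ref{thm:Baction} describes $a_{23}$ via the coefficient of $Sq(7,3)$ in $Sq^8\cdot Sq(2,2)$, which matches the symmetric case's description via the coefficient of $Sq(5,3)$ in $Sq^8\cdot Sq(0,2)$ only because the linear reductions identify $a_{28}$ with $a_{23}$ (Appendix~\ref{symrelations}). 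Second, the corollary's first sentence counts full $\cA$-actions on $\cB(2)$, so the ``four'' non-lifting actions are the two bad $Sq^8$ tuples each paired with either value of $c_1$; your argument should note that the $Sq^{16}$ factor contributes no obstruction, since both $\F_2[c_1]$ and $\F_2[c_1,d_1,d_2,d_3]$ are free polynomial rings and the map on $Sq^{16}$ coordinates is surjective.
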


\begin{proof}[Proof of Theorem~\ref{thm:Baction}]
We carry out the same analysis as in Section~\ref{sec:symmetric}, using only
the subspace $B_n$ in the decomposition~\eqref{split} of $A_n$.
This entails working over the ring
\[
R = \F_2[a_1,\ldots,a_{28},c_1]
\]
and carrying out the same three steps.  The second and
third steps dramatically simplify.   In the second step,
the only possible nonzero
$Sq^{16}$ is $Sq^{16} \cdot 1 = c_1 Sq{(7,3)}$ and it does
not enter into any Adem relations.  
In the third step, there is only one relation, and it allows us to 
eliminate one more variable.

Precisely, we find that the linear relations satisfied by the $Sq^8$ action
allow us to reduce to $a_1$, $a_2$, $a_{13}$, $a_{14}$, $a_{23}$, and $c_1$
with one relation
\[
a_1 + a_{13} + a_{14} + a_1 a_{13} + a_2 a_{23} = 0.
\]
This allows us to write $a_{14}$ in terms of the other four.
All sixteen values of the remaining four variables give $Sq^8$
actions.   
\end{proof}

\begin{proof}[Proof of Corollary~\ref{cor:B2}]
Examining the $Sq^8$ actions on $\cA(2)$ given in Appendix~\ref{allsymSq8},
we see that two of the sixteen possible $Sq^8$ actions on $\cB(2)$
do not lift, namely
$(a_1,a_2,a_{13},a_{23}) = (0,1,0,0)$ and $(1,1,0,1)$.

Further, two values of $(a_1,a_2,a_{13},a_{23})$, namely
$(0,1,0,1)$ and $(1,1,0,0)$, occur twice, once with
$b_1 =  0$ and once with $b_1 = 1$, while the remaining 12 entries occur only once.
\end{proof}

\section{Relations between spaces of $\cA$-module structures}
\label{sec:relations}

We now determine the maps in Diagram~\ref{diagram:D}
between these varieties.
The map $V_{\text{sym}} \longto V_{\text{gen}}$
is easily determined by the roles of the indeterminates in defining the
$Sq^8$ and $Sq^{16}$ actions.   

\begin{theorem}
\label{thm:symtogen}
The inclusion of the variety of symmetric actions into the variety of all actions
is given on coordinates by the homomorphism
\[
\F_2[a_1, a_2, a_{3}, a_{21}, a_{47}, a_{48}, 
a_{60}, a_{61}, a_{62}, b_1,b_2,b_3,b_4]
\longto
\F_2[a_1, a_2, a_{13}, a_{23}, b_1, c_1,d_1,d_2,d_3]
\]
sending
\begin{align*}
a_1 & \mapsto a_1 \\
a_2 & \mapsto a_2 \\
a_{3} & \mapsto b_1 \\
a_{21} & \mapsto a_{13} \\
a_{47} & \mapsto a_{23} \\
a_{48} & \mapsto b_{25} =
a_{1} a_{2} a_{13} + a_{1} a_{13} + 
a_{2} a_{13} b_{1} + a_{2} + a_{13} b_{1} + a_{13} + 1 \\
a_{60} & \mapsto 0 \\
a_{61} & \mapsto a_1 \\
a_{62} & \mapsto a_2 \\
b_1 & \mapsto  c_1 \\
b_2 & \mapsto d_1 \\
b_3 & \mapsto d_2 \\
b_4 & \mapsto d_3. \\
\end{align*}
\end{theorem}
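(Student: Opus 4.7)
The plan is to identify, one generator at a time, the image in the coordinate ring of $V_{\text{sym}}$ of each of the $13$ generators of the coordinate ring of $V_{\text{gen}}$. This is feasible because each generator is characterized in Theorems~\ref{thm:symmetric} and~\ref{thm:general} as the coefficient of a specific Milnor basis element in one of the products $Sq^8\cdot 1$, $Sq^8\cdot Sq^4$, $Sq^8\cdot Sq(0,2)$, $Sq^8\cdot Q_2$, or $Sq^{16}\cdot 1$; the image of each generator is therefore fixed once one computes the corresponding coefficient in the symmetric parametrization.

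I would first dispose of the generators whose images are forced by direct matching of the two characterizations. The coordinates $a_1, a_2, a_3$ in $V_{\text{gen}}$ and $a_1, a_2, b_1$ in $V_{\text{sym}}$ are defined as the coefficients of $Sq(5,1)$, $Sq(2,2)$, $Sq(1,0,1)$ in $Sq^8\cdot 1$, so $a_1\mapsto a_1$, $a_2\mapsto a_2$, $a_3\mapsto b_1$. Similarly $a_{21}\mapsto a_{13}$ is the common coefficient of $Sq(6,2)$ in $Sq^8\cdot Sq^4$, and $a_{47}\mapsto a_{23}$ is the coefficient of $Sq(5,3)$ in $Sq^8\cdot Sq(0,2)$. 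The four coefficients in $Sq^{16}\cdot 1$ give $b_1\mapsto c_1$, $b_2\mapsto d_1$, $b_3\mapsto d_2$, $b_4\mapsto d_3$.

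Next I would use symmetry condition~\ref{defsym}.(2) to handle $a_{60}, a_{61}, a_{62}$, the coefficients describing $Sq^8\cdot Q_2$ in the general parametrization. The $\cA$-linearity of $[x]\mapsto xQ_2$ forces $Sq^8\cdot Q_2 = (Sq^8\cdot 1_{\cB(2)})\,Q_2$; since the image of $Sq^8\cdot 1$ in $\cB(2)$ is $a_1 Sq(5,1) + a_2 Sq(2,2)$ (the $b_1 Sq(1,0,1)$ summand already lies in $\Img{Q_2}$), and since $Sq(r_1,r_2)\,Q_2 = Sq(r_1,r_2,1)$ in the Milnor basis, one reads off $Sq^8\cdot Q_2 = a_1 Sq(5,1,1) + a_2 Sq(2,2,1)$. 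Matching against $a_{60}Sq(6,3) + a_{61}Sq(5,1,1) + a_{62}Sq(2,2,1)$ yields $a_{60}\mapsto 0$, $a_{61}\mapsto a_1$, $a_{62}\mapsto a_2$.

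The only remaining generator is $a_{48}$, the coefficient of $Sq(7,0,1)$ in $Sq^8\cdot Sq(0,2)$. Unlike the others, this value is not forced by surface inspection, because the corresponding coefficient in the symmetric case is a dependent variable eliminated during the three-step reduction in the proof of Theorem~\ref{thm:symmetric}. Here the plan is to consult Appendix~\ref{symrelations}, where this coefficient appears as $b_{25}$ and its reduction to the nine surviving coordinates is recorded; this should produce the polynomial asserted in the theorem. The main obstacle is bookkeeping: one must verify that the variable $a_{48}$ in the generic form of Appendix~\ref{gensq8} and the variable $b_{25}$ in the symmetric generic form of Appendix~\ref{sq8} are indexed so as to select the same Milnor basis coefficient of the same product, and that the recorded reduction gives precisely the stated expression in $a_1, a_2, a_{13}, b_1$.
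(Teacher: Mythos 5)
Your proposal is correct and follows essentially the same route as the paper's proof: direct matching of the coordinate characterizations in parts (2) and (3) of Theorems~\ref{thm:symmetric} and \ref{thm:general} for most generators, the symmetry condition~\ref{defsym}.(\ref{defsym2}) to deduce $Sq^8\cdot Q_2 = a_1 Sq(5,1,1)+a_2Sq(2,2,1)$ and hence the images of $a_{60},a_{61},a_{62}$, and the Appendix~\ref{symrelations} reduction of $b_{25}$ for the image of $a_{48}$. The only detail worth noting is that the vanishing of the $Sq(6,3)$ coefficient (i.e., $a_{60}\mapsto 0$) already follows from condition~\ref{defsym}.(1) alone, while the identification of the other two coefficients uses condition~\ref{defsym}.(2), exactly as you argue.
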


\begin{proof}[Proof of Theorem~\ref{thm:symtogen}]
Comparing parts (2) and (3) of Theorems~\ref{thm:symmetric} and
\ref{thm:general} allows us to identify the images of
all but $a_{48}$, $a_{60}$, $a_{61}$ and $a_{62}$.
Since $a_{48}$ is the coefficient of $Sq{(7,0,1)}$ in
$Sq^8 \cdot Sq{(0,2)}$, we see from Appendix~\ref{sq8}
that it must go to $b_{25}$, and from Appendix~\ref{symrelations}
we see that this equals
\[
a_{1} a_{2} a_{13} + a_{1} a_{13} + 
a_{2} a_{13} b_{1} + a_{2} + a_{13} b_{1} + a_{13} + 1.
\]
Finally, in the symmetric case, 
$Sq^8 \cdot Q_2 = a_1 Sq{(5,1,1)} + a_2 Sq{(2,2,1)}$,
which implies that $a_{60}$ goes to $0$,
$a_{61}$ goes to $a_1$ and
$a_{62}$ goes to $a_2$.
\end{proof}

\begin{remark}
Most of this is evident, e.g. from 
\[
 Sq^8 \cdot 1  = a_1 Sq{(5,1)} + a_2 Sq{(2,2)} + a_3 Sq{(1,0,1)}
\]
in the general case, and
\[
 Sq^8 \cdot 1  = a_1 Sq{(5,1)} + a_2 Sq{(2,2)} + b_1 Sq{(1,0,1)}
\]
in the symmetric case,
or
\[
Sq^{16} \cdot 1 = b_1 Sq(7,3) + b_2 Sq(6,1,1) 
                + b_3 Sq(3,2,1) + b_4 Sq(0,3,1)
\]
in the general case, and
\[
Sq^{16} \cdot 1 = c_1 Sq(7,3) + d_1 Sq(6,1,1) 
                + d_2 Sq(3,2,1) + d_3 Sq(0,3,1)
\]
in the symmetric case.   The actions on $Q_2 = Sq{(0,0,1)}$ are
of special interest.   We have
\[
Sq^8 \cdot Q_2  = a_{60} Sq{(6,3)} + a_{61} Sq{(5,1,1)} + a_{62} Sq{(2,2,1)}
\]
in the general case, and
\[
Sq^8 \cdot Q_2  = \phantom{a_{60} Sq{(6,3)} +} a_{1} Sq{(5,1,1)} + a_{2} Sq{(2,2,1)}
\]
in the symmetric case.   That the coefficient of $Sq{(5,3)}$ must be $0$ follows from the
assumption that $\Img{Q_2}$ is an $\cA$-submodule in the symmetric case.  The
coefficients $a_1$ and $a_2$ of $Sq{(5,1,1)} = Sq{(5,1)} Q_2$ and
$Sq{(2,2,1)} = Sq{(2,2)} Q_2$ follow from the isomorphism~\ref{defsym}.(\ref{defsym2})
in the definition of a symmetric action.
\end{remark}

Next, Theorem~\ref{thm:VQ} shows that $V_{\cQ}$ is simply the intersection of
$V_{\text{gen}}$ with the hyperplane $a_{60} = 0$.  From this, both
horizontal maps in Diagram~\ref{diagram:D}
are clear from Theorem~\ref{thm:symtogen}.

\begin{proposition}
If $A \in V_\cQ$ then there is a short exact sequence of
$\cA$-modules
\[
0 \to \Sigma^7 sA \longto A  \longto qA \to 0
\]
\end{proposition}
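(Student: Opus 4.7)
The proposition is essentially a matter of unwinding the definitions, and my plan is to present it as such. The key observation is that membership in $V_\cQ$ was defined precisely so that the $\cA$-action on $A$ preserves the submodule $\Img{Q_2}$, so the short exact sequence of $\cA(2)$-modules~\eqref{Qext} automatically becomes a short exact sequence of $\cA$-modules once we equip the submodule and quotient with their induced actions.

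Concretely, I would proceed in three short steps. First, since $A$ lies in $V_\cQ$, the image $\Img{Q_2} \subset A$ is an $\cA$-submodule, so the inclusion and projection in~\eqref{Qext} are $\cA$-linear; this gives a short exact sequence $0 \to \Img{Q_2} \to A \to A/\Img{Q_2} \to 0$ of $\cA$-modules, where the outer terms carry the induced $\cA$-structures. Second, I would invoke the $\cA(2)$-linear isomorphism $\Sigma^7 \cB(2) \overset{\cong}{\longto} \Img{Q_2}$ given by $[x] \mapsto x Q_2$ (centrality of $Q_2$ in $\cA(2)$ makes this well-defined and $\cA(2)$-linear). Transporting the induced $\cA$-action on $\Img{Q_2}$ across this isomorphism produces an $\cA$-action on $\cB(2)$, and this is by definition the image $sA \in V_\cB$; hence $\Img{Q_2} \cong \Sigma^7 sA$ as $\cA$-modules. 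Third, the quotient $A/\Img{Q_2}$ with its induced $\cA$-action is, by definition, $qA \in V_\cB$. Substituting these identifications into the short exact sequence above yields the desired
\[
0 \to \Sigma^7 sA \longto A \longto qA \to 0.
\]

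There is no real obstacle; the statement is a tautological consequence of the definitions of $V_\cQ$, $sA$, and $qA$. The only point worth emphasizing in the write-up is that the $\Sigma^7$ shift comes from the identification $\Img{Q_2} \cong \Sigma^7 \cB(2)$ (multiplication by $Q_2$ raises degree by $7$), which is an isomorphism of $\cA(2)$-modules regardless of the choice of $\cA$-action on $A$, but becomes an isomorphism of $\cA$-modules after we use the $V_\cQ$ hypothesis to give $\Img{Q_2}$ an $\cA$-structure and define $sA$ by pullback along this isomorphism.
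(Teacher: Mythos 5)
Your proposal is correct and takes essentially the same approach as the paper, which simply notes that the statement is a restatement of the definitions of $V_\cQ$, $s$, and $q$. Your write-up just spells out in more detail the unwinding that the paper leaves implicit.
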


\begin{proof}
This is simply a restatement of the definition of $V_\cQ$ and of
$s$ and $q$.
\end{proof}

\begin{theorem}
\label{thm:VQ}
An $\cA$ action on $\cA(2)$ preserves $\Img{Q_2}$, and hence induces
actions making the short exact sequence~\eqref{Qext} one of
$\cA$-modules, iff $a_{60} = 0$.
\end{theorem}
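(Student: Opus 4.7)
The plan is to handle the two implications separately. The forward direction is immediate, while the converse reduces to a computational check supported by the MAGMA framework already built for Theorem~\ref{thm:general}.

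For the forward direction, suppose the $\cA$-action preserves $\Img{Q_2}$. Then in particular $Sq^8 \cdot Q_2 \in \Img{Q_2}$. By part~(2) of Theorem~\ref{thm:general} we have
\[
Sq^8 \cdot Q_2 = a_{60} Sq(6,3) + a_{61} Sq(5,1,1) + a_{62} Sq(2,2,1).
\]
The elements $Sq(5,1,1)$ and $Sq(2,2,1)$ lie in $\Img{Q_2}$ (both have $r_3 = 1$), but $Sq(6,3)$ has $r_3 = 0$ and so sits in $B_{15}$. Hence $a_{60} = 0$.

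For the converse, assume $a_{60} = 0$. Since preservation of a submodule by an algebra reduces to preservation by a set of generators, it suffices to check each $Sq^{2^i}$. The operations $Sq^1$, $Sq^2$, $Sq^4$ preserve $\Img{Q_2}$ automatically because $Q_2$ is central in $\cA(2)$, so $\Img{Q_2} = \cA(2) Q_2$ is a two-sided ideal and in particular a left $\cA(2)$-submodule. The operation $Sq^{16}$ also preserves $\Img{Q_2}$ automatically: its only possibly nonzero value on $\Img{Q_2}$ is $Sq^{16} \cdot Q_2 \in A_{23} = \langle Sq(7,3,1) \rangle \subset \Img{Q_2}$. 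Higher $Sq^{2^i}$ act trivially by degree. Thus only $Sq^8$ requires checking.

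The essential step is therefore to verify that every ``lower-left block'' coefficient of $Sq^8$---that is, every coefficient expressing the component in $B_{n+8}$ of $Sq^8$ applied to a basis element of $B_{n-7} Q_2 \subset \Img{Q_2}$---vanishes once $a_{60} = 0$ is imposed. In the generic parameterization of Appendix~\ref{gensq8} these form a distinguished subset of the 124 original variables, and the three-step reduction of Section~\ref{sec:general} expresses each of them as a polynomial in the final 13 variables. The claim to verify is that each such polynomial, modulo the ideal $I$ of Theorem~\ref{thm:general}, is divisible by $a_{60}$. The main obstacle is simply carrying out this bookkeeping, which is routine once the MAGMA data is in hand: one inspects the entries in Appendix~\ref{genrelations} corresponding to the lower-left blocks and confirms that each reduces to a multiple of $a_{60}$.
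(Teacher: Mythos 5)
Your argument is correct and follows essentially the same route as the paper: reduce the question to $Sq^8$ (the other generators being automatic) and verify that the components carrying $\Img{Q_2}$ into the $B_n$'s all vanish once $a_{60}=0$. The paper merely makes your deferred bookkeeping explicit: by degree, only degrees $7$ and $8$ of $\Img{Q_2}$ can map outside it, giving exactly the two coefficients $a_{60}$ and $a_{71}$, and Appendix~\ref{genrelations} shows $a_{71}=a_{60}$ --- which is precisely the inspection you postpone to the appendix.
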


\begin{proof}[Proof of Theorem~\ref{thm:VQ}]
Since $\Img{Q_2}$ is zero below degree $7$,
and $\cB(2)$ is zero above degree $16$, the only $Sq^8$ actions which could send
a class in $\Img{Q_2}$ outside $\Img{Q_2}$ are from degrees $7$ and $8$.   In 
these degrees, the map from $\Img{Q_2}$ to the subspace spanned by $B_n$
is given by $a_{60}$, and $a_{71}$ respectively, by Appendix~\ref{gensq8}.
From Appendix~\ref{genrelations} we see that $a_{71} = a_{60}$, so the
only condition is that $a_{60} = 0$.
\end{proof}

The vertical maps in Diagram~\ref{diagram:D} are $s$, restriction to the
submodule $\Img{Q_2}$, and $q$, passage to the quotient $\cA(2)\modmod E[Q_2]$.

\begin{theorem}\leavevmode
\label{thm:gentoB}
\begin{enumerate}
\item
The projection $q$ from  the variety $V_\cQ$ to the variety of $\cA$-module
structures on $\cB(2)$
is given on coordinates by the homomorphism
\[
\F_2[a_1, a_2, a_{13}, a_{23}, c_1]
\longto
\F_2[a_1, a_2, a_{3}, a_{21}, a_{47}, a_{48}, 
a_{60}, a_{61}, a_{62}, b_1,b_2,b_3,b_4]/(a_{60})
\]
sending
\begin{align*}
a_1 & \mapsto a_1 \\
a_2 & \mapsto a_2 \\
a_{13} & \mapsto a_{21} \\
a_{23} & \mapsto a_{47} \\
c_1 & \mapsto  b_1 \\
\end{align*}
\item
The projection $s$ from  the variety $V_\cQ$ to the variety of $\cA$-module
structures on $\Img{Q_2}$
is given on coordinates by the homomorphism
\[
\F_2[a_1, a_2, a_{13}, a_{23}, c_1]
\longto
\F_2[a_1, a_2, a_{3}, a_{21}, a_{47}, a_{48}, 
a_{60}, a_{61}, a_{62}, b_1,b_2,b_3,b_4]/(a_{60})
\]
sending
\begin{align*}
a_1 & \mapsto a_{61} \\
a_2 & \mapsto a_{62} \\
a_{13} & \mapsto a_2 + a_{21} + a_{62} \\
a_{23} & \mapsto a_1 + a_{47} + a_{61} \\
c_1 & \mapsto  b_1 + a_1 a_{62} + a_2 a_{61}  \\
\end{align*}
\end{enumerate}
\end{theorem}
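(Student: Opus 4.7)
The plan is to pull back each of the five coordinates $a_1, a_2, a_{13}, a_{23}, c_1$ on $V_\cB$ (as parameterized in Theorem~\ref{thm:Baction}) along $q$ and $s$, working in the coordinate ring of $V_\cQ$, which by Theorem~\ref{thm:VQ} is the hyperplane $a_{60}=0$ in $V_{\text{gen}}$. Each of these five coordinates is defined as a specific matrix entry of $Sq^8$ or $Sq^{16}$ on a specified basis element of $\cB(2)$, so the pullback is found by reading off the corresponding entry in the generic action on $\cA(2)$, suitably interpreted on the quotient $\cB(2)$ or on the submodule $\Img Q_2$.

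For part (1), the map $q$ projects $\cA(2)$ onto $\cB(2)$ by discarding all basis elements $Sq(R)$ with $r_3 = 1$, so each matrix entry of the induced action on $\cB(2)$ agrees with the corresponding entry on $\cA(2)$. From $Sq^8 \cdot 1 = a_1 Sq(5,1) + a_2 Sq(2,2) + a_3 Sq(1,0,1)$, noting $Sq(1,0,1) \in \Img Q_2$, we obtain $a_1 \mapsto a_1$ and $a_2 \mapsto a_2$. The identifications $a_{13} \mapsto a_{21}$ and $c_1 \mapsto b_1$ are immediate from Theorem~\ref{thm:general}(2,3). The remaining match $a_{23} \mapsto a_{47}$ requires tracing the initial indeterminate in Appendix~\ref{gensq8} labeling the coefficient of $Sq(7,3)$ in $Sq^8 \cdot Sq(2,2)$ through the reductions of Appendix~\ref{genrelations}, where it collapses to $a_{47}$.

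For part (2), use the $\cA(2)$-isomorphism $\Sigma^7 \cB(2) \to \Img Q_2$, $[x] \mapsto xQ_2$, to transport the $\cA$-action on $\Img Q_2$ back to $\cB(2)$. Each coordinate of $V_\cB$ then becomes the coefficient of some $yQ_2$ in the generic action applied to $xQ_2$. Using $a_{60} = 0$, the formula $Sq^8 \cdot Q_2 = a_{61} Sq(5,1,1) + a_{62} Sq(2,2,1)$ yields $s(a_1) = a_{61}$ and $s(a_2) = a_{62}$ directly. The remaining maps $s(a_{13})$, $s(a_{23})$, $s(c_1)$ are the coefficients of $Sq(6,2,1)$ in $Sq^8 \cdot Sq(4,0,1)$, of $Sq(7,3,1)$ in $Sq^8 \cdot Sq(2,2,1)$, and of $Sq(7,3,1)$ in $Sq^{16} \cdot Q_2$, respectively, where $Sq(4,0,1) = Sq^4 \cdot Q_2$. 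Each such coefficient is an initial indeterminate from Appendix~\ref{gensq8} or \ref{gensq16}, and its image under the reduction of Appendix~\ref{genrelations} is the claimed polynomial.

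The main obstacle is verifying the nonlinear formula $s(c_1) = b_1 + a_1 a_{62} + a_2 a_{61}$, the sole bilinear entry in the statement. The quadratic correction arises because the Adem-driven consistency relations cutting out $V_{\text{gen}}$ couple the $Sq^{16}$-entry for the coefficient of $Sq(7,3,1)$ in $Sq^{16} \cdot Q_2$ to quadratic expressions in the $Sq^8$-entries via iterated compositions such as $Sq^8 \cdot (Sq^8 \cdot Q_2)$, where the outer $Sq^8$ acts on the degree-$15$ classes $Sq(5,1,1)$ and $Sq(2,2,1)$ via entries involving $a_1$ and $a_2$. Tracing this through the three-step reduction of the defining ideal produces the formula; in practice, MAGMA carries out the substitutions routinely once the generic $Sq^{16}$ data of Appendix~\ref{gensq16} is in place.
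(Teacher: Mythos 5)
Your proposal is correct and follows essentially the same route as the paper: identify each coordinate of $V_{\cB}$ with the matrix entry it represents, transport along the quotient map (for $q$) or along $[x]\mapsto xQ_2$ (for $s$) to locate the corresponding initial indeterminate of Appendix~\ref{gensq8} or \ref{gensq16} (namely $a_{101}$, $a_{124}$, $b_{26}$, etc.), and then read off its reduced form from Appendix~\ref{genrelations} with $a_{60}=0$. The paper likewise does not derive the quadratic term in $s(c_1)$ by hand but simply cites the computed relation $b_{26}=b_1+a_1a_{62}+a_2a_{61}$, so your deferral to the machine computation at that point matches the paper's own level of detail.
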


\begin{remark}
The maps $s$ and $q$ are equalized by $V_{\text{sym}} \longto
V_\cQ$ (as they must be).
\end{remark}

\begin{proof}[Proof of Theorem~\ref{thm:gentoB}]
The projection $q$ to the actions on the quotient $\cB(2)$ can be read off
parts (2) and (3) of Theorems~\ref{thm:general} and \ref{thm:Baction} 
by discarding $\Img{Q_2}$.  

The projection $s$ given by restricting to $\Img{Q_2}$ can be
calculated by multiplying parts (2) and (3) of Theorem~\ref{thm:Baction}
by $Q_2$.   This gives that the images of $a_1$ and $a_2$ are determined by
$Sq^8 \cdot Q_2$ in Theorem~\ref{thm:general}.(2).   Thus $a_1$ and
$a_2$ map to $a_{61}$ and $a_{62}$ respectively.
Similarly, the image of $a_{13}$ is the coefficient of $Sq{(6,2,1)} =
Sq{(6,2)}Q_2$ in $Sq^8\cdot Sq{(4,0,1)} = Sq^8 \cdot Sq^4 Q_2$.   By
Appendix~\ref{gensq16} this is the coordinate $a_{101}$, which equals 
$a_{62} + a_{21} + a_2$, by Appendix~\ref{genrelations}.   (Recall that
$a_{60} = 0 $ in $V_\cQ$.)
Similarly, the image of $a_{23}$ is $a_{124} = a_{61} + a_{47} + a_1$
and the image of $c_1$ is $b_{26} = b_1 + a_1 a_{62} + a_2 a_{61}$.
\end{proof}

\section{Duality}
\label{sec:duality}

We start by giving a proof of the general fact that a connected 
finite dimensional
Hopf algebra is self-dual which exposes the precise form
of the isomorphism.

\begin{proof}[Proof of Proposition~\ref{prop:gendual}]
Since $H$ is a free $H$-module on one generator, any homomorphism
$\theta : H \longto \Sigma^N DH$ is entirely determined by $\theta(1)$.
To be non-zero, $\theta(1)$ must be $s^*$ rather than $0$.
Then $\theta(a) = a\cdot\theta(1) = a\cdot s^*
= s^*\circ L_{\chi(a)}$.   Finally, $\theta$ must be an isomorphism because
$\theta(s) = 1^*$ is non-zero, and the socle is contained in every non-zero
ideal.
(E.g., see Margolis~\cite{Margolis}*{Chap. 12, Thm. 9}.)
\end{proof}

\begin{remark}
We could have used the right action of $H$ on itself to make $DH$ into
an $H$-module.   The isomorphism $\theta$ would then be replaced by an
isomorphism which sends $x$ to $s^* \circ R_x$, where $R_x$ is right
translation by $x$.   These two module structures are related by $D\chi$.  
\end{remark}

We now make a remarkable observation.

\begin{proposition}
\label{prop:Adual}
The isomorphism $\theta : \cA(2) \to  \Sigma^{23}D\cA(2)$
is given by 
\[
\theta(Sq{(r_1,r_2,r_3)}) = Sq{(7-r_1,3-r_2,1-r_3)^*}.
\]
\end{proposition}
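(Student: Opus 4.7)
By Proposition~\ref{prop:gendual}, the map $\theta$ is the unique left $\cA(2)$-module homomorphism $\cA(2) \to \Sigma^{23}D\cA(2)$ sending $1 \mapsto s^*$. Writing $\overline{R} := (7-r_1,\, 3-r_2,\, 1-r_3)$, I would introduce the candidate map $\phi(Sq(R)) := Sq(\overline{R})^*$. Since $\phi(1) = Sq(7,3,1)^* = s^*$, it suffices to verify that $\phi$ is $\cA(2)$-linear; then $\phi = \theta$ by the uniqueness in Proposition~\ref{prop:gendual}.

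Unfolding the action $a \cdot f = f \circ L_{\chi(a)}$ and testing against Milnor basis elements, the $\cA(2)$-linearity of $\phi$ reduces to the combinatorial identity
\[
[Sq(\overline{U})]\bigl(Sq^{2^i} \cdot Sq(R)\bigr) \;=\; [Sq(\overline{R})]\bigl(\chi(Sq^{2^i}) \cdot Sq(U)\bigr)
\]
for all multi-indices $R, U$ of matching degree and $i \in \{0,1,2\}$ (it is enough to check on the algebra generators $Sq^1, Sq^2, Sq^4$ of $\cA(2)$). Both sides are then computable via Milnor's product formula, using $\chi(Sq^1)=Sq^1$, $\chi(Sq^2)=Sq^2$, and $\chi(Sq^4)=Sq^4+Sq(1,1)$.

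More conceptually, all these identities together are equivalent, via Hopf duality between $\cA(2)$ and $\cA(2)^*$, to the single identity in $\cA(2)^* \otimes \cA(2)^*$:
\[
(\chi \otimes \mathrm{id})\bigl(\Delta(\xi_1^7 \xi_2^3 \xi_3)\bigr) \;=\; \sum_R \xi^R \otimes \xi^{\overline{R}},
\]
whose right-hand side is the Casimir element for the Poincaré duality structure on the truncated polynomial ring $\cA(2)^* = \F_2[\xi_1, \xi_2, \xi_3]/(\xi_1^8,\, \xi_2^4,\, \xi_3^2)$ with Frobenius form picking off the coefficient of the top monomial. To verify this, I would expand the left-hand side using $\Delta(\xi_n) = \sum_{i+j=n} \xi_j^{2^i} \otimes \xi_i$ together with the antipode values $\chi(\xi_1)=\xi_1$, $\chi(\xi_2)=\xi_2+\xi_1^3$, $\chi(\xi_3)=\xi_3+\xi_1\xi_2^2+\xi_1^4\xi_2+\xi_1^7$, then reduce modulo the truncation ideal in each tensor factor. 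A substantial simplification is that $(\chi \otimes \mathrm{id})\Delta(\xi_1^7) = \Delta(\xi_1^7) = \sum_{k=0}^{7}\xi_1^k\otimes\xi_1^{7-k}$ exactly, since $\chi$ fixes $\xi_1$ and $\binom{7}{k}\equiv 1\pmod 2$; only the $\xi_2^3$ and $\xi_3$ factors genuinely engage the antipode.

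The main obstacle is the sheer volume of intermediate monomials in this expansion: the truncation relations $\xi_1^8 = \xi_2^4 = \xi_3^2 = 0$ produce massive cancellation, and matching the surviving terms to the Casimir sum on the nose is ultimately a reflection of the compatibility between the Hopf and Frobenius structures on the finite-dimensional commutative algebra $\cA(2)^*$, which is precisely the content of the proposition.
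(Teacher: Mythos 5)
Your proposal is correct and follows essentially the same route as the paper: after observing that the candidate map sends $1$ to $s^*$ and invoking the uniqueness from Proposition~\ref{prop:gendual}, everything reduces to the identity $(\chi\otimes\mathrm{id})\psi(\xi_1^7\xi_2^3\xi_3)=\sum_R \xi(R)\otimes\xi(\overline{R})$ in $\F_2[\xi_1,\xi_2,\xi_3]/(\xi_1^8,\xi_2^4,\xi_3^2)$, which is exactly the paper's polynomial identity (up to applying $\chi\otimes\mathrm{id}$ and using $\chi^2=\mathrm{id}$), verified there by computer algebra. Your intermediate step of checking $\cA(2)$-linearity only on $Sq^1,Sq^2,Sq^4$ is precisely the hand calculation the author describes in the remark following Proposition~\ref{prop:Bdual}, so the only thing left implicit in your write-up, as in the paper's, is the final finite expansion.
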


\begin{proof}
By  Proposition~\ref{prop:gendual}, this is equivalent to the statement that
$s^*\circ L_{\chi(Sq(R'))}(Sq(R'')) = 1$ iff $R'+R'' = (7,3,1)$, where we are adding
the sequences termwise.  
In other words, 
\[
\chi(Sq(R'))Sq(R'') = \delta_{R'+R''=(7,3,1)} Sq(7,3,1),
\]
where we use 
the (generalized) Kronecker $\delta_P$, which
is $1$ if the statement $P$ is true, and $0$ if false.
This is dual to the statement that
\[
\psi(\xi_1^7 \xi_2^3 \xi_3) =
 \sum_{R' + R'' = (7,3,1)} \chi(\xi(R')) \otimes \xi(R'') =
 \sum_{R' + R'' = (7,3,1)} \xi(R') \otimes \chi(\xi(R'')),
\]
where the second equality holds by applying $\chi$ to both sides
and noting that
$\chi(\xi_1^7 \xi_2^3 \xi_3) = \xi_1^7 \xi_2^3 \xi_3$.
This can be checked quickly by any computer algebra system,
since it is an identity between polynomials in
$\F_2[\xi_1,\xi_2,\xi_3]/(\xi_1^8,\xi_2^4,\xi_3^2)$.
Here, $\xi(R) = \xi(r_1,\ldots,r_k) = \xi_1^{r_1}\cdots \xi_k^{r_k}.$
%
\end{proof}

A similar result holds for the quotient $\cB(2)$.

\begin{proposition}
\label{prop:Bdual}
There is a unique isomorphism of $\cA(2)$-modules $\cB(2) \to  \Sigma^{16}D\cB(2)$.
On Milnor basis elements it is given by $\theta(Sq{(r_1,r_2)}) =
Sq{(7-r_1,3-r_2)}^*$.
\end{proposition}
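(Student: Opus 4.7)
The plan is to reduce the statement directly to Proposition~\ref{prop:gendual}, applied to $\cB(2)$ rather than to $\cA(2)$. The key observation is that, since $Q_2$ is central in $\cA(2)$, the exterior algebra $E[Q_2]$ is a normal sub-Hopf algebra, so the quotient $\cB(2)=\cA(2)\modmod E[Q_2]$ is itself a connected finite-dimensional Hopf algebra. Its formal dimension is $23-7=16$, and its socle is spanned by $Sq(7,3)$, which is the image of the top class $Sq(7,3,1)$ of $\cA(2)$ under the projection (degree counting in the basis $\{Sq(r_1,r_2) : 0\le r_1\le 7,\ 0\le r_2\le 3\}$ confirms this). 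Moreover, the $\cA(2)$-module structures on both $\cB(2)$ and $D\cB(2)$ factor through the Hopf algebra projection $\pi\colon\cA(2)\to\cB(2)$, so that any $\cB(2)$-module homomorphism between such modules is automatically an $\cA(2)$-module homomorphism.

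With these observations in hand, Proposition~\ref{prop:gendual} applied to $H=\cB(2)$ yields an isomorphism of $\cB(2)$-modules $\theta\colon\cB(2)\to\Sigma^{16}D\cB(2)$ defined by $\theta(x)=s^*\circ L_{\chi(x)}$, where $s=Sq(7,3)$. By the preceding remark, $\theta$ is then an isomorphism of $\cA(2)$-modules. The explicit formula on Milnor basis elements is then derived exactly as in the proof of Proposition~\ref{prop:Adual}: the claim $\theta(Sq(r_1,r_2))=Sq(7-r_1,3-r_2)^*$ is equivalent to the assertion
\[
\chi(Sq(R'))\,Sq(R'') \;=\; \delta_{R'+R''=(7,3)}\,Sq(7,3) \qquad\text{in }\cB(2),
\]
which in turn is dual to the coproduct identity
\[
\psi(\xi_1^7\xi_2^3) \;=\; \sum_{R'+R''=(7,3)} \xi(R')\otimes\chi(\xi(R''))
\]
in the Hopf algebra dual $\cB(2)^{*}=\F_2[\xi_1,\xi_2]/(\xi_1^8,\xi_2^4)$. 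This is a finite polynomial identity that can be verified mechanically.

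For uniqueness, note that $\cB(2)$ is cyclic as an $\cA(2)$-module, generated by $1$, so any $\cA(2)$-module map $\varphi\colon\cB(2)\to\Sigma^{16}D\cB(2)$ is determined by $\varphi(1)$, which must lie in $(\Sigma^{16}D\cB(2))_0=\Hom_{\F_2}(\cB(2)_{16},\F_2)$. Since $\cB(2)_{16}$ is one-dimensional, spanned by $Sq(7,3)$, there is only one nonzero possibility, namely $Sq(7,3)^*$, which is the image of $1$ under our $\theta$. Hence $\theta$ is the unique nonzero $\cA(2)$-module homomorphism $\cB(2)\to\Sigma^{16}D\cB(2)$, and it is an isomorphism by the construction above. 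The only real subtlety to verify carefully is the first step—checking that $E[Q_2]$ is normal in $\cA(2)$ and identifying the socle and formal dimension of $\cB(2)$—after which the rest is essentially parallel to the $\cA(2)$ argument.
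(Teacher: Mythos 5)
Your proposal is correct and follows essentially the same route as the paper: apply the general self-duality of a connected finite-dimensional Hopf algebra (Proposition~\ref{prop:gendual}) to $\cB(2)$ itself, reduce the explicit formula to the coproduct identity for $\psi(\xi_1^7\xi_2^3)$ in $\F_2[\xi_1,\xi_2]/(\xi_1^8,\xi_2^4)$, and use the fact that the $\cA(2)$-action on $\cB(2)$ is pulled back along the quotient Hopf algebra map. You merely spell out a few points the paper leaves implicit (normality of $E[Q_2]$, and the uniqueness via cyclicity and one-dimensionality of the target in degree $0$).
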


\begin{proof}
The proof is similar but easier.   The relevant polynomial identity is
\[
\psi(\xi_1^7 \xi_2^3) =
 \sum_{R' + R'' = (7,3)} \chi(\xi(R')) \otimes \xi(R'') =
 \sum_{R' + R'' = (7,3)} \xi(R') \otimes \chi(\xi(R'')).
\]
in the polynomial ring
$\F_2[\xi_1,\xi_2]/(\xi_1^8,\xi_2^4)$.
Here, $\psi$ and $\chi$ are defined by setting $\xi_3=0$ in
the formulas for them in $\cA(2)$.

Since the $\cA(2)$ action on $\cB(2)$
is the pullback of the $\cB(2)$ action on itself 
along the quotient map,
it is equivalent to say that this is an isomorphism of $\cB(2)$-modules.
\end{proof}

At the end of this section we speculate on generalizations
of these last two results.

Having dealt with these generalities, we
now use this duality to transport an $\cA$-module structure on
$\cA(2)$ to the dual $\cA$ action on $D\cA(2)$, and then along $\theta$
to get a dual $\cA$-module structure on $\cA(2)$, and similarly for
$\cB(2)$.  We compute the maps this process induces 
on the varieties $V_{\text{gen}}$,
$V_{\text{sym}}$, $V_{\cQ}$, and $V_{\cB}$.
First, let us note how duality interacts with $s$ and $q$.

\begin{theorem}
$Ds = \Sigma^7 qD$ and 
$Dq = \Sigma^7 sD$.
\end{theorem}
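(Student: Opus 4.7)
The plan is to dualize the defining sequence of $V_\cQ$ and track each term through the isomorphisms of Corollary~\ref{cor:ABdual}. Fix $A\in V_\cQ$. Applying the exact contravariant duality functor $D$ to the canonical short exact sequence
\[
0 \longto sA \longto A \longto qA \longto 0
\]
yields
\[
0 \longto D(qA) \longto DA \longto D(sA) \longto 0.
\]
I would then transport this sequence to $\cA(2)$ via $\theta_\cA\colon\cA(2)\to\Sigma^{23}D\cA(2)$; by the definition of the duality involution on $V_\cQ$, the middle term becomes $DA\in V_\cQ$ with its dual action.

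The key subclaim is that $\theta_\cA$ sends $\Img{Q_2}\subset\cA(2)$ onto $\Sigma^{23}D(qA)\subset\Sigma^{23}D\cA(2)$. Using $\theta_\cA(x)(y)=s^*(\chi(x)y)$, for $x=x'Q_2$ and $y=y'Q_2$ in $\Img{Q_2}$ we would compute
\[
\chi(x)\,y \;=\; Q_2\,\chi(x')\,y'\,Q_2 \;=\; Q_2^2\,\chi(x')y' \;=\; 0,
\]
using that $Q_2$ is central, primitive (so $\chi(Q_2)=Q_2$), and squares to zero. Thus $\theta_\cA(\Img{Q_2})$ annihilates $\Img{Q_2}$, so it lies in $\Sigma^{23}D(qA)$; a dimension count, with both subspaces having dimension $\tfrac12\dim\cA(2)=32$, upgrades this to equality. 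Consequently the transported sequence reads
\[
0 \longto s(DA) \longto DA \longto q(DA) \longto 0,
\]
with $s(DA)$ identified with $\Sigma^{23}D(qA)$ and $q(DA)$ with $\Sigma^{23}D(sA)$.

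To finish, I would apply $\theta_\cB\colon\cB(2)\to\Sigma^{16}D\cB(2)$ to re-express $D(qA)$ and $D(sA)$ in terms of the duality involution on $V_\cB$. Since $qA$ is standardly positioned in degrees $[0,16]$ while $sA$ occupies degrees $[7,23]$, the $\Sigma^{23}$ shift introduced by $\theta_\cA$ decomposes as $\Sigma^{16}$ (absorbed into $\theta_\cB$) plus a residual $\Sigma^{23-16}=\Sigma^{7}$, giving the asserted identities $Ds=\Sigma^{7}qD$ and $Dq=\Sigma^{7}sD$.

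The main obstacle is bookkeeping the suspensions carefully; the conceptual content, naturality of duality applied to the $V_\cQ$-sequence, is essentially immediate once the subclaim $\theta_\cA(\Img{Q_2})=\Sigma^{23}D(qA)$ is established.
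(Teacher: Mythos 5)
Your proof is correct and follows essentially the same route as the paper: dualize the defining short exact sequence of $A \in V_\cQ$, transport along $\theta$, and match the terms of the reversed sequence with the canonical submodule and quotient of the dual module. Your explicit subclaim that $\theta(\Img{Q_2})$ annihilates $\Img{Q_2}$ (via centrality of $Q_2$, $\chi(Q_2)=Q_2$, and $Q_2^2=0$) fills in a detail the paper compresses into ``by definition of $s$ and $q$''; the only caution is to carry the suspension explicitly by starting from $0 \to \Sigma^7 sA \to A \to qA \to 0$, so that the final $\Sigma^7$ falls out of the arithmetic $23-16=7$ rather than from an informal decomposition of the shift.
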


\begin{proof}
For any $\cA$-module $A$ in $V_{\cQ}$ we have a short exact sequence of
$\cA$-modules
\[
0 \to \Sigma^7 sA \longto A \longto qA \to 0.
\]
Applying $\Sigma^{23} D$, we obtain a short exact sequence
\[
0 \to \Sigma^{23} DqA \longto \Sigma^{23}DA \longto \Sigma^{16}DsA \to 0.
\]
By definition of $s$ and $q$, this means that
\begin{itemize}
\item
$\Sigma^{16} D sA  = q\Sigma^{23} DA = \Sigma^{23} qDA$, and
\item
$\Sigma^{23} D qA  = \Sigma^7 s\Sigma^{23} DA = \Sigma^{23} \Sigma^7sDA$,
\end{itemize}
from which the result follows.
\end{proof}

In order to get explicit formulas for the duality map,
the first things we must compute are the
\[
\chi(Sq^a) = \sum_{j=1}^a Sq^j \chi(Sq^{a-j}).
\]
The (very brief) MAGMA code for this in in Appendix~\ref{codedual}.
We then use parts (2) and (3) of Theorems~\ref{thm:symmetric},
\ref{thm:general}, and \ref{thm:Baction}
to compute the parameters for the dual of the generic $\cA$ actions in
each case.

\begin{theorem}
Duality $D : V_{\text{sym}} \longto V_{\text{sym}}$ is given by
\label{thm:dualsymmetric}
\begin{align*}
a_1 & \mapsto a_{23} + 1 \\
a_2 & \mapsto a_{13} + 1 \\
a_{13} & \mapsto a_2 + 1 \\
a_{23} & \mapsto a_1 + 1 \\
b_1 & \mapsto 
b_1 + a_{13} + a_2 a_{13} + a_2 a_{23} + a_{13} a_{23} + b_1 a_{13} \\
c_1 & \mapsto c_1 + a_{13} + a_{23} + a_1 a_{13} + a_2 a_{23} \\
d_1 & \mapsto d_1 + 1 +  a_1 + a_{23} + c_1 + a_1 a_{13} 
 + a_2 a_{13} + a_2 a_{23} + a_{13} a_{23} + a_{13} b_1 \\
d_2 & \mapsto d_2 + 1 + a_1 + a_{23} + b_1 + c_1 
+ a_2 a_{13} + a_1 a_2 a_{13} + a_{23} a_{13} + a_2 a_{13} a_{23} 
+ a_2 b_1 + a_{13} b_1 \\
d_3 & \mapsto  
d_3 + d_2 + d_1 + c_1 +  a_{13} + a_1 a_2 + a_2 b_1 + a_{13} b_1 
\end{align*}
Sixteen  of the $256$ symmetric $\cA$-module structures are self dual:
four of the $16$ possible $Sq^8$ actions each have four $Sq^{16}$ actions which
make the resulting $\cA$-module self-dual.   These can be found in
Appendix~\ref{app:selfdualsym}.

\end{theorem}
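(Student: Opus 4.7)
The plan is to make the duality map $D$ on $V_{\text{sym}}$ explicit using the formula for $\theta$ from Proposition~\ref{prop:Adual}, then read off its effect on the nine defining parameters from Theorem~\ref{thm:symmetric}. Given a symmetric action $\mu$ with parameters $(a_1, a_2, a_{13}, a_{23}, b_1, c_1, d_1, d_2, d_3)$, the dual action $\mu^*$ on $\cA(2)$ is characterized by
\[
    \mu^*(a)(m) \;=\; \theta^{-1}\bigl(\theta(m) \circ L^{\mu}_{\chi(a)}\bigr),
\]
where $L^{\mu}_{\chi(a)}$ is left multiplication by $\chi(a)$ under the action $\mu$. Because $\theta$ swaps $Sq(r_1,r_2,r_3)$ with the dual basis element of $Sq(7-r_1, 3-r_2, 1-r_3)$, extracting any single coefficient of $\mu^*(Sq^i) \cdot Sq(R)$ reduces to extracting a single coefficient of $\chi(Sq^i) \cdot_\mu Sq(R')$ for the complementary sequence $R'$.

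First, I would compute $\chi(Sq^8)$ and $\chi(Sq^{16})$ in the Milnor basis using the recursion $\chi(Sq^a) = \sum_{j=1}^{a} Sq^j \chi(Sq^{a-j})$, truncated to degrees relevant for the action on $\cA(2)$; this is the short routine referenced in Appendix~\ref{codedual}. Then, by the coordinate descriptions in parts~(2)--(3) of Theorem~\ref{thm:symmetric}, the nine dual parameters are each pinned down by one specific coefficient in $\mu^*$: the three coefficients of $Sq(5,1)$, $Sq(2,2)$, $Sq(1,0,1)$ in $\mu^*(Sq^8)\cdot 1$; the $Sq(6,2)$-coefficient of $\mu^*(Sq^8)\cdot Sq^4$; the $Sq(5,3)$-coefficient of $\mu^*(Sq^8)\cdot Sq(0,2)$; and the four coefficients of $Sq(7,3)$, $Sq(6,1,1)$, $Sq(3,2,1)$, $Sq(0,3,1)$ in $\mu^*(Sq^{16})\cdot 1$. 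Each of these becomes a polynomial in the original nine parameters, which I then reduce modulo the defining ideal $I$ of Theorem~\ref{thm:symmetric} and modulo $\{x^2 + x\}$ to obtain the clean substitution rules listed in the statement.

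For the self-duality count I impose $D(a_i) = a_i$ on each parameter. Since the $Sq^8$ block (involving only $a_1, a_2, a_{13}, a_{23}, b_1$) decouples from the $Sq^{16}$ block, I would first solve the $Sq^8$ self-duality conditions: the fixed-point equations for $a_1, a_2, a_{13}, a_{23}$ force $a_1 + a_{23} = 1$ and $a_2 + a_{13} = 1$, and there is one further condition on $b_1$, which I expect to cut the $16$ possible symmetric $Sq^8$ actions down to $4$ self-dual ones. For each such $Sq^8$ choice, the $Sq^{16}$ self-duality conditions in $c_1, d_1, d_2, d_3$ form an inhomogeneous affine system over $\F_2$, and I expect exactly $4$ solutions per choice, giving the total of $16$ self-dual structures.

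The main obstacle is the polynomial bookkeeping in the middle step: each of the nine defining coefficients of $\mu^*$ is a quartic polynomial in nine variables that must be normalized modulo $I$ and $\{x^2+x\}$ before it matches the clean formulas in the theorem. This is conceptually routine but prohibitive by hand, so the computation is properly executed in MAGMA using the antipode code of Appendix~\ref{codedual} together with the generic-action data of Appendices~\ref{sq8} and \ref{sq16}; once the substitution rules are in hand, the self-dual enumeration is a short finite check over the $256$ points of $V_{\text{sym}}$, producing the list in Appendix~\ref{app:selfdualsym}.
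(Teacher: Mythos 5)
Your proposal follows essentially the same route as the paper: compute $\chi(Sq^8)$ and $\chi(Sq^{16})$ by the recursion, transport the dual action along the explicit isomorphism $\theta$ of Proposition~\ref{prop:Adual} so that each dual parameter becomes a single coefficient of $\chi(Sq^i)$ acting on the complementary Milnor basis element, reduce modulo the defining ideal and $x^2+x$, and then enumerate fixed points by machine. This is exactly what the paper does via the code in Appendix~\ref{codedual}, and your analysis of the self-duality count (four $Sq^8$ actions, each with four compatible $Sq^{16}$ actions) matches Appendix~\ref{app:selfdualsym}.
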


\begin{remark}
On the surface, this may appear incorrect,
since, by first principles, $D^2 = I$, yet on coordinates we have
\begin{itemize}
\item
$D(D(d_2))  = d_2 + a_2 + b_1 +  a_1 a_2 + a_1 a_{13}
   + a_2 a_{13} + a_2 a_{23} + a_2 a_{13} a_{23} +  a_2 b_1$, and
\item
$D(D(d_3)) = d_3 + a_1 a_{13} + a_1 a_2 a_{13} +  a_{13} b_1$.
\end{itemize}
However, the `error terms' in these formulas vanish 
when the relation defining $V_{\text{sym}}$ holds and the
coordinates lie in $\F_2$,
so that, in fact, $D^2 = I$ on the variety $V_{\text{sym}}$.
As a `sanity check', we have compared the dualization obtained from the formulas
above with that obtained by the {\tt dualizeDef} command in {\tt ext}.   They agree.
\end{remark}

\begin{theorem}
\label{thm:dualgeneral}
Duality $D : V_{\text{gen}} \longto V_{\text{gen}}$ is given by
\begin{align*}
a_1 & \mapsto 
a_{61} + a_{60} + a_{47} + a_{1} + 1\\
a_2 & \mapsto 
a_{62} + a_{60} + a_{21} + a_{2} + 1\\
a_3 & \mapsto 
 a_{62} a_{60} a_{3} + a_{62} a_{60} a_{2} + a_{62} a_{48} + 
 a_{62} a_{47} a_{2} + a_{62} a_{47} + a_{62} a_{21} a_{3} a_{2} +
 a_{62} a_{21} a_{3} + a_{62} a_{21} a_{2} a_{1} +\\
 &\phantom{YY}  a_{62} a_{21} + a_{62} a_{2} + 
   a_{62} a_{1} + a_{60} a_{48} + a_{60} a_{47} a_{2} +
 a_{60} a_{47} + a_{60} a_{21} a_{3} a_{2} + a_{60} a_{21} a_{2} a_{1} +
 a_{60} a_{21} a_{2} +\\
 &\phantom{YY} a_{60} a_{21} + a_{60} a_{2} + a_{60} a_{1} +
  a_{60} + a_{48} a_{21} + a_{48} a_{2} + a_{47} a_{21} a_{2} +
 a_{47} a_{21} + a_{21} a_{3} a_{2} + a_{21} a_{2} a_{1} +\\
 &\phantom{YY} a_{21} a_{1} + a_{21} + a_{3}\\
a_{21} & \mapsto 
a_{62} + 1\\
a_{47} & \mapsto 
a_{61} + 1\\
a_{48} & \mapsto 
a_{48}\\
a_{60} & \mapsto 
a_{60}\\
a_{61} & \mapsto 
a_{47} + 1\\
a_{62} & \mapsto 
a_{21} + 1\\
b_1 & \mapsto 
  b_{1} + a_{62} a_{60} a_{48} + a_{62} a_{60} a_{47} a_{2} + a_{62} a_{60} a_{47} +
  a_{62} a_{60} a_{21} a_{3} a_{2} + a_{62} a_{60} a_{21} a_{3} +
  a_{62} a_{60} a_{21} a_{2} a_{1} +\\
  & \phantom{YY} a_{62} a_{60} a_{21} + a_{62} a_{60} a_{3} +
  a_{62} a_{60} a_{1} + a_{62} a_{60} + a_{62} a_{47} + a_{62} + a_{61} a_{60} +
  a_{61} a_{21} + a_{61} +\\
  &\phantom{YY} a_{60} a_{48} a_{21} + a_{60} a_{48} a_{2} +
  a_{60} a_{48} + a_{60} a_{47} a_{21} a_{2} + a_{60} a_{47} a_{21} +
  a_{60} a_{47} a_{2} + a_{60} a_{47} + a_{60} a_{21} a_{2} +\\
  &\phantom{YY} a_{60} a_{21} a_{1} + a_{47} + a_{21} + a_{2} + a_{1}\\
b_2 & \mapsto 
  b_{2} + b_{1} + a_{62} a_{60} a_{3} a_{2} + a_{62} a_{60} a_{3} +
  a_{62} a_{60} a_{2} + a_{62} a_{48} + a_{62} a_{47} a_{2} + a_{62} a_{47} +
  a_{62} a_{21} a_{2} a_{1} +\\
  & \phantom{YY} a_{62} a_{21} a_{2} + a_{62} a_{21} a_{1} +
  a_{62} a_{21} + a_{62} a_{3} a_{2} + a_{62} a_{2} a_{1} + a_{62} + a_{61} +
  a_{60} a_{48} + a_{60} a_{47} a_{2} + a_{60} a_{47} +\\
  & \phantom{YY}a_{60} a_{21} a_{3} a_{2} +
  a_{60} a_{21} a_{3} + a_{60} a_{21} a_{2} a_{1} + a_{60} a_{21} a_{1} +
  a_{60} a_{21} + a_{60} a_{3} a_{2} + a_{60} a_{2} a_{1} + a_{48} a_{21} +
  a_{48} a_{2} +\\
  &\phantom{YY} a_{47} a_{21} a_{2} + a_{47} a_{21} + a_{47} + a_{21} a_{3} a_{2} +
  a_{21} a_{3} + a_{21} a_{2} a_{1} + a_{21} a_{2} + a_{21} a_{1} + a_{3} a_{2} + 1\\
b_3 & \mapsto 
  b_{3} + b_{1} + a_{62} a_{60} a_{21} a_{3} + a_{62} a_{60} a_{21} a_{2} +
  a_{62} a_{60} a_{3} a_{2} + a_{62} a_{60} a_{2} + a_{62} a_{48} a_{21} +
  a_{62} a_{47} a_{21} a_{2} +\\
  & \phantom{YY} a_{62} a_{47} a_{21} + a_{62} a_{47} a_{2} +
  a_{62} a_{21} a_{3} + a_{62} a_{21} a_{2} a_{1} + a_{62} a_{21} a_{2} +
  a_{62} a_{21} a_{1} + a_{62} a_{3} a_{2} + a_{62} a_{3} +\\
  &\phantom{YY} a_{62} a_{2} a_{1} +
  a_{62} a_{1} + a_{62} + a_{61} + a_{60} a_{48} a_{21} + a_{60} a_{47} a_{21} a_{2} +
  a_{60} a_{47} a_{21} + a_{60} a_{47} a_{2} +\\
  &\phantom{YY} a_{60} a_{21} a_{3} a_{2} +
  a_{60} a_{21} a_{2} a_{1} + a_{60} a_{21} a_{2} + a_{60} a_{21} a_{1} +
  a_{60} a_{21} + a_{60} a_{3} a_{2} + a_{60} a_{2} a_{1} + a_{60} a_{1} +\\
  &\phantom{YY} 
  a_{48} a_{21} a_{2} + a_{48} a_{21} + a_{48} a_{2} + a_{47} a_{21} + a_{47} +
  a_{21} a_{3} a_{2} + a_{21} a_{2} a_{1} + a_{21} a_{2} + a_{21} a_{1} +\\
  &\phantom{YY} 
  a_{3} a_{2} + a_{3} + a_{2} a_{1} + 1\\
b_4 & \mapsto 
  b_{4} + b_{3} + b_{2} + b_{1} + a_{62} a_{60} a_{3} a_{2} + a_{62} a_{21} a_{2} a_{1} +
  a_{62} a_{21} + a_{62} a_{3} a_{2} + a_{62} a_{2} a_{1} + a_{62} a_{1} +
  a_{60} a_{21} a_{3} a_{2} +\\
  &\phantom{YY} a_{60} a_{21} a_{2} a_{1} + a_{60} a_{21} + a_{60} a_{3} +
  a_{60} a_{2} a_{1} + a_{60} a_{1} + a_{21} a_{3} a_{2} + a_{21} a_{3} + a_{21} a_{2} +
  a_{21} a_{1} + a_{21} +\\
  &\phantom{YY} a_{3} a_{2} + a_{3} + a_{2} a_{1}
\end{align*}
Forty of the $1600$ $\cA$-module structures are self dual:
ten of the $100$ possible $Sq^8$ actions each have four $Sq^{16}$ actions which
make the resulting $\cA$-module self-dual.   These can be found in
Appendix~\ref{app:selfdualgen}.
\end{theorem}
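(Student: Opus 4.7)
The approach mirrors that of Theorem~\ref{thm:dualsymmetric} and is carried out by direct computation, leveraging the explicit form of $\theta$ in Proposition~\ref{prop:Adual}. Given an $\cA$-module structure $\rho$ on $\cA(2)$ parameterized by the thirteen coordinates of Theorem~\ref{thm:general}, the dual structure $\rho'$ is the unique $\cA$-action making $\theta$ into an $\cA$-isomorphism from $(\cA(2),\rho')$ to $(D\cA(2),\rho^{\vee})$, where $\rho^{\vee}(a,f)(x) = f(\chi(a)\cdot_\rho x)$. Substituting $\theta(Sq(R)) = Sq((7,3,1)-R)^{*}$ yields the master identity: the coefficient of $Sq(T)$ in $\rho'(a,Sq(R))$ equals the coefficient of $Sq((7,3,1)-R)$ in $\chi(a)\cdot_\rho Sq((7,3,1)-T)$. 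Everything therefore reduces to evaluating $\chi(Sq^8)$ and $\chi(Sq^{16})$ against the generic parameterized $\rho$-action.

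The computation proceeds in three steps, executed in MAGMA. First, compute $\chi(Sq^8)$ and $\chi(Sq^{16})$ as explicit sums of admissible monomials via the antipode recursion $\chi(Sq^n) = \sum_{j=1}^{n} Sq^j\,\chi(Sq^{n-j})$; see Appendix~\ref{codedual}. Second, using the generic $Sq^i$ matrices of Appendices~\ref{gensq8} and~\ref{gensq16} together with the substitutions of Appendix~\ref{genrelations} that express all $150$ original indeterminates in terms of the $13$ generators, evaluate $\chi(Sq^8)\cdot_\rho Sq(R)$ and $\chi(Sq^{16})\cdot_\rho Sq(R)$ on each Milnor basis element. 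Third, read off the distinguished coordinates of $\rho'$ according to the recipe in parts (2) and (3) of Theorem~\ref{thm:general}: the coefficients of $Sq(5,1)$, $Sq(2,2)$, $Sq(1,0,1)$ in $\rho'(Sq^8,1)$ give the images of $a_1,a_2,a_3$; those of $Sq(6,3)$, $Sq(5,1,1)$, $Sq(2,2,1)$ in $\rho'(Sq^8,Q_2)$ give $a_{60},a_{61},a_{62}$; and analogously for the remaining $a_{21},a_{47},a_{48}$ and for $b_1,\ldots,b_4$. Each resulting polynomial is then reduced modulo the ideal $I$ of Theorem~\ref{thm:general} together with the relations $x_i^2+x_i$, producing the substitutions in the theorem statement.

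The self-duality count is obtained by enumerating the $1600$ $\F_2$-points of $V_{\text{gen}}$, applying the computed map $D$ to each, and collecting the fixed points; this yields the list in Appendix~\ref{app:selfdualgen}. As in the symmetric case, one observes that $D^2 = I$ on the variety even though the two substitutions need not compose to the identity as polynomial maps. The main obstacle is bookkeeping rather than mathematical: the raw polynomials arising from $\chi(Sq^{16})$ acting on high-degree basis elements become unwieldy in thirteen variables, and only systematic normalization modulo $I$ together with $(x_i^2+x_i)$ collapses them to the forms displayed. A cross-check against the \texttt{dualizeDef} routine of \texttt{ext}, analogous to the one performed after Theorem~\ref{thm:dualsymmetric}, guards against coding errors.
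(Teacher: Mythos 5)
Your proposal matches the paper's method: the paper likewise computes $\chi(Sq^8)$ and $\chi(Sq^{16})$ by the antipode recursion applied to the generic parameterized action (the \texttt{CSq} matrices of Appendix~\ref{codedual}), and then reads off each dual coordinate as the coefficient of $Sq(\theta(x))$ in $\chi(Sq^i)\cdot Sq(\theta(y))$ for the coordinate recorded as the coefficient of $Sq(y)$ in $Sq^i\cdot Sq(x)$, exactly your master identity. The normalization modulo the defining ideal and $x_i^2+x_i$, and the enumeration of fixed $\F_2$-points for the self-duality count, are also as in the paper, so the proposal is correct and essentially identical in approach.
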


\begin{remark}
As in the symmetric case, the `discrepancy' between $D^2$ and the
identity homomorphism vanishes on the $\F_2$ points in $V_{\text{gen}}$.
\end{remark}

\begin{theorem}
\label{thm:dualBaction}
Duality $D : V_{\cB} \longto V_{\cB}$ is given by
\begin{align*}
a_1 & \mapsto 
a_{23} + 1\\
a_2 & \mapsto 
a_{13} + 1\\
a_{13} & \mapsto 
a_2 + 1\\
a_{23} & \mapsto 
a_1 + 1\\
c_1 & \mapsto 
c_1 + a_{13} + a_1 a_{13} + a_{23} + a_2 a_{23} 
\end{align*}
The eight $\cA$-module structures which satisfy 
$a_{13} = a_2 + 1$ and $a_{23} = a_1 + 1$ are self dual.
\end{theorem}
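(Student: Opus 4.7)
The plan is to exploit the explicit form of the duality isomorphism in Proposition~\ref{prop:Bdual} to transport an $\cA$-action on $\cB(2)$ to its dual. Given an action $*$ encoded by $(a_1,a_2,a_{13},a_{23},c_1)$, the dual action $*^D$ on $\cB(2)$ is defined by requiring $\theta$ to be $\cA$-linear, i.e.\ $\theta(a*^D m) = a\cdot\theta(m)$, where the action on $D\cB(2)$ is $(a\cdot f)(n) = f(\chi(a)*n)$. Equivalently, the coefficient of a Milnor basis element $Sq(P)$ in $Sq^a *^D Sq(R)$ equals the coefficient of $Sq(7-R)$ in $\chi(Sq^a) * Sq(7-P)$, where $7-R$ denotes termwise complement with respect to $(7,3)$.

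First I would compute $\chi(Sq^8)$ and $\chi(Sq^{16})$ using the standard recursion $\chi(Sq^a) = \sum_{j=1}^{a} Sq^j\,\chi(Sq^{a-j})$ (referenced as Appendix~\ref{codedual}), expanded in the Milnor basis. Each such $\chi(Sq^a)$ decomposes as a sum of a Milnor basis element $Sq^a$ (whose action is the one we are trying to read off, parameterized by the $a_i$'s and $c_1$) plus lower terms in $\cA(2)$ (whose action on $\cB(2)$ is already known from the $\cA(2)$ structure). Next, I would evaluate $Sq^8 *^D(-)$ and $Sq^{16} *^D(-)$ on exactly those Milnor basis elements whose coefficients appear in Theorem~\ref{thm:Baction}.(2) and (3). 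For each of the five coordinates, this amounts to identifying one matrix entry of the new action, which under $\theta$ corresponds to one entry of the original $Sq^a$-matrix plus a correction coming from the $\cA(2)$-part of $\chi(Sq^a)$. This last correction is what produces the constants and the quadratic cross-terms in the displayed formulas (e.g.\ the $+1$'s in $a_1\mapsto a_{23}+1$, $a_2\mapsto a_{13}+1$, $a_{13}\mapsto a_2+1$, $a_{23}\mapsto a_1+1$, and the bilinear correction $a_1a_{13} + a_2a_{23}$ in the image of $c_1$).

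For the self-dual count, I would impose $D = \mathrm{id}$ on all five coordinates. The two involutive pairs $a_1 \leftrightarrow a_{23}+1$ and $a_2 \leftrightarrow a_{13}+1$ immediately force $a_{13} = a_2+1$ and $a_{23} = a_1+1$. Substituting these into the formula
\[
D(c_1) = c_1 + a_{13} + a_1 a_{13} + a_{23} + a_2 a_{23},
\]
and simplifying over $\F_2$, one checks that the correction $a_{13}+a_1a_{13}+a_{23}+a_2a_{23}$ vanishes identically, so no further restriction is placed on $c_1$. Since $a_1,a_2,c_1$ are then free in $\F_2$, this gives $2^3 = 8$ self-dual $\cA$-module structures, as claimed.

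The main obstacle is the bookkeeping needed to produce the exact expressions in the theorem: while $\chi(Sq^8)$ and $\chi(Sq^{16})$ are finite sums of Milnor basis elements, each of their $\cA(2)$-components acts through the already-known $\cA(2)$-structure on $\cB(2)$ and contributes additively to the $a_i$ and $c_1$ entries under $\theta$. Ensuring that these contributions combine to the stated affine-plus-quadratic formulas is best verified mechanically; this matches the approach used for Theorems~\ref{thm:dualsymmetric} and \ref{thm:dualgeneral} and can be cross-checked against the independent dualization computed by \texttt{dualizeDef} in \texttt{ext}, as noted in the earlier remark.
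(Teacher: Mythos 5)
Your proposal is correct and follows essentially the same route the paper takes: compute $\chi(Sq^8)$ and $\chi(Sq^{16})$ by the standard recursion, transport the action along the explicit isomorphism $\theta$ of Proposition~\ref{prop:Bdual} (so the coefficient of $Sq(P)$ in $Sq^a$ acting dually on $Sq(R)$ is read off from $\chi(Sq^a)$ acting on the complementary basis elements), and extract the five coordinates of Theorem~\ref{thm:Baction}; this is exactly what the MAGMA code in Appendix~\ref{codedual} mechanizes. Your derivation of the self-dual count, including the verification that the correction term in $D(c_1)$ vanishes identically once $a_{13}=a_2+1$ and $a_{23}=a_1+1$ are imposed, is also the intended argument.
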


Finally, we end this section with some remarks on the surprisingly simple
Propositions~\ref{prop:Adual} and
\ref{prop:Bdual}.

\begin{remark}
My first proof of these two propositions was a hand calculation
of the fact 
that defining $\theta(Sq(r_1,r_2,r_3)) = Sq(7-r_1,3-r_2,1-r_3)^*$
resulted in an $\cA(2)$-module {\em homomorphism}, since it is clearly an
$\F_2$-vector space {\em isomorphism}.   To check that
it is an $\cA(2)$-module homomorphism, it suffices to check
that it preserves the action by $Sq^1$, $Sq^2$ and $Sq^4$.  To check that,
we may check that both sides produce the same result when evaluated
on any $Sq(R'')$.  This amounts to checking that, under the assumption
that $R' + R'' = (7,3,1)$ or $(7,3)$, a dozen or so combinatorial
identities hold.
\end{remark}

The form of these isomorphisms suggests the following conjecture.
It seems that it should be an elementary computation from the 
definition of the antipode.

\begin{conjecture}
\label{conj:dual}
Let $H$ be a connected finite dimensional Hopf algebra with antipode
of formal dimension $N$.
Let $\{x_\alpha\}$ and $\{y_\alpha\}$ be dual bases for $H$.
That is, if $\deg(x_\alpha)+\deg(y_\beta) = N$ then
$x_\alpha y_\beta = \delta_{\alpha,\beta} s$ where the
socle of $H$ is $\langle s \rangle$.
Then
\[
\psi(s) = \sum_\alpha x_\alpha \otimes \chi(y_\alpha)?
\]
Note that $\psi(s) = \psi(x_\alpha) \psi(y_\alpha)$ for each
$\alpha$, and that $\psi(x_\alpha)\psi(y_\beta) = 0$ if
$\alpha \neq \beta$.

Dually,
suppose that $\{x_\alpha\}$ and $\{y_\alpha\}$ are dual bases with
respect to the coproduct:  $\psi(s) = \sum_\alpha x_\alpha \otimes y_\alpha$.
Then
\[
x_\alpha \chi(y_\beta) = \delta_{\alpha,\beta} s
\]
when $\deg(x_\alpha)+\deg(y_\beta) = N$?
\end{conjecture}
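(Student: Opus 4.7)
My plan is to recognize the conjecture as a version of a classical integral formula in Hopf algebra theory and to prove it by a two-step reduction. The two assertions are logically equivalent given bijectivity of $\chi$: applying the first statement to the pair $\{x_\alpha\}, \{\chi^{-1}(y_\alpha)\}$ (which is dual via multiplication by hypothesis) produces $\psi(s) = \sum x_\alpha \otimes y_\alpha$, and uniqueness of the expansion forces $\chi^{-1}(y_\alpha)$ to be the multiplicative dual of $x_\alpha$. In the cocommutative setting relevant here, where $\chi^2 = \mathrm{id}$, this yields the second statement from the first, so I concentrate on the first.

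\emph{Reduction to a single identity.} Pairing the proposed equality $\psi(s) = \sum_\alpha x_\alpha \otimes \chi(y_\alpha)$ against Kronecker-dual functionals and using that multiplication in $H^*$ is dual to $\psi$, I will reduce the conjecture to the identity
\[
\chi(z) = \sum_{(s)} s^*(s_{(1)}\, z)\, s_{(2)} \qquad \text{for all } z \in H,
\]
where $\psi(s) = \sum s_{(1)} \otimes s_{(2)}$ and $s^*$ picks out the socle coefficient. The translation uses the non-degenerate Frobenius form $\beta(u,v) = s^*(uv)$: since $x_\alpha y_\beta = \delta_{\alpha\beta} s$, the Kronecker functional $x_\beta^*$ equals $w \mapsto s^*(w\, y_\beta)$, and the pairing collapses to $\chi(y_\beta) = \sum s^*(s_{(1)}\, y_\beta)\, s_{(2)}$; since $\{y_\beta\}$ is a basis, this is equivalent to the displayed identity on all of $H$. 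Denote the right-hand side by $\Theta(z)$.

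\emph{Two-way evaluation.} To prove $\Theta = \chi$, I will evaluate the triple sum
\[
X = \sum_{(s),(a)} s^*(s_{(1)}\, a_{(1)})\, s_{(2)}\, a_{(2)}\, \chi(a_{(3)})
\]
in two ways. Applying $\Theta \otimes \mathrm{id}$ to the Hopf-algebra identity $\sum_{(a)} a_{(1)} \otimes a_{(2)} \chi(a_{(3)}) = a \otimes 1$ (from coassociativity and the antipode axiom) and then multiplying in $H$ gives $X = \Theta(a)$. On the other hand, using coassociativity to regard $(a_{(1)}, a_{(2)})$ as the coproduct of $a'$ (where $\psi(a) = \sum a' \otimes a''$), and applying the right-integral identity $\sum s^*(s_{(1)} b_{(1)})\, s_{(2)}\, b_{(2)} = \epsilon(b)$ with $b = a'$---which holds because $s$ lies in the top degree, so $sb = \epsilon(b)s$ and hence $\psi(s)\psi(b) = \epsilon(b)\psi(s)$ after applying $s^* \otimes \mathrm{id}$---gives $X = \sum \epsilon(a')\, \chi(a'') = \chi(a)$. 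Equating the two expressions yields $\Theta(a) = \chi(a)$.

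\emph{Expected obstacle.} The delicate part is the reduction step, which requires careful bookkeeping with the two orientations of the Frobenius pairing and the identifications of dual bases. Once the reduction is in place, the main identity drops out of the Hopf-algebra axioms with essentially no calculation. The resulting identity $\chi(z) = \Theta(z)$ exhibits $\chi$ as the inverse of the Larson--Sweedler Fourier isomorphism $H^* \to H$ attached to the integral $s$, suggesting that the conjecture is essentially the classical Radford integral formula specialised to the connected graded setting---both a sanity check and an alternative route to a proof.
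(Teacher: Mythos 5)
First, a point of orientation: the paper does not prove this statement --- it is left as a conjecture, with only the remark that it ``should be an elementary computation from the definition of the antipode'' --- so there is no proof of the author's to compare yours against, and your proposal stands as a proposed resolution. For the first assertion your argument is correct. The reduction is sound: $s^*(x_\alpha y_\beta)=\delta_{\alpha\beta}$ says exactly that the Kronecker functional $x_\beta^*$ is $w\mapsto s^*(wy_\beta)$, and applying $x_\beta^*\otimes \mathrm{id}$ to both sides shows the claim is equivalent to $\Theta=\chi$ for $\Theta(z)=\sum s^*(s_{(1)}z)\,s_{(2)}$. Your identity $\sum s^*(s_{(1)}b_{(1)})\,s_{(2)}b_{(2)}=\epsilon(b)1$ is precisely the statement that $\Theta$ is a left convolution inverse of $\mathrm{id}_H$; it follows from $sb=\epsilon(b)s$ (degree reasons) together with $\sum s^*(s_{(1)})s_{(2)}=1$, which you use tacitly and should state: connectedness forces the $H_N\otimes H_0$ component of $\psi(s)$ to be $s\otimes 1$. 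The two-way evaluation of $X$ is then the standard ``left inverse equals right inverse'' argument in the convolution algebra and is valid. Identifying the conjecture with the classical integral formula ($s$ is a two-sided integral in $H$, $s^*$ a two-sided integral in $H^*$) is exactly the right way to see it.

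The gap is in the second assertion. Your stated route to it is circular as phrased (that $\{x_\alpha\},\{\chi^{-1}(y_\alpha)\}$ are ``dual via multiplication by hypothesis'' is the conclusion, not the hypothesis), and, more importantly, what actually follows from $\Theta=\chi$ is $x_\alpha\,\chi^{-1}(y_\beta)=\delta_{\alpha\beta}s$. Concretely, given $\psi(s)=\sum x_\alpha\otimes y_\alpha$, the claim $s^*(x_\alpha\chi(y_\beta))=\delta_{\alpha\beta}$ for all $\alpha,\beta$ is equivalent to $\Theta(\chi(v))=v$ for all $v$, i.e.\ to $\chi^2=\mathrm{id}$. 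So the second half as literally written is not a consequence of the first for an arbitrary connected finite-dimensional Hopf algebra: it holds if and only if $\chi^2=\mathrm{id}$, which covers the commutative and cocommutative cases (in particular $\cA(2)$ and $\cB(2)$) but is not automatic in the generality in which the conjecture is posed. You note the need for $\chi^2=\mathrm{id}$, but you should make explicit that this is the exact obstruction rather than an artifact of your argument, and that the clean general statement replaces $\chi$ by $\chi^{-1}$ in the second display.
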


\begin{remark}
If not a general fact about the antipode in a Hopf algebra, perhaps
Propositions~\ref{prop:Adual} and \ref{prop:Bdual} are examples of a general
polynomial identity.
Let $(n_1, \ldots, n_k)$ be a sequence of non-negative integers, 
let $P = \F_2[\xi_1, \ldots,\xi_k]$ and let
$I = (\xi_1^{2^{n_1}}, \ldots, \xi_k^{2^{n_k}})$.  Interpret $\xi_0$ as $1 \in P$.
Define $\psi : P \longto P \otimes P$ by
\[
\psi(\xi_j) = \sum_{i=0}^{j} \xi_{j-i}^{2^i} \otimes \xi_i
\]
and define $\chi : P \longto P$ by 
\[
\chi(\xi_j) = \sum_{i=0}^{j-1} \xi_{j-i}^{2^i} \otimes \chi(\xi_i),
\]
so that 
\[
\sum_{i=0}^{j} \xi_{j-i}^{2^i} \otimes \chi(\xi_i) = 0.
\]

What are the conditions on the sequence
$(n_1, \ldots, n_k)$ which ensure that 
\begin{align*}
\psi(\xi_1^{2^{n_1}-1}\cdots \xi_k^{2^{n_k}-1}) & =
 \sum_{R' + R'' = (2^{n_1}-1,\ldots,2^{n_k}-1)} \chi(\xi(R')) \otimes \xi(R'') \\
 & = \sum_{R' + R'' = (2^{n_1}-1,\ldots,2^{n_k}-1)} \xi(R') \otimes \chi(\xi(R''))
\end{align*}
modulo $I$?   It is not  necessary that it be a valid profile
for a sub Hopf algebra of the Steenrod algebra since it is true for
both the valid profile 
$(3,2,1)$ and the invalid profile $(3,2)$.  It is false
for the invalid profile $(3,0,1)$.
\end{remark}

\section{Actions in the literature}
\label{sec:literature}

\begin{proposition}
\label{prop:BBBCX}
The five author paper~\cite{BBBCX} uses the symmetric $\cA$-module structure on
$\cA(2)$ in which all of $a_1$, $ a_2$, $ a_{13}$, $ a_{23}$, $ b_1$, $ 
c_1$, $ d_1$, $ d_2$, $ d_3$ are zero.  
The dual $\cA$-module structure has
$a_1 = a_2 = a_{13} = a_{23} =  d_1 = d_2 = 1$ and
$b_{1} = c_1 = d_3 = 0$.
\end{proposition}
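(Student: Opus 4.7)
The plan has two parts. First, I identify the parameters of the symmetric action used in \cite{BBBCX}; second, I apply Theorem~\ref{thm:dualsymmetric} at the origin.

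For the first part, I locate in \cite{BBBCX} the explicit formulas (or, equivalently, the cell diagram / extension chart) that specify how $Sq^8$ and $Sq^{16}$ act on the bottom cell and on the relevant elements in low degree. By Theorem~\ref{thm:symmetric}.(2)--(3), the nine coordinates $a_1,a_2,a_{13},a_{23},b_1,c_1,d_1,d_2,d_3$ are determined by, and determine, the coefficients of $Sq(5,1)$, $Sq(2,2)$, $Sq(1,0,1)$ in $Sq^8\cdot 1$; the coefficient of $Sq(6,2)$ in $Sq^8\cdot Sq^4$; the coefficient of $Sq(5,3)$ in $Sq^8\cdot Sq(0,2)$; and the four coefficients in $Sq^{16}\cdot 1 = c_1 Sq(7,3)+d_1 Sq(6,1,1)+d_2 Sq(3,2,1)+d_3 Sq(0,3,1)$. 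Matching the formulas in \cite{BBBCX} against this list verifies that each of these coefficients vanishes there, so the BBBCX action is the origin of $V_{\text{sym}}$.

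For the second part, I substitute $a_1=a_2=a_{13}=a_{23}=b_1=c_1=d_1=d_2=d_3=0$ into the duality map of Theorem~\ref{thm:dualsymmetric}. Every product term contains at least one of these variables, so all higher-degree contributions vanish, and only the constant terms survive. Reading off the affine parts of the formulas, we get $a_1\mapsto 1$, $a_2\mapsto 1$, $a_{13}\mapsto 1$, $a_{23}\mapsto 1$, $b_1\mapsto 0$, $c_1\mapsto 0$, $d_1\mapsto 1$, $d_2\mapsto 1$, and $d_3\mapsto 0$, which is precisely the claim.

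The only real obstacle is the first part, which is essentially bookkeeping: one must carefully translate \cite{BBBCX}'s conventions (basis, sign of the attaching element, possibly Milnor vs.\ admissible basis) into the Milnor-basis coordinates fixed in Theorem~\ref{thm:symmetric}. Once that translation is fixed, the second part is a mechanical substitution into the closed-form duality map, which can be double-checked against the \texttt{dualizeDef} output of \texttt{ext} as was done in the remark following Theorem~\ref{thm:dualsymmetric}.
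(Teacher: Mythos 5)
Your proposal is correct and follows essentially the same route as the paper: read the nine coordinates off the \texttt{moddef} data of \cite{BBBCX} via Theorem~\ref{thm:symmetric}.(2)--(3) (which in practice requires converting $Sq(6,2)$ and $Sq(5,3)$ to admissible form and identifying the generators $g_{32}$ and $g_{41}$ in degrees $12$ and $14$), and then substitute the origin into Theorem~\ref{thm:dualsymmetric}, where only the constant terms $+1$ in the images of $a_1,a_2,a_{13},a_{23},d_1,d_2$ survive. The only difference is that you leave the first half as a plan, whereas the paper actually carries out the basis translation; your reading of the duality formulas in the second half is exactly right.
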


\begin{proposition}
\label{prop:BE}
The paper~\cite{BE} uses the $\cA$ action on $\cB(2)$ in which
all of $a_1$, $ a_2$, $ a_{13}$, $ a_{23}$ and $c_1$  are zero.  
This $\cA$-module sits in the short exact sequence~\eqref{Qext} 
with the $\cA$ action on $\cA(2)$ of~\cite{BBBCX}.
\end{proposition}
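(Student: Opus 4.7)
The plan is to identify the BE action on $\cB(2)$ explicitly via the five defining coordinates of Theorem~\ref{thm:Baction}, and then to verify compatibility with the BBBCX action on $\cA(2)$ by tracing the origin of $V_{\text{sym}}$ through the maps of Diagram~\eqref{diagram:D}.

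First, I would extract from \cite{BE} the formulas giving $Sq^8\cdot 1$, $Sq^8\cdot Sq^4$, $Sq^8\cdot Sq(2,2)$, and $Sq^{16}\cdot 1$ on the generator of $\cB(2)$, rewrite each in the Milnor basis, and read off the five coordinates: $a_1$ (coefficient of $Sq(5,1)$ in $Sq^8\cdot 1$), $a_2$ (coefficient of $Sq(2,2)$ in the same), $a_{13}$ (coefficient of $Sq(6,2)$ in $Sq^8\cdot Sq^4$), $a_{23}$ (coefficient of $Sq(7,3)$ in $Sq^8\cdot Sq(2,2)$), and $c_1$ (coefficient of $Sq(7,3)$ in $Sq^{16}\cdot 1$). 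The goal is to check that all five vanish.

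Second, I would verify the compatibility assertion as follows. By Proposition~\ref{prop:BBBCX}, the BBBCX module sits at the origin of $V_{\text{sym}}$. Under the inclusion $V_{\text{sym}}\hookrightarrow V_{\text{gen}}$ of Theorem~\ref{thm:symtogen}, the origin maps to the point of $V_{\text{gen}}$ in which every general-case coordinate is zero except $a_{48}$, whose image is $b_{25}=a_{1}a_{2}a_{13}+a_{1}a_{13}+a_{2}a_{13}b_{1}+a_{2}+a_{13}b_{1}+a_{13}+1$, which evaluates to $1$ when every symmetric coordinate is zero. Since $a_{60}=0$, this lies in $V_{\cQ}$ by Theorem~\ref{thm:VQ}. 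Applying the projection $q$ of Theorem~\ref{thm:gentoB}(1), whose coordinate map sends $a_1,a_2,a_{13},a_{23},c_1$ to $a_1,a_2,a_{21},a_{47},b_1$ respectively, I get the point of $V_{\cB}$ with all five coordinates zero. If step one has confirmed that this is indeed the action used in \cite{BE}, then the BE $\cA$-module on $\cB(2)$ is exactly the quotient $qA_{\text{BBBCX}}$, so the short exact sequence $(\mathcal{Q})$ is realized with the BBBCX action on $\cA(2)$ as the middle term and the BE action on $\cB(2)$ as the quotient.

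The main obstacle is the first step, which is essentially a translation problem: the action in \cite{BE} is presumably given in a basis other than the Milnor basis (e.g. admissible monomials or low-degree relations among generators), so care is needed to convert and identify each of the five coefficients. Once that translation is done, the algebraic verification collapses to substituting zeros into the homomorphisms of Theorems~\ref{thm:symtogen} and~\ref{thm:gentoB}(1), which is routine.
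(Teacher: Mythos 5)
Your proposal is correct and follows essentially the same route as the paper: the five coordinates are read off from the module definition file of \cite{BE} (where $Sq^8$ and $Sq^{16}$ annihilate the generator and its products with $Sq^4$ and $Sq(2,2)$), and the compatibility with the \cite{BBBCX} action amounts to the fact that the \cite{BE} module is the quotient $q$ of the \cite{BBBCX} module. The only difference is that where the paper dismisses the second claim in one sentence by noting the parameters were chosen compatibly for $\cA(2)$ and $\cB(2)$, you verify it explicitly by pushing the origin of $V_{\text{sym}}$ through Theorems~\ref{thm:symtogen} and \ref{thm:gentoB}(1) (correctly noting $a_{48}\mapsto 1$ and $a_{60}\mapsto 0$ along the way), which is a slightly more self-contained justification of the same fact.
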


In order to determine the parameters associated to the $\cA$-module
structures used in \cite{BBBCX} and \cite{BE}, we need to relate the
Milnor basis used here to the generators used in their {\tt moddef} 
(module definition) files.
One part of this process is to write the Milnor basis elements
in admissible form, e.g., by using the {\tt sage} code for the Steenrod algebra,
as in
\begin{verbatim}
sage: A = SteenrodAlgebra(p=2)
sage: Sq(0,2).change_basis('Adem')
Sq^4 Sq^2 + Sq^5 Sq^1 + Sq^6.
\end{verbatim}
Let us write $g_i$ for generator number $i$ in a  {\tt moddef} file.
These generators are ordered first by degree, then by reverse lexicographic order:
\[
g_0 = Sq(0),\, g_1 = Sq(1),\, g_2 = Sq(2),\, g_3 = Sq(3),\, g_4 = Sq(0,1),\,
g_5 = Sq(4),\, g_6 = Sq(1,1),\,
\ldots .
\]
Ordering by degree is required by {\tt ext}, but the choice of ordering
within each degree is chosen by the person writing the {\tt moddef} file.
From Theorem~\ref{thm:symmetric}, we see that for Proposition~\ref{prop:BBBCX}
we only need to verify this ordering in degrees 12 and 14. In degree 12,
the {\tt moddef} file in \cite{BBBCX} gives
\begin{align*}
Sq(6,2)\cdot g_0 & = (Sq^{10}Sq^2 + Sq^{11}Sq^1)  \cdot g_0 &= g_{32}, \\
Sq(3,3)\cdot g_0 & = (Sq^{9}Sq^3 )  \cdot g_0 &= g_{33}, \\
Sq(5,0,1)\cdot g_0 & = (Sq^{9}Sq^2Sq^1 + Sq^9 Sq^3 + Sq^{11}Sq^1)  \cdot g_0 &= g_{34}, \\
Sq(2,1,1)\cdot g_0 & = (Sq^{8}Sq^3Sq^1 + Sq^9 Sq^2Sq^1 + Sq^9 Sq^3 ) \cdot g_0 &= g_{35},
\end{align*}
while in degree 14, it gives
\begin{align*}
Sq(5,3)\cdot g_0 & = (Sq^{11}Sq^3 + Sq^{13}Sq^1)  \cdot g_0 &= g_{41}, \\
Sq(7,0,1)\cdot g_0 & = (Sq^{11}Sq^2Sq^1 + Sq^{11}Sq^3 + Sq^{13}Sq^1)  \cdot g_0 &= g_{42}, \\
Sq(4,1,1)\cdot g_0 & = (Sq^{10}Sq^3Sq^1 + Sq^{11}Sq^2Sq^1 + Sq^{11}Sq^3)  \cdot g_0 &= g_{43}, \\
Sq(1,2,1)\cdot g_0 & = (Sq^{9}Sq^4Sq^1 + Sq^{11}Sq^2Sq^1 + Sq^{13}Sq^1)  \cdot g_0 &= g_{44}.
\end{align*}
We are now in a position to carry out the proofs.

\begin{proof}[Proof of Proposition~\ref{prop:BBBCX}]
First, $Sq^8 \cdot 1 = Sq^8 \cdot g_0 = 0$ implies that $a_1=a_2=b_1 = 0$.
Then, $Sq^{16} \cdot 1 = Sq^{16} \cdot g_0 = 0$ implies that
$c_1 = d_1 = d_2 = d_3 = 0$.  Next,
$Sq^8\cdot Sq^4 = a_{13} Sq(6,2) + \cdots = a_{13} g_{32} + \cdots$, while
$Sq^8 \cdot g_5 = g_{34}  + g_{35}$ gives $a_{13} = 0$.
Finally,   $Sq^8 \cdot Sq(0,2) = a_{23} Sq(5,3) + \cdots
= a_{23} g_{41} + \cdots$ while $Sq^8 \cdot g_{11} = g_{42} + g_{43}$
gives $a_{23} = 0$.  Here we have used the additional information from
the {\tt moddef} file that $Sq^4 \cdot g_0 = g_5$ and
$Sq(0,2) \cdot g_0 = (Sq^6 + Sq^5 Sq^1 + Sq^4 Sq^2)\cdot g_0 = g_{11}$.

The dual $\cA$-module structure is then given by Theorem~\ref{thm:dualsymmetric}.
\end{proof}

The proof of Proposition~\ref{prop:BE} is much easier, this time using
Theorem~\ref{thm:Baction} to determine the coefficients.

\begin{proof}[Proof of Proposition~\ref{prop:BE}]
First,  $Sq^8 \cdot 1 = Sq^8 \cdot g_0 = 0$ gives $a_1 = a_2 = 0$.
Then, $Sq^{16} \cdot 1 = Sq^{16} \cdot g_0 = 0$ gives $c_1 = 0$.
Finally, $Sq^8 \cdot Sq^4  = Sq^8 \cdot Sq^4 g_0 = Sq^8 \cdot g_5 = 0$
gives $a_{13}  = 0$,
and  $Sq^8\cdot Sq(2,2) = Sq^8 \cdot g_{16} = 0$
gives $a_{23} = 0$.   Here we use that
$Sq(2,2) \cdot g_0 = (Sq^6Sq^2 + Sq^7Sq^1) \cdot g_0 = g_{16}$.

The parameters $a_i$, etc., are chosen compatibly in the calculations for
$\cA(2)$ and $\cB(2)$, so the final claim follows from the fact that they 
are all zero in the $\cA$-module structures used in the two papers.
\end{proof}

\appendix

\vspace{10ex}

\section{$Sq^8$ in the Symmetric Case}
\label{sq8}

The initial linear transformation $Sq^8$ in the symmetric case
is given below, to show how the
block structure appears in the MAGMA code.  
The omitted matrices at the end all map to the 0 vector space.

MAGMA represents vector
space homomorphisms by the matrix which gives their action on the right.
Hence each row represents the value of the transformation on a
corresponding basis element.  

For example, the action of $Sq^8$ on degree 0 is
given by the first matrix below:  $Sq^8 \cdot 1 =
a_1 Sq(5,1) + a_2 Sq(2,2) + b_1 Sq(1,0,1)$.   

For a second example, 
the action of $Sq^8$ on degree $7$ is given by the eighth matrix below.
The ordered bases we use for $A_7$  and $A_{15}$ are
\begin{itemize}
\item
for $A_7$, $(Sq(7), Sq(4,1), Sq(1,2), Sq(0,0,1))$, and
\item
for $A_{15}$,  $(Sq(6,3), Sq(5,1,1), Sq(2,2,1))$.  
\end{itemize}
Hence the matrix below says that
\begin{align*}
Sq^8 \cdot Sq(7) &= a_{24} Sq(6,3) + b_{28} Sq(5,1,1) + b_{29} Sq(2,2,1), \\
Sq^8 \cdot Sq(4,1) &= a_{25} Sq(6,3) + b_{30} Sq(5,1,1) + b_{31} Sq(2,2,1), \\
Sq^8 \cdot Sq(1,2) &= a_{26} Sq(6,3) + b_{32} Sq(5,1,1) + b_{33} Sq(2,2,1),  \\
Sq^8 \cdot Sq(0,0,1) &= \phantom{a_{24} Sq(6,3) +} a_{1} Sq(5,1,1) + a_{2} Sq(2,2,1). \\
\end{align*}

\begin{verbatim}
Initial Sq^8:
[*
[a1 a2 b1],

[a3 a4 a5 b2],

[a6 a7 a8 b3 b4],

[ a9 a10  b5  b6]
[a11 a12  b7  b8],

[a13 a14  b9 b10]
[a15 a16 b11 b12],

[a17 a18 b13 b14 b15]
[a19 a20 b16 b17 b18],

[a21 b19 b20 b21]
[a22 b22 b23 b24]
[a23 b25 b26 b27],

[a24 b28 b29]
[a25 b30 b31]
[a26 b32 b33]
[  0  a1  a2],

[a27 b34 b35 b36]
[a28 b37 b38 b39]
[  0  a3  a4  a5],

[b40 b41 b42]
[b43 b44 b45]
[b46 b47 b48]
[ a6  a7  a8],

[b49 b50]
[b51 b52]
[b53 b54]
[ a9 a10]
[a11 a12],

[b55 b56]
[b57 b58]
[a13 a14]
[a15 a16],

[b59 b60]
[b61 b62]
[a17 a18]
[a19 a20],

[b63]
[b64]
[a21]
[a22]
[a23],

[b65]
[a24]
[a25]
[a26],

[b66]
[a27]
[a28],

Matrices with 4 rows and 0 columns, 
...
*]
\end{verbatim}

\section{$Sq^{16}$ in the Symmetric Case}
\label{sq16}

\begin{verbatim}

Initial Sq^16:
[*
[c1 d1 d2 d3],

[d4 d5 d6],

[d7 d8],

[ d9 d10]
[d11 d12],

[d13 d14]
[d15 d16],

[d17]
[d18],

[d19]
[d20]
[d21],

[d22]
[d23]
[d24]
[ c1],

Matrix with 3 rows and 0 columns, 
...
*]
\end{verbatim}

\section{$Sq^8$ in the General Case}
\label{gensq8}

\begin{verbatim}
Initial Sq^8:
[*
[a1 a2 a3],

[a4 a5 a6 a7],

[ a8  a9 a10 a11 a12],

[a13 a14 a15 a16]
[a17 a18 a19 a20],

[a21 a22 a23 a24]
[a25 a26 a27 a28],

[a29 a30 a31 a32 a33]
[a34 a35 a36 a37 a38],

[a39 a40 a41 a42]
[a43 a44 a45 a46]
[a47 a48 a49 a50],

[a51 a52 a53]
[a54 a55 a56]
[a57 a58 a59]
[a60 a61 a62],

[a63 a64 a65 a66]
[a67 a68 a69 a70]
[a71 a72 a73 a74],

[a75 a76 a77]
[a78 a79 a80]
[a81 a82 a83]
[a84 a85 a86],

[a87 a88]
[a89 a90]
[a91 a92]
[a93 a94]
[a95 a96],

[ a97  a98]
[ a99 a100]
[a101 a102]
[a103 a104],

[a105 a106]
[a107 a108]
[a109 a110]
[a111 a112],

[a113]
[a114]
[a115]
[a116]
[a117],

[a118]
[a119]
[a120]
[a121],

[a122]
[a123]
[a124],

Matrix with 4 rows and 0 columns,
...
*]
\end{verbatim}

\section{$Sq^{16}$ in the General Case}
\label{gensq16}

\begin{verbatim}
Initial Sq^16:
[*
[b1 b2 b3 b4],

[b5 b6 b7],

[b8 b9],

[b10 b11]
[b12 b13],

[b14 b15]
[b16 b17],

[b18]
[b19],

[b20]
[b21]
[b22],

[b23]
[b24]
[b25]
[b26],

Matrix with 3 rows and 0 columns,
...
*]
\end{verbatim}

\section{The sixteen $Sq^8$ in the symmetric case}
\label{allsymSq8}

The following table gives the values of 
$[a_1, a_2, a_{13}, a_{23}, b_1]$ which solve the single
relation we found in the symmetric case.  
These were produced by the MAGMA code in Appendix~\ref{codesym3}.

\begin{verbatim}
       [a1,a2,a13,a23,b1]
   1 : [ 0, 0,  0,  0, 0]
   2 : [ 0, 0,  0,  1, 0]
   3 : [ 0, 0,  1,  0, 0]
   4 : [ 0, 0,  1,  1, 0]
   5 : [ 0, 1,  0,  1, 0]
   6 : [ 0, 1,  0,  1, 1]
   7 : [ 0, 1,  1,  0, 0]
   8 : [ 0, 1,  1,  1, 0]
   9 : [ 1, 0,  0,  0, 0]
  10 : [ 1, 0,  0,  1, 0]
  11 : [ 1, 0,  1,  0, 1]
  12 : [ 1, 0,  1,  1, 1]
  13 : [ 1, 1,  0,  0, 0]
  14 : [ 1, 1,  0,  0, 1]
  15 : [ 1, 1,  1,  0, 0]
  16 : [ 1, 1,  1,  1, 0]
\end{verbatim}

\section{The $100$ $Sq^8$ actions in the general case}
\label{allgenSq8}

\begin{verbatim}
       [ a1, a2, a3,a21,a47,a48,a60,a61,a62 ]
   1 : [  0,  0,  0,  0,  0,  0,  0,  0,  1 ]
   2 : [  0,  0,  0,  0,  0,  0,  0,  1,  1 ]
   3 : [  0,  0,  0,  0,  0,  0,  1,  1,  1 ]
   4 : [  0,  0,  0,  0,  0,  1,  0,  0,  0 ]
   5 : [  0,  0,  0,  0,  0,  1,  0,  0,  1 ]
   6 : [  0,  0,  0,  0,  0,  1,  0,  1,  0 ]
   7 : [  0,  0,  0,  0,  0,  1,  0,  1,  1 ]
   8 : [  0,  0,  0,  0,  1,  0,  1,  0,  1 ]
   9 : [  0,  0,  0,  0,  1,  0,  1,  1,  0 ]
  10 : [  0,  0,  0,  0,  1,  1,  0,  0,  0 ]
  11 : [  0,  0,  0,  0,  1,  1,  0,  1,  0 ]
  12 : [  0,  0,  0,  0,  1,  1,  1,  0,  0 ]
  13 : [  0,  0,  0,  1,  0,  0,  0,  0,  0 ]
  14 : [  0,  0,  0,  1,  0,  0,  0,  0,  1 ]
  15 : [  0,  0,  0,  1,  0,  0,  1,  0,  0 ]
  16 : [  0,  0,  0,  1,  0,  0,  1,  0,  1 ]
  17 : [  0,  0,  0,  1,  0,  0,  1,  1,  0 ]
  18 : [  0,  0,  0,  1,  0,  0,  1,  1,  1 ]
  19 : [  0,  0,  0,  1,  1,  0,  0,  0,  0 ]
  20 : [  0,  0,  0,  1,  1,  0,  0,  1,  1 ]
  21 : [  0,  0,  0,  1,  1,  0,  1,  0,  0 ]
  22 : [  0,  0,  0,  1,  1,  0,  1,  0,  1 ]
  23 : [  0,  0,  0,  1,  1,  0,  1,  1,  0 ]
  24 : [  0,  0,  0,  1,  1,  0,  1,  1,  1 ]
  25 : [  0,  0,  1,  0,  0,  0,  0,  0,  1 ]
  26 : [  0,  0,  1,  0,  0,  0,  0,  1,  1 ]
  27 : [  0,  0,  1,  0,  0,  0,  1,  0,  0 ]
  28 : [  0,  0,  1,  0,  0,  1,  0,  0,  1 ]
  29 : [  0,  0,  1,  0,  0,  1,  0,  1,  1 ]
  30 : [  0,  0,  1,  0,  0,  1,  1,  1,  0 ]
  31 : [  0,  0,  1,  1,  0,  0,  1,  0,  0 ]
  32 : [  0,  0,  1,  1,  0,  0,  1,  1,  0 ]
  33 : [  0,  0,  1,  1,  0,  1,  0,  0,  1 ]
  34 : [  0,  0,  1,  1,  1,  0,  1,  0,  0 ]
  35 : [  0,  0,  1,  1,  1,  0,  1,  1,  0 ]
  36 : [  0,  0,  1,  1,  1,  1,  0,  1,  1 ]
  37 : [  0,  1,  0,  0,  1,  0,  0,  0,  1 ]
  38 : [  0,  1,  0,  0,  1,  1,  1,  0,  0 ]
  39 : [  0,  1,  0,  0,  1,  1,  1,  1,  0 ]
  40 : [  0,  1,  0,  1,  0,  0,  1,  0,  0 ]
  41 : [  0,  1,  0,  1,  0,  1,  0,  0,  1 ]
  42 : [  0,  1,  0,  1,  0,  1,  0,  1,  1 ]
  43 : [  0,  1,  0,  1,  0,  1,  1,  0,  1 ]
  44 : [  0,  1,  0,  1,  1,  0,  1,  1,  0 ]
  45 : [  0,  1,  0,  1,  1,  1,  0,  0,  1 ]
  46 : [  0,  1,  0,  1,  1,  1,  0,  1,  1 ]
  47 : [  0,  1,  1,  0,  1,  0,  0,  0,  1 ]
  48 : [  0,  1,  1,  1,  0,  0,  0,  0,  0 ]
  49 : [  0,  1,  1,  1,  0,  0,  0,  1,  0 ]
  50 : [  0,  1,  1,  1,  0,  0,  1,  0,  0 ]
  51 : [  0,  1,  1,  1,  0,  1,  1,  1,  1 ]
  52 : [  0,  1,  1,  1,  1,  0,  1,  1,  0 ]
  53 : [  1,  0,  0,  0,  0,  0,  1,  0,  1 ]
  54 : [  1,  0,  0,  0,  0,  0,  1,  1,  0 ]
  55 : [  1,  0,  0,  0,  0,  1,  0,  0,  0 ]
  56 : [  1,  0,  0,  0,  0,  1,  0,  1,  0 ]
  57 : [  1,  0,  0,  0,  0,  1,  1,  0,  0 ]
  58 : [  1,  0,  0,  0,  1,  0,  0,  0,  1 ]
  59 : [  1,  0,  0,  0,  1,  0,  0,  1,  1 ]
  60 : [  1,  0,  0,  0,  1,  0,  1,  1,  1 ]
  61 : [  1,  0,  0,  0,  1,  1,  0,  0,  0 ]
  62 : [  1,  0,  0,  0,  1,  1,  0,  0,  1 ]
  63 : [  1,  0,  0,  0,  1,  1,  0,  1,  0 ]
  64 : [  1,  0,  0,  0,  1,  1,  0,  1,  1 ]
  65 : [  1,  0,  1,  0,  1,  0,  0,  0,  1 ]
  66 : [  1,  0,  1,  0,  1,  0,  0,  1,  1 ]
  67 : [  1,  0,  1,  0,  1,  0,  1,  0,  0 ]
  68 : [  1,  0,  1,  0,  1,  1,  0,  0,  1 ]
  69 : [  1,  0,  1,  0,  1,  1,  0,  1,  1 ]
  70 : [  1,  0,  1,  0,  1,  1,  1,  1,  0 ]
  71 : [  1,  0,  1,  1,  0,  0,  0,  1,  0 ]
  72 : [  1,  0,  1,  1,  0,  1,  1,  0,  1 ]
  73 : [  1,  0,  1,  1,  0,  1,  1,  1,  1 ]
  74 : [  1,  0,  1,  1,  1,  0,  0,  1,  0 ]
  75 : [  1,  0,  1,  1,  1,  1,  1,  0,  1 ]
  76 : [  1,  0,  1,  1,  1,  1,  1,  1,  1 ]
  77 : [  1,  1,  0,  0,  0,  0,  0,  0,  0 ]
  78 : [  1,  1,  0,  0,  0,  0,  0,  1,  1 ]
  79 : [  1,  1,  0,  0,  0,  1,  0,  0,  0 ]
  80 : [  1,  1,  0,  0,  0,  1,  1,  0,  0 ]
  81 : [  1,  1,  0,  0,  0,  1,  1,  1,  0 ]
  82 : [  1,  1,  0,  0,  1,  0,  0,  1,  0 ]
  83 : [  1,  1,  0,  0,  1,  1,  0,  1,  0 ]
  84 : [  1,  1,  0,  1,  0,  0,  1,  1,  0 ]
  85 : [  1,  1,  0,  1,  0,  1,  0,  0,  1 ]
  86 : [  1,  1,  0,  1,  0,  1,  0,  1,  1 ]
  87 : [  1,  1,  0,  1,  1,  0,  1,  0,  0 ]
  88 : [  1,  1,  0,  1,  1,  1,  0,  0,  1 ]
  89 : [  1,  1,  0,  1,  1,  1,  0,  1,  1 ]
  90 : [  1,  1,  0,  1,  1,  1,  1,  0,  1 ]
  91 : [  1,  1,  1,  0,  0,  0,  0,  1,  1 ]
  92 : [  1,  1,  1,  0,  0,  0,  1,  0,  1 ]
  93 : [  1,  1,  1,  0,  0,  0,  1,  1,  1 ]
  94 : [  1,  1,  1,  0,  1,  0,  1,  0,  1 ]
  95 : [  1,  1,  1,  0,  1,  0,  1,  1,  1 ]
  96 : [  1,  1,  1,  1,  0,  0,  1,  1,  0 ]
  97 : [  1,  1,  1,  1,  1,  0,  0,  0,  0 ]
  98 : [  1,  1,  1,  1,  1,  0,  0,  1,  0 ]
  99 : [  1,  1,  1,  1,  1,  0,  1,  0,  0 ]
 100 : [  1,  1,  1,  1,  1,  1,  1,  1,  1 ]
\end{verbatim}

\section{Resulting relations, symmetric case}
\label{symrelations}

Writing out the matrices defining $Sq^8$ and $Sq^{16}$ after
reducing to the minimal set of variables defining them is impractical
since some of the entries are rather lengthy sums, making the matrices
unreadable.   
Instead, we refer the reader to the initial versions with
all indeterminates in Appendices~\ref{sq8} and \ref{sq16},
and provide the definitions of those indeterminates in terms
of the $9$ variables that we have reduced to.

\begin{verbatim}

Writing each of the 119 variables in terms of the final 9 variables:
the entries are <index, name, value>.

[
    <1, d24, d1 + c1 + b1*a13 + b1 + a23*a13 + a13*a2*a1 + a13*a1 + a13 + a1>,
    <2, d23, d2 + c1 + b1*a13*a2 + b1*a2 + a23*a13*a2 + a23*a13 + a23*a2 + 
        a13*a2 + a13 + a1>,
    <3, d22, d3 + d2 + d1 + c1 + b1*a13 + b1 + a23*a2 + a13*a2*a1 + a13*a2 + 
        a13*a1 + a2>,
    <4, d21, d1 + c1 + b1*a2 + b1 + a13*a1 + 1>,
    <5, d20, 1>,
    <6, d19, d3 + d2 + d1 + c1 + b1*a13 + b1*a2 + b1 + a13*a1>,
    <7, d18, b1>,
    <8, d17, d3 + d1 + b1*a13*a2 + b1*a13 + b1*a2 + b1 + a23*a13*a2 + a23*a13 + 
        a13*a2 + a13 + a2 + a1>,
    <9, d16, 0>,
    <10, d15, a2 + 1>,
    <11, d14, d3 + d1 + b1*a13*a2 + b1*a13 + b1*a2 + b1 + a23*a13*a2 + a23*a13 +
        a13*a2 + a13 + a2 + a1>,
    <12, d13, d2 + c1 + b1*a13*a2 + b1*a2 + a23*a13*a2 + a23*a13 + a13*a2 + 
        a13*a1 + a13 + 1>,
    <13, d12, d2 + a2 + a1>,
    <14, d11, a2 + 1>,
    <15, d10, d3 + d2 + a2 + a1>,
    <16, d9, 0>,
    <17, d8, d3 + d2 + b1 + a1>,
    <18, d7, b1 + a2 + a1 + 1>,
    <19, d6, d3>,
    <20, d5, 0>,
    <21, d4, d1 + b1 + a1>,
    <22, d3, d3>,
    <23, d2, d2>,
    <24, d1, d1>,
    <25, c1, c1>,
    <26, b66, b1*a13 + b1 + a23*a13 + a23*a2 + a13*a2 + a13 + 1>,
    <27, b65, 1>,
    <28, b64, b1*a13 + b1 + a23*a13 + a23*a2 + a13*a2 + a13>,
    <29, b63, b1*a13 + b1 + a23*a13 + a23*a2 + a13*a2 + a13 + 1>,
    <30, b62, 0>,
    <31, b61, 1>,
    <32, b60, b1*a13 + b1 + a23*a13 + a23*a2 + a13*a2 + a13 + 1>,
    <33, b59, b1*a13 + b1 + a23*a13 + a23*a2 + a13*a2 + a13>,
    <34, b58, b1*a13*a2 + b1*a2 + a23*a13 + a23 + a13*a1 + a2 + a1 + 1>,
    <35, b57, 1>,
    <36, b56, b1*a13*a2 + b1*a13 + b1*a2 + b1 + a23*a13*a2 + a23*a13 + a2 + 1>,
    <37, b55, 0>,
    <38, b54, 1>,
    <39, b53, 1>,
    <40, b52, b1*a13*a2 + b1*a13 + b1*a2 + b1 + a23*a13*a2 + a23*a13 + a2 + 1>,
    <41, b51, b1*a13 + b1 + a23*a13 + a23*a2 + a13*a2 + a13>,
    <42, b50, 0>,
    <43, b49, 1>,
    <44, b48, b1*a13*a2 + b1*a2 + a23*a13 + a23 + a13*a1 + a2 + a1>,
    <45, b47, 1>,
    <46, b46, b1*a13*a2 + b1*a13 + a13*a2*a1 + a13*a1 + a13 + a2 + 1>,
    <47, b45, b1*a13*a2 + b1*a2 + a23*a13 + a23 + a13*a1 + a2 + a1 + 1>,
    <48, b44, 0>,
    <49, b43, b1*a13*a2 + a23*a13*a2 + a23*a2 + a23 + a13*a2 + a1>,
    <50, b42, 0>,
    <51, b41, 1>,
    <52, b40, b1*a2 + a23*a2 + a13*a2*a1 + a13*a2 + a13 + a2>,
    <53, b39, b1*a13*a2 + b1*a2 + a23*a13 + a23 + a13*a1 + a2 + a1 + 1>,
    <54, b38, b1*a13*a2 + b1*a2 + a23*a13 + a23 + a13*a1 + a2 + a1>,
    <55, b37, b1*a13*a2 + a23*a13*a2 + a23*a2 + a23 + a13*a2 + a1>,
    <56, b36, 0>,
    <57, b35, 1>,
    <58, b34, 1>,
    <59, b33, 0>,
    <60, b32, b1*a13 + a23*a13*a2 + a23*a2 + a23 + a13*a2*a1 + a13*a2 + a13*a1 +
        a13 + a2 + a1 + 1>,
    <61, b31, 1>,
    <62, b30, b1*a2 + a23*a2 + a13*a2*a1 + a13*a2 + a13 + a2 + 1>,
    <63, b29, 0>,
    <64, b28, b1*a2 + a23*a2 + a13*a2*a1 + a13*a2 + a13 + a2 + 1>,
    <65, b27, b1*a13*a2 + b1*a2 + a23*a13 + a23 + a13*a1 + a2 + a1>,
    <66, b26, b1*a13 + a23*a13*a2 + a23*a2 + a23 + a13*a2*a1 + a13*a2 + a13*a1 +
        a13 + a2 + a1 + 1>,
    <67, b25, b1*a13*a2 + b1*a13 + a13*a2*a1 + a13*a1 + a13 + a2 + 1>,
    <68, b24, 1>,
    <69, b23, 0>,
    <70, b22, 1>,
    <71, b21, 0>,
    <72, b20, b1*a2 + a23*a2 + a13*a2*a1 + a13*a2 + a13 + a2 + 1>,
    <73, b19, b1*a2 + a23*a2 + a13*a2*a1 + a13*a2 + a13 + a2 + 1>,
    <74, b18, 1>,
    <75, b17, b1 + 1>,
    <76, b16, 1>,
    <77, b15, 0>,
    <78, b14, a13*a2*a1 + a13*a2 + a13*a1 + 1>,
    <79, b13, 0>,
    <80, b12, 1>,
    <81, b11, 1>,
    <82, b10, a13*a2*a1 + a13*a2 + a13*a1 + 1>,
    <83, b9, b1*a2 + a23*a2 + a13*a2*a1 + a13*a2 + a13 + a2 + 1>,
    <84, b8, b1>,
    <85, b7, 1>,
    <86, b6, b1>,
    <87, b5, 0>,
    <88, b4, b1>,
    <89, b3, b1>,
    <90, b2, 0>,
    <91, b1, b1>,
    <92, a28, a23>,
    <93, a27, a13>,
    <94, a26, 1>,
    <95, a25, a13>,
    <96, a24, 1>,
    <97, a23, a23>,
    <98, a22, 1>,
    <99, a21, a13 + 1>,
    <100, a20, 1>,
    <101, a19, 0>,
    <102, a18, 0>,
    <103, a17, a13>,
    <104, a16, a2>,
    <105, a15, 0>,
    <106, a14, a23*a2 + a13*a1 + a13 + a1>,
    <107, a13, a13>,
    <108, a12, a2>,
    <109, a11, 0>,
    <110, a10, 1>,
    <111, a9, 1>,
    <112, a8, a2>,
    <113, a7, 1>,
    <114, a6, a1>,
    <115, a5, 0>,
    <116, a4, a2>,
    <117, a3, 1>,
    <118, a2, a2>,
    <119, a1, a1>
]
\end{verbatim}

\section{Resulting relations, general case}
\label{genrelations}

Writing out the matrices defining $Sq^8$ and $Sq^{16}$ after
reducing to the minimal set of variables defining them is impractical
since some of the entries are rather lengthy sums, making the matrices
unreadable.   
Instead, we refer the reader to the inital versions with
all indeterminates in Appendices~\ref{gensq8} and \ref{gensq16},
and provide the definitions of those indeterminates in terms
of the $13$ variables that we have reduced to.

\begin{verbatim}

Writing each of the 150 variables in terms of the final 13 variables:
the entries are <index, name, value>.

\begin{verbatim}
[
    <150, a1, a1>,
    <149, a2, a2>,
    <148, a3, a3>,
    <147, a4, 1>,
    <146, a5, a2>,
    <145, a6, 0>,
    <144, a7, 0>,
    <143, a8, a1>,
    <142, a9, 1>,
    <141, a10, a2>,
    <140, a11, a3>,
    <139, a12, a3>,
    <138, a13, 1>,
    <137, a14, 1>,
    <136, a15, 0>,
    <135, a16, a3>,
    <134, a17, 0>,
    <133, a18, a2>,
    <132, a19, 1>,
    <131, a20, a3>,
    <130, a21, a21>,
    <129, a22, a60*a3 + a47*a2 + a21*a1 + a21 + a1>,
    <128, a23, a62*a60*a3*a2 + a62*a48*a2 + a62*a47*a2 + a62*a21*a3*a2 +
        a62*a21*a2 + a62*a3*a2 + a62*a3 + a62*a2*a1 + a62*a2 + a60*a48*a2 +
        a60*a47*a2 + a60*a21*a2 + a60*a3 + a60*a2*a1 + a48*a21*a2 + a47*a21*a2 +
        a47*a2 + a21*a2*a1 + a21*a2 + a21*a1 + a21 + a3 + a2 + 1>,
    <127, a24, a62*a60*a3*a2 + a62*a48*a2 + a62*a47*a2 + a62*a21*a3*a2 +
        a62*a21*a2 + a62*a3*a2 + a62*a2*a1 + a62*a2 + a60*a48*a2 + a60*a47*a2 +
        a60*a21*a2 + a60*a2*a1 + a48*a21*a2 + a47*a21*a2 + a21*a2*a1 + a21*a2 +
        a3 + 1>,
    <126, a25, 0>,
    <125, a26, a2>,
    <124, a27, 1>,
    <123, a28, 1>,
    <122, a29, a21>,
    <121, a30, 0>,
    <120, a31, 0>,
    <119, a32, a62*a60*a3*a2 + a62*a48*a2 + a62*a47*a2 + a62*a21*a3*a2 +
        a62*a21*a2 + a62*a3*a2 + a62*a2*a1 + a62*a2 + a60*a48*a2 + a60*a47*a2 +
        a60*a21*a2 + a60*a2*a1 + a48*a21*a2 + a47*a21*a2 + a21*a2*a1 + a21*a2 +
        a3 + 1>,
    <118, a33, 0>,
    <117, a34, 0>,
    <116, a35, 1>,
    <115, a36, 1>,
    <114, a37, a3 + 1>,
    <113, a38, 1>,
    <112, a39, a21 + 1>,
    <111, a40, a62*a60*a3*a2 + a62*a48*a2 + a62*a47*a2 + a62*a21*a3*a2 +
        a62*a21*a2 + a62*a3*a2 + a62*a3 + a62*a2*a1 + a62*a2 + a60*a48*a2 +
        a60*a47*a2 + a60*a21*a2 + a60*a3 + a60*a2*a1 + a48*a21*a2 + a47*a21*a2 +
        a47*a2 + a21*a2*a1 + a21*a2 + a21*a1 + a21 + a3 + a2 + 1>,
    <110, a41, a62*a60*a3*a2 + a62*a48*a2 + a62*a47*a2 + a62*a21*a3*a2 +
        a62*a21*a2 + a62*a3*a2 + a62*a3 + a62*a2*a1 + a62*a2 + a60*a48*a2 +
        a60*a47*a2 + a60*a21*a2 + a60*a3 + a60*a2*a1 + a48*a21*a2 + a47*a21*a2 +
        a47*a2 + a21*a2*a1 + a21*a2 + a21*a1 + a21 + a3 + a2 + 1>,
    <109, a42, 0>,
    <108, a43, 1>,
    <107, a44, 1>,
    <106, a45, 0>,
    <105, a46, 1>,
    <104, a47, a47>,
    <103, a48, a48>,
    <102, a49, a62*a60*a3*a2 + a62*a60*a2 + a62*a48*a2 + a62*a21*a3*a2 +
        a62*a21*a3 + a62*a21*a1 + a62*a21 + a62*a2*a1 + a62 + a60*a48*a2 +
        a60*a21*a3 + a60*a21*a2 + a60*a21*a1 + a60*a21 + a60*a3*a2 + a60*a3 +
        a60*a2*a1 + a60*a2 + a60 + a48*a21*a2 + a48 + a47 + a21*a3 + a21*a2 +
        a3*a2 + a3 + a1>,
    <101, a50, a62*a60*a3 + a62*a48 + a62*a47 + a62*a21*a3 + a62*a21 + a62*a3*a2
        + a62*a2 + a62*a1 + a60*a48 + a60*a47 + a60*a21 + a60*a2 + a60*a1 + a60
        + a48*a21 + a48*a2 + a48 + a47*a21 + a47*a2 + a47 + a21*a1 + a21 + a2*a1
        + a2 + a1 + 1>,
    <100, a51, 1>,
    <99, a52, a62*a60*a3*a2 + a62*a48*a2 + a62*a47*a2 + a62*a21*a3*a2 +
        a62*a21*a2 + a62*a3*a2 + a62*a3 + a62*a2*a1 + a62*a2 + a60*a48*a2 +
        a60*a47*a2 + a60*a21*a2 + a60*a3 + a60*a2*a1 + a48*a21*a2 + a47*a21*a2 +
        a47*a2 + a21*a2*a1 + a21*a2 + a21*a1 + a21 + a3 + a2 + 1>,
    <98, a53, 0>,
    <97, a54, a21>,
    <96, a55, a62*a60*a3*a2 + a62*a48*a2 + a62*a47*a2 + a62*a21*a3*a2 +
        a62*a21*a2 + a62*a3*a2 + a62*a3 + a62*a2*a1 + a62*a2 + a60*a48*a2 +
        a60*a47*a2 + a60*a21*a2 + a60*a3 + a60*a2*a1 + a48*a21*a2 + a47*a21*a2 +
        a47*a2 + a21*a2*a1 + a21*a2 + a21*a1 + a21 + a3 + a2 + 1>,
    <95, a56, 1>,
    <94, a57, 1>,
    <93, a58, a62*a60*a3*a2 + a62*a60*a2 + a62*a48*a2 + a62*a21*a3*a2 +
        a62*a21*a3 + a62*a21*a1 + a62*a21 + a62*a2*a1 + a62 + a60*a48*a2 +
        a60*a21*a3 + a60*a21*a2 + a60*a21*a1 + a60*a21 + a60*a3*a2 + a60*a3 +
        a60*a2*a1 + a60*a2 + a60 + a48*a21*a2 + a48 + a47 + a21*a3 + a21*a2 +
        a3*a2 + a3 + a1>,
    <92, a59, 0>,
    <91, a60, a60>,
    <90, a61, a61>,
    <89, a62, a62>,
    <88, a63, a21>,
    <87, a64, 1>,
    <86, a65, 1>,
    <85, a66, 0>,
    <84, a67, a47>,
    <83, a68, a62*a60*a3*a2 + a62*a60*a2 + a62*a48*a2 + a62*a21*a3*a2 +
        a62*a21*a3 + a62*a21*a1 + a62*a21 + a62*a2*a1 + a62 + a60*a48*a2 +
        a60*a21*a3 + a60*a21*a2 + a60*a21*a1 + a60*a21 + a60*a3*a2 + a60*a3 +
        a60*a2*a1 + a60*a2 + a60 + a48*a21*a2 + a47 + a21*a3 + a21*a2 + a3*a2 +
        a3 + a1>,
    <82, a69, a62*a60*a3 + a62*a48 + a62*a47 + a62*a21*a3 + a62*a21 + a62*a3*a2
        + a62*a2 + a62*a1 + a60*a48 + a60*a47 + a60*a21 + a60*a2 + a60*a1 + a60
        + a48*a21 + a48*a2 + a48 + a47*a21 + a47*a2 + a47 + a21*a1 + a21 + a2*a1
        + a2 + a1 + 1>,
    <81, a70, a62*a60*a3 + a62*a48 + a62*a47 + a62*a21*a3 + a62*a21 + a62*a3*a2
        + a62*a2 + a62*a1 + a60*a48 + a60*a47 + a60*a21 + a60*a2 + a60*a1 + a60
        + a48*a21 + a48*a2 + a48 + a47*a21 + a47*a2 + a47 + a21*a1 + a21 + a2*a1
        + a2 + a1>,
    <80, a71, a60>,
    <79, a72, 1>,
    <78, a73, a62>,
    <77, a74, 0>,
    <76, a75, a62*a60*a3*a2 + a62*a48*a2 + a62*a47*a2 + a62*a21*a3*a2 +
        a62*a21*a2 + a62*a3*a2 + a62*a3 + a62*a2*a1 + a62*a2 + a60*a48*a2 +
        a60*a47*a2 + a60*a21*a2 + a60*a3 + a60*a2*a1 + a48*a21*a2 + a47*a21*a2 +
        a47*a2 + a21*a2*a1 + a21*a2 + a21*a1 + a21 + a3 + a2>,
    <75, a76, 1>,
    <74, a77, 0>,
    <73, a78, a62*a60*a3*a2 + a62*a60*a2 + a62*a48*a2 + a62*a21*a3*a2 +
        a62*a21*a3 + a62*a21*a1 + a62*a21 + a62*a2*a1 + a62 + a60*a48*a2 +
        a60*a21*a3 + a60*a21*a2 + a60*a21*a1 + a60*a21 + a60*a3*a2 + a60*a3 +
        a60*a2*a1 + a60*a2 + a60 + a48*a21*a2 + a47 + a21*a3 + a21*a2 + a3*a2 +
        a3 + a1>,
    <72, a79, 0>,
    <71, a80, a62*a60*a3 + a62*a48 + a62*a47 + a62*a21*a3 + a62*a21 + a62*a3*a2
        + a62*a2 + a62*a1 + a60*a48 + a60*a47 + a60*a21 + a60*a2 + a60*a1 + a60
        + a48*a21 + a48*a2 + a48 + a47*a21 + a47*a2 + a47 + a21*a1 + a21 + a2*a1
        + a2 + a1>,
    <70, a81, a48>,
    <69, a82, 1>,
    <68, a83, a62*a60*a3 + a62*a48 + a62*a47 + a62*a21*a3 + a62*a21 + a62*a3*a2
        + a62*a2 + a62*a1 + a60*a48 + a60*a47 + a60*a21 + a60*a2 + a60*a1 + a60
        + a48*a21 + a48*a2 + a48 + a47*a21 + a47*a2 + a47 + a21*a1 + a21 + a2*a1
        + a2 + a1 + 1>,
    <67, a84, a61>,
    <66, a85, 1>,
    <65, a86, a62>,
    <64, a87, 1>,
    <63, a88, 0>,
    <62, a89, a62*a60*a3*a2 + a62*a60*a3 + a62*a48*a2 + a62*a48 + a62*a47*a2 +
        a62*a47 + a62*a21*a3*a2 + a62*a21*a3 + a62*a21*a2 + a62*a21 + a62*a3 +
        a62*a2*a1 + a62*a1 + a60*a48*a2 + a60*a48 + a60*a47*a2 + a60*a47 +
        a60*a21*a2 + a60*a21 + a60*a3 + a60*a2*a1 + a60*a2 + a60*a1 + a60 +
        a48*a21*a2 + a48*a21 + a48*a2 + a47*a21*a2 + a47*a21 + a21*a2*a1 +
        a21*a2 + a21 + a2*a1 + a2>,
    <61, a90, a62*a60*a3 + a62*a60*a2 + a62*a48 + a62*a47*a2 + a62*a47 +
        a62*a21*a2 + a62*a21*a1 + a62*a1 + a62 + a60*a48 + a60*a47*a2 + a60*a47
        + a60*a21*a3 + a60*a21*a1 + a60*a3*a2 + a60*a1 + a48*a21 + a48*a2 +
        a47*a21*a2 + a47*a21 + a21*a3 + a21*a2*a1 + a3*a2 + a3 + a2*a1 + 1>,
    <60, a91, 1>,
    <59, a92, 1>,
    <58, a93, 1>,
    <57, a94, 1>,
    <56, a95, 0>,
    <55, a96, a62>,
    <54, a97, 0>,
    <53, a98, a62*a60*a3 + a62*a60*a2 + a62*a48 + a62*a47*a2 + a62*a47 +
        a62*a21*a2 + a62*a21*a1 + a62*a1 + a62 + a60*a48 + a60*a47*a2 + a60*a47
        + a60*a21*a3 + a60*a21*a1 + a60*a3*a2 + a60*a1 + a48*a21 + a48*a2 +
        a47*a21*a2 + a47*a21 + a21*a3 + a21*a2*a1 + a3*a2 + a3 + a2*a1 + 1>,
    <52, a99, 1>,
    <51, a100, a62*a60*a3 + a62*a48 + a62*a47 + a62*a21*a3 + a62*a21 + a62*a3*a2
        + a62*a2 + a62*a1 + a60*a48 + a60*a47 + a60*a21 + a60*a2 + a60*a1 + a60
        + a48*a21 + a48*a2 + a48 + a47*a21 + a47*a2 + a47 + a21*a1 + a21 + a2*a1
        + a2 + a1>,
    <50, a101, a62 + a60 + a21 + a2>,
    <49, a102, a62 + a61 + a60*a3 + a47*a2 + a21*a1 + a21 + a2>,
    <48, a103, 0>,
    <47, a104, a62>,
    <46, a105, a62*a60*a3*a2 + a62*a60*a3 + a62*a48*a2 + a62*a48 + a62*a47*a2 +
        a62*a47 + a62*a21*a3*a2 + a62*a21*a3 + a62*a21*a2 + a62*a21 + a62*a3 +
        a62*a2*a1 + a62*a1 + a60*a48*a2 + a60*a48 + a60*a47*a2 + a60*a47 +
        a60*a21*a2 + a60*a21 + a60*a3 + a60*a2*a1 + a60*a2 + a60*a1 + a60 +
        a48*a21*a2 + a48*a21 + a48*a2 + a47*a21*a2 + a47*a21 + a21*a2*a1 +
        a21*a2 + a21 + a2*a1 + a2>,
    <45, a106, a62*a60*a3*a2 + a62*a60*a3 + a62*a48*a2 + a62*a48 + a62*a47*a2 +
        a62*a47 + a62*a21*a3*a2 + a62*a21*a3 + a62*a21*a2 + a62*a21 + a62*a3 +
        a62*a2*a1 + a62*a1 + a60*a48*a2 + a60*a48 + a60*a47*a2 + a60*a47 +
        a60*a21*a2 + a60*a21 + a60*a3 + a60*a2*a1 + a60*a2 + a60*a1 + a60 +
        a48*a21*a2 + a48*a21 + a48*a2 + a47*a21*a2 + a47*a21 + a21*a2*a1 +
        a21*a2 + a21 + a2*a1 + a2 + 1>,
    <44, a107, 1>,
    <43, a108, 0>,
    <42, a109, a62 + a60 + a21 + a2>,
    <41, a110, 0>,
    <40, a111, 0>,
    <39, a112, 1>,
    <38, a113, a62*a60*a3*a2 + a62*a60*a3 + a62*a48*a2 + a62*a48 + a62*a47*a2 +
        a62*a47 + a62*a21*a3*a2 + a62*a21*a3 + a62*a21*a2 + a62*a21 + a62*a3 +
        a62*a2*a1 + a62*a1 + a60*a48*a2 + a60*a48 + a60*a47*a2 + a60*a47 +
        a60*a21*a2 + a60*a21 + a60*a3 + a60*a2*a1 + a60*a2 + a60*a1 + a60 +
        a48*a21*a2 + a48*a21 + a48*a2 + a47*a21*a2 + a47*a21 + a21*a2*a1 +
        a21*a2 + a21 + a2*a1 + a2 + 1>,
    <37, a114, a62*a60*a3*a2 + a62*a60*a3 + a62*a48*a2 + a62*a48 + a62*a47*a2 +
        a62*a47 + a62*a21*a3*a2 + a62*a21*a3 + a62*a21*a2 + a62*a21 + a62*a3 +
        a62*a2*a1 + a62*a1 + a60*a48*a2 + a60*a48 + a60*a47*a2 + a60*a47 +
        a60*a21*a2 + a60*a21 + a60*a3 + a60*a2*a1 + a60*a2 + a60*a1 + a60 +
        a48*a21*a2 + a48*a21 + a48*a2 + a47*a21*a2 + a47*a21 + a21*a2*a1 +
        a21*a2 + a21 + a2*a1 + a2>,
    <36, a115, a62 + a60 + a21 + a2 + 1>,
    <35, a116, 1>,
    <34, a117, a61 + a60 + a47 + a1>,
    <33, a118, 1>,
    <32, a119, 1>,
    <31, a120, a62 + a60 + a21 + a2>,
    <30, a121, 1>,
    <29, a122, a62*a60*a3*a2 + a62*a60*a3 + a62*a48*a2 + a62*a48 + a62*a47*a2 +
        a62*a47 + a62*a21*a3*a2 + a62*a21*a3 + a62*a21*a2 + a62*a21 + a62*a3 +
        a62*a2*a1 + a62*a1 + a60*a48*a2 + a60*a48 + a60*a47*a2 + a60*a47 +
        a60*a21*a2 + a60*a21 + a60*a3 + a60*a2*a1 + a60*a2 + a60*a1 + a60 +
        a48*a21*a2 + a48*a21 + a48*a2 + a47*a21*a2 + a47*a21 + a21*a2*a1 +
        a21*a2 + a21 + a2*a1 + a2 + 1>,
    <28, a123, a62 + a60 + a21 + a2>,
    <27, a124, a61 + a60 + a47 + a1>,
    <26, b1, b1>,
    <25, b2, b2>,
    <24, b3, b3>,
    <23, b4, b4>,
    <22, b5, b2 + a3 + a1>,
    <21, b6, 0>,
    <20, b7, b4>,
    <19, b8, a3 + a2 + a1 + 1>,
    <18, b9, b4 + b3 + a3 + a1>,
    <17, b10, 0>,
    <16, b11, b4 + b3 + a2 + a1>,
    <15, b12, a2 + 1>,
    <14, b13, b3 + a2 + a1>,
    <13, b14, b3 + b1 + a62*a60*a3*a2 + a62*a60*a3 + a62*a48*a2 + a62*a48 +
        a62*a47*a2 + a62*a47 + a62*a21*a3*a2 + a62*a21*a3 + a62*a21*a2 + a62*a21
        + a62*a3*a2 + a62*a2*a1 + a60*a48*a2 + a60*a48 + a60*a47*a2 + a60*a47 +
        a60*a21*a2 + a60*a21 + a60*a3*a2 + a60*a2*a1 + a60*a2 + a60 + a48*a21*a2
        + a48*a21 + a47*a21*a2 + a47*a21 + a21*a3 + a21*a2 + a21*a1 + a21 +
        a3*a2 + a3 + a2*a1 + 1>,
    <12, b15, b4 + b2 + a62*a60*a3*a2 + a62*a60*a3 + a62*a48*a2 + a62*a48 +
        a62*a47*a2 + a62*a47 + a62*a21*a3*a2 + a62*a21*a3 + a62*a21*a2 + a62*a21
        + a62*a3*a2 + a62*a3 + a62*a2*a1 + a62*a1 + a60*a48*a2 + a60*a48 +
        a60*a47*a2 + a60*a47 + a60*a21*a2 + a60*a21 + a60*a3*a2 + a60*a3 +
        a60*a2*a1 + a60*a2 + a60*a1 + a60 + a48*a21*a2 + a48*a21 + a47*a21*a2 +
        a47*a21 + a21*a2 + a21 + a2 + a1>,
    <11, b16, a2 + 1>,
    <10, b17, 0>,
    <9, b18, b4 + b2 + a62*a60*a3*a2 + a62*a60*a3 + a62*a48*a2 + a62*a48 +
        a62*a47*a2 + a62*a47 + a62*a21*a3*a2 + a62*a21*a3 + a62*a21*a2 + a62*a21
        + a62*a3*a2 + a62*a3 + a62*a2*a1 + a62*a1 + a60*a48*a2 + a60*a48 +
        a60*a47*a2 + a60*a47 + a60*a21*a2 + a60*a21 + a60*a3*a2 + a60*a3 +
        a60*a2*a1 + a60*a2 + a60*a1 + a60 + a48*a21*a2 + a48*a21 + a47*a21*a2 +
        a47*a21 + a21*a2 + a21 + a2 + a1>,
    <8, b19, a3>,
    <7, b20, b4 + b3 + b2 + b1 + a62*a1 + a60*a3 + a60*a1 + a21*a3 + a21*a1 +
        a3*a2 + a3 + a2*a1>,
    <6, b21, 1>,
    <5, b22, b2 + b1 + a62*a3 + a62*a1 + a60*a1 + a21*a1 + a3 + a2*a1 + 1>,
    <4, b23, b4 + b3 + b2 + b1 + a62*a60*a3*a2 + a62*a48*a2 + a62*a47*a2 +
        a62*a21*a3*a2 + a62*a21*a2 + a62*a3*a2 + a62*a3 + a62*a2*a1 + a62*a2 +
        a62*a1 + a60*a48*a2 + a60*a47*a2 + a60*a21*a2 + a60*a2*a1 + a60*a1 +
        a48*a21*a2 + a47*a21*a2 + a47*a2 + a21*a3 + a21*a2*a1 + a21*a2 + a3*a2 +
        a2*a1 + a2>,
    <3, b24, b3 + b1 + a62*a60*a3*a2 + a62*a60*a3 + a62*a48*a2 + a62*a48 +
        a62*a47*a2 + a62*a47 + a62*a21*a3*a2 + a62*a21*a3 + a62*a21*a2 + a62*a21
        + a62*a3*a2 + a62*a2*a1 + a60*a48*a2 + a60*a48 + a60*a47*a2 + a60*a47 +
        a60*a21*a2 + a60*a21 + a60*a3*a2 + a60*a3 + a60*a2*a1 + a60*a2 + a60 +
        a48*a21*a2 + a48*a21 + a47*a21*a2 + a47*a21 + a47*a2 + a21*a3 + a21*a2 +
        a21 + a3*a2 + a3 + a2*a1 + a1>,
    <2, b25, b2 + b1 + a62*a60*a3 + a62*a48 + a62*a47 + a62*a21*a3 + a62*a21 +
        a62*a3*a2 + a62*a3 + a62*a2 + a60*a48 + a60*a47 + a60*a21 + a60*a2 + a60
        + a48*a21 + a48*a2 + a47*a21 + a47*a2 + a21 + a3 + a2 + a1>,
    <1, b26, b1 + a62*a1 + a61*a2 + a60*a3 + a60*a2 + a60*a1>
]
\end{verbatim}

\section{Self dual actions, symmetric case}
\label{app:selfdualsym}

\begin{longtable}{>{$}c<{$} >{$}c<{$} >{$}c<{$} >{$}c<{$} >{$}c<{$} |
>{$}c<{$} >{$}c<{$} >{$}c<{$} >{$}c<{$}}
\caption{Self dual symmetric $\cA$-module structures on $\cA(2)$}\\
\hline
a_1 & a_2 & a_{13} & a_{23} & b_{1} &
 c_1 & d_1 & d_2 & d_3\\
\hline
\endfirsthead
\caption{Self dual symmetric $\cA$-module structures on $\cA(2)$ (cont.)}\\
\hline
a_1 & a_2 & a_{13} & a_{23} & b_{1} &
 c_1 & d_1 & d_2 & d_3\\
\hline
\endhead
\hline
\endfoot

0 & 0 & 1 & 1 & 0 &  1 & 0 & 0 & 0\\
0 & 0 & 1 & 1 & 0 &  1 & 0 & 0 & 1\\
0 & 0 & 1 & 1 & 0 &  1 & 1 & 1 & 0\\
0 & 0 & 1 & 1 & 0 &  1 & 1 & 1 & 1\\
\hline
1 & 0 & 1 & 0 & 1 &  0 & 0 & 0 & 0\\
1 & 0 & 1 & 0 & 1 &  0 & 0 & 0 & 1\\
1 & 0 & 1 & 0 & 1 &  0 & 1 & 1 & 0\\
1 & 0 & 1 & 0 & 1 &  0 & 1 & 1 & 1\\
\hline
1 & 1 & 0 & 0 & 0 &  0 & 0 & 1 & 0\\
1 & 1 & 0 & 0 & 0 &  0 & 0 & 1 & 1\\
1 & 1 & 0 & 0 & 0 &  0 & 1 & 0 & 0\\
1 & 1 & 0 & 0 & 0 &  0 & 1 & 0 & 1\\
\hline
1 & 1 & 0 & 0 & 1 &  0 & 0 & 0 & 0\\
1 & 1 & 0 & 0 & 1 &  0 & 0 & 0 & 1\\
1 & 1 & 0 & 0 & 1 &  0 & 1 & 1 & 0\\
1 & 1 & 0 & 0 & 1 &  0 & 1 & 1 & 1\\

\end{longtable}

\section{Self dual actions, general case}
\label{app:selfdualgen}

\begin{longtable}{>{$}c<{$} >{$}c<{$} >{$}c<{$} >{$}c<{$} >{$}c<{$} 
>{$}c<{$} >{$}c<{$} >{$}c<{$} >{$}c<{$}  |
>{$}c<{$} >{$}c<{$} >{$}c<{$} >{$}c<{$}}
\caption{Self dual $\cA$-module structures on $\cA(2)$}\\
\hline
a_1 & a_2 & a_3 & a_{21} & a_{47} & a_{48} & a_{60} &
a_{61} & a_{62} & b_1 & b_2 & b_3 & b_4\\
\hline
\endfirsthead
\caption{Self dual $\cA$-module structures on $\cA(2)$ (cont.)}\\
\hline
a_1 & a_2 & a_3 & a_{21} & a_{47} & a_{48} & a_{60} &
a_{61} & a_{62} & b_1 & b_2 & b_3 & b_4\\
\hline
\endhead
\hline
\endfoot

0 & 0 & 0 & 0 & 0 & 0 & 0 & 1 & 1 &  1 & 0 & 1 & 0\\
0 & 0 & 0 & 0 & 0 & 0 & 0 & 1 & 1 &  1 & 0 & 1 & 1\\
0 & 0 & 0 & 0 & 0 & 0 & 0 & 1 & 1 &  1 & 1 & 0 & 0\\
0 & 0 & 0 & 0 & 0 & 0 & 0 & 1 & 1 &  1 & 1 & 0 & 1\\
\hline
0 & 0 & 0 & 1 & 1 & 0 & 0 & 0 & 0 &  1 & 0 & 0 & 0\\
0 & 0 & 0 & 1 & 1 & 0 & 0 & 0 & 0 &  1 & 0 & 0 & 1\\
0 & 0 & 0 & 1 & 1 & 0 & 0 & 0 & 0 &  1 & 1 & 1 & 0\\
0 & 0 & 0 & 1 & 1 & 0 & 0 & 0 & 0 &  1 & 1 & 1 & 1\\
\hline
0 & 0 & 1 & 0 & 0 & 0 & 0 & 1 & 1 &  1 & 0 & 0 & 0\\
0 & 0 & 1 & 0 & 0 & 0 & 0 & 1 & 1 &  1 & 0 & 0 & 1\\
0 & 0 & 1 & 0 & 0 & 0 & 0 & 1 & 1 &  1 & 1 & 1 & 0\\
0 & 0 & 1 & 0 & 0 & 0 & 0 & 1 & 1 &  1 & 1 & 1 & 1\\
\hline
0 & 1 & 1 & 1 & 0 & 0 & 0 & 1 & 0 &  0 & 0 & 0 & 0\\
0 & 1 & 1 & 1 & 0 & 0 & 0 & 1 & 0 &  0 & 0 & 0 & 1\\
0 & 1 & 1 & 1 & 0 & 0 & 0 & 1 & 0 &  0 & 1 & 1 & 0\\
0 & 1 & 1 & 1 & 0 & 0 & 0 & 1 & 0 &  0 & 1 & 1 & 1\\
\hline
1 & 0 & 0 & 0 & 1 & 0 & 0 & 0 & 1 &  0 & 0 & 1 & 0\\
1 & 0 & 0 & 0 & 1 & 0 & 0 & 0 & 1 &  0 & 0 & 1 & 1\\
1 & 0 & 0 & 0 & 1 & 0 & 0 & 0 & 1 &  0 & 1 & 0 & 0\\
1 & 0 & 0 & 0 & 1 & 0 & 0 & 0 & 1 &  0 & 1 & 0 & 1\\
\hline
1 & 0 & 1 & 0 & 1 & 0 & 0 & 0 & 1 &  0 & 0 & 0 & 0\\
1 & 0 & 1 & 0 & 1 & 0 & 0 & 0 & 1 &  0 & 0 & 0 & 1\\
1 & 0 & 1 & 0 & 1 & 0 & 0 & 0 & 1 &  0 & 1 & 1 & 0\\
1 & 0 & 1 & 0 & 1 & 0 & 0 & 0 & 1 &  0 & 1 & 1 & 1\\
\hline
1 & 0 & 1 & 1 & 0 & 0 & 0 & 1 & 0 &  0 & 0 & 0 & 0\\
1 & 0 & 1 & 1 & 0 & 0 & 0 & 1 & 0 &  0 & 0 & 0 & 1\\
1 & 0 & 1 & 1 & 0 & 0 & 0 & 1 & 0 &  0 & 1 & 1 & 0\\
1 & 0 & 1 & 1 & 0 & 0 & 0 & 1 & 0 &  0 & 1 & 1 & 1\\
\hline
1 & 1 & 0 & 0 & 0 & 0 & 0 & 1 & 1 &  0 & 0 & 1 & 0\\
1 & 1 & 0 & 0 & 0 & 0 & 0 & 1 & 1 &  0 & 0 & 1 & 1\\
1 & 1 & 0 & 0 & 0 & 0 & 0 & 1 & 1 &  0 & 1 & 0 & 0\\
1 & 1 & 0 & 0 & 0 & 0 & 0 & 1 & 1 &  0 & 1 & 0 & 1\\
\hline
1 & 1 & 1 & 0 & 0 & 0 & 0 & 1 & 1 &  0 & 0 & 0 & 0\\
1 & 1 & 1 & 0 & 0 & 0 & 0 & 1 & 1 &  0 & 0 & 0 & 1\\
1 & 1 & 1 & 0 & 0 & 0 & 0 & 1 & 1 &  0 & 1 & 1 & 0\\
1 & 1 & 1 & 0 & 0 & 0 & 0 & 1 & 1 &  0 & 1 & 1 & 1\\
\hline
1 & 1 & 1 & 1 & 1 & 0 & 0 & 0 & 0 &  0 & 0 & 0 & 0\\
1 & 1 & 1 & 1 & 1 & 0 & 0 & 0 & 0 &  0 & 0 & 0 & 1\\
1 & 1 & 1 & 1 & 1 & 0 & 0 & 0 & 0 &  0 & 1 & 1 & 0\\
1 & 1 & 1 & 1 & 1 & 0 & 0 & 0 & 0 &  0 & 1 & 1 & 1\\

\end{longtable}

\section{MAGMA code, symmetric case}
\label{codesym}

The first two steps in the symmetric case are taken by this MAGMA
code.   Note that the code was run `incrementally', executing 
larger and larger initial segments of it, so that, for example,
the reference to the fact that Basis(Rel) contains 496 elements
was added after having run the code up to that point.  Similarly,
the comment that N1 = 28 and N2 = 66 was added after having run the code far 
enough to have computed these values.

\begin{verbatim}
/*
Symmetric case
*/

/* Reversing the order of the variables so that relations will 
get reduced to the earliest instance of each.
This is accomplished by reversing the AssignNames list of names,
and by reversing the assignment of R.i's to matrix entries by
using R.(N+1-i) instead.
*/

/* 
A = A(2).   Write as B + BQ_2, w. the v.s. B spanned by
the (r1,r2) and BQ_2 spanned by the (r1,r2,1).
Write A_n, B_n, Q_n = B_{n-7}Q_2 for the degree n subspaces.
*/

/* 
Then Sq^8 : A_n ---> A_n+8 can be written as a block matrix
B_n + Q_n ---> B_n+8 + Q_n+8 in the form
[ S_n  T_n     ]
[ 0    S_{n-7} ]
with S_n = B_n --> A_n --Sq^8--> A_n+8 --> B_n+8 and
 and T_n = B_n --> A_n --Sq^8--> A_n+8 --> Q_n+8 =B_n+1
*/

/*
There are N1 = 28 and N2 = 66 indeterminates required to
describe S = {S_n} and T = {T_n}, respectively.
(Calculated below)
*/


XBbas := [&cat[[ [n-3*j-7*k,j,k]
             : j in [0..3] | n-3*j-7*k ge 0 and n-3*j-7*k le 7]
             : k in [0..0]]
             : n in [0..39]];

XQbas := [&cat[[ [n-3*j-7*k,j,k]
             : j in [0..3] | n-3*j-7*k ge 0 and n-3*j-7*k le 7]
             : k in [1..1]]
             : n in [0..39]];

function B_bas(j)
  if j ge 0 and j+1 le #XBbas then
    return XBbas[j+1];
  else
    return [];
  end if;
end function;

function Q_bas(j)
  if j ge 0 and j+1 le #XQbas then
    return XQbas[j+1];
  else
    return [];
  end if;
end function;

function A_bas(j)
  return B_bas(j) cat Q_bas(j);
end function;

N1 :=  &+[#B_bas(j)*#B_bas(j+8) : j in [0..16-8]];
N2 :=  &+[#B_bas(j)*#B_bas(j+1) : j in [0..16-1]];

/* N1 := 28;   N2 := 66;
*/

/* 
Similarly Sq^16 : A_n ---> A_n+16 can be written as a block matrix
B_n + Q_n ---> B_n+16 + Q_n+16 in the form
[ S_n  T_n     ]
[ 0    S_{n-7} ]
with S_n = B_n --> A_n --Sq^16--> A_n+16 --> B_n+16 and
 and T_n = B_n --> A_n --Sq^16--> A_n+16 --> Q_n+16 =B_n+9
*/

/* Compute Sq^16 separately, later, after using degree 1 relations
implied by Adem relations among the first N1+N2 variables to simplify
the Sq^i for i < 15.
*/

M1 :=  1;
M2 :=  &+[#B_bas(j)*#B_bas(j+9) : j in [0..16-9]];

/*
M1 := 1;  M2 := 24;
*/

N := N1+N2+M1+M2;


R := PolynomialRing(GF(2),N);
AssignNames(~R,
        Reverse(["a" cat IntegerToString(i) : i in [1..N1]] cat
                ["b" cat IntegerToString(i-N1) : i in [N1+1..N1+N2]] cat
                ["c" cat IntegerToString(i-N1-N2) : i in [N1+N2+1..N1+N2+M1]] cat
                ["d" cat IntegerToString(i-N1-N2-M1) : i in [N1+N2+M1+1..N]]));

/* Define Sq^a action on Milnor basis element Sq(r1,r2,r3)
assuming that a < 8.)
*/

function MSq(a,r)
  return &cat [ [ [a+r[1]-3*i-4*j, r[2]+i-j, r[3]+j]
                 : i in [0..Min(r[1],Truncate((a-4*j)/2))] 
		   | a-2*i-4*j ge 0 and
		     IsOdd(Binomial(a+r[1]-3*i-4*j,r[1]-i)) and
		     IsOdd(Binomial(r[2]+i-j,i)) and
		     IsOdd(Binomial(r[3]+j,j)) ]
		 : j in [0..Min(r[2],Truncate(a/4))] ];
end function;

function In(x,L)
  if x in L then return 1; else return 0; end if;
end function;


/* 
Sq(i,j) = XSq[i+1,j+1] is Sq^i from degree j to i+j
*/

XSq := [ [* Matrix(R,#A_bas(j),#A_bas(i+j),
                  [ R!0 : k in [1..#A_bas(j)*#A_bas(i+j) ]])
         : j in [0..24] *]
	 : i in [0..23] ];

for i in [0..7] do
for j in [0..23] do
   XSq[i+1,j+1] := Matrix(R,#A_bas(j),#A_bas(i+j),
                   [[In(b,MSq(i,r)) : b in A_bas(i+j)] : r in A_bas(j)] );
end for;
end for;

/*
Define this AFTER computing the entries
*/

function Sq(i,j)
  return XSq[i+1,j+1];
end function;

/* Test Adem relations in this range
Sanity check:   expect no output.
*/

for n in [0..23] do
for b in [1..7] do
for a in [1..2*b-1] do
/* check that all the terms needed are defined	*/
if n+a+b le 23  and
   &and [ IsEven(Binomial(b-j-1,a-2*j)) or
             (a+b-j le 7 and j le 7)
	   : j in [0..Truncate(a/2)] ] then

  M := Sq(b,n)*Sq(a,n+b);
  for j in [0..Truncate(a/2)] do
     if IsOdd(Binomial(b-j-1,a-2*j)) then
        M +:= Sq(j,n)*Sq(a+b-j,n+j);
     end if;
  end for;
  if not IsZero(M) then
    print "Wrong: ",a,b,M;
  end if;

end if;
end for;
end for;
end for;

/* Define Sq^8
*/

XS := [* Matrix(R,#B_bas(j),#B_bas(8+j),
                  [ R!0 : k in [1..#B_bas(j)*#B_bas(8+j) ]])
         : j in [0..24] *];

last := 0;
for j in [0..24-8] do
  next := last+#B_bas(j)*#B_bas(8+j);
  XS[j+1] := Matrix(R,#B_bas(j),#B_bas(8+j),
                  [ R.(N+1-i) : i in [last+1..next] ]);
  last := next;
end for;


XT := [* Matrix(R,#B_bas(j),#B_bas(1+j),
                  [ R!0 : k in [1..#B_bas(j)*#B_bas(1+j) ]])
         : j in [0..24] *];

last := N1;
for j in [0..24-8] do
  next := last+#B_bas(j)*#B_bas(1+j);
  XT[j+1] := Matrix(R,#B_bas(j),#B_bas(1+j),
                  [ R.(N+1-i) : i in [last+1..next] ]);
  last := next;
end for;

/*
Now assemble the blocks into a single matrix
*/

for n in [0..15] do
XSq[9][n+1] := Matrix(R,#A_bas(n),#A_bas(n+8),
  [[XS[n+1][i,j] : j in [1..#B_bas(n+8)]] cat
   [XT[n+1][i,j] : j in [1..#Q_bas(n+8)]] 
    : i in [1..#B_bas(n)] ] cat
  [[ R!0         : j in [1..#B_bas(n+8)]] cat
   [XS[n-6][i,j] : j in [1..#Q_bas(n+8)]] 
    : i in [1..#Q_bas(n)] ]);
end for;

printf "\nInitial Sq^8:\n%o\n",XSq[9];


/*
Redefine Sq
*/

function Sq(i,j)
  return XSq[i+1,j+1];
end function;



/* Sq^9 = Sq^1 Sq^8
*/

for j in [0..14] do
  XSq[10,j+1] := Sq(8,j)*Sq(1,j+8);
end for;

/*
Redefine Sq
*/

function Sq(i,j)
  return XSq[i+1,j+1];
end function;


/* Sq^10 = Sq^2 Sq^8 + Sq^9 Sq^1
*/

for j in [0..13] do
  XSq[11,j+1] := Sq(8,j)*Sq(2,j+8) + Sq(1,j)*Sq(9,j+1);
end for;

/*
Redefine Sq
*/

function Sq(i,j)
  return XSq[i+1,j+1];
end function;


/* Sq^11 = Sq^1 Sq^10
*/

for j in [0..12] do
  XSq[12,j+1] := Sq(10,j)*Sq(1,j+10);
end for;

/*
Redefine Sq
*/

function Sq(i,j)
  return XSq[i+1,j+1];
end function;

/* Sq^12 = Sq^4 Sq^8 + Sq^11 Sq^1 + Sq^10 Sq^2
*/

for j in [0..11] do
  XSq[13,j+1] := Sq(8,j)*Sq(4,j+8) + Sq(1,j)*Sq(11,j+1) + Sq(2,j)*Sq(10,j+2);
end for;

/*
Redefine Sq
*/

function Sq(i,j)
  return XSq[i+1,j+1];
end function;


/* Sq^13 = Sq^1 Sq^12
*/

for j in [0..10] do
  XSq[14,j+1] := Sq(12,j)*Sq(1,j+12);
end for;

/*
Redefine Sq
*/

function Sq(i,j)
  return XSq[i+1,j+1];
end function;


/* Sq^14 = Sq^2 Sq^12 + Sq^13 Sq^1
*/

for j in [0..9] do
  XSq[15,j+1] := Sq(12,j)*Sq(2,j+12) + Sq(1,j)*Sq(13,j+1);
end for;

/*
Redefine Sq
*/

function Sq(i,j)
  return XSq[i+1,j+1];
end function;



/* Sq^15 = Sq^1 Sq^14
*/

for j in [0..8] do
  XSq[16,j+1] := Sq(14,j)*Sq(1,j+14);
end for;

/*
Redefine Sq
*/

function Sq(i,j)
  return XSq[i+1,j+1];
end function;


/* Now use Adem relations to determine relations
   implied only by Sq^i, i < 16
*/

Rel := ideal<R | 0>;

printf "\nComputing relations for Sq^8 action on A(2)\n";

for b in [1..15] do
for a in [1..Min(15,2*b-1)] do
for j in [0..23-a-b] do
if &and [ IsEven(Binomial(b-j-1,a-2*j)) or
             (a+b-j le 15 and j le 15)
	   : j in [0..Truncate(a/2)] ] then

  M := Sq(b,j)*Sq(a,j+b);
  for k in [0..Truncate(a/2)] do
     if IsOdd(Binomial(b-k-1,a-2*k)) then
        M -:= Sq(k,j)*Sq(a+b-k,j+k);
     end if;
  end for;
  if not IsZero(M) then
    /* printf "Relation from Sq^%o Sq%o:\n%o\n\n ",a,b,M; */
    Rel +:= ideal<R | &cat[[M[i,j] 
                      : i in [1..#Rows(M)]] 
		      : j in [1..#Rows(Transpose(M))]]>;
  end if;

end if;
end for;
end for;
end for;


bb := {x : x in Basis(Rel) | not IsZero(x)};
printf "There are %o relations defining Rel.\n",#bb;
bb1 := { x : x in Basis(Rel) | Degree(x) eq 1};
Rel1 := ideal<R | bb1>;
Groebner(Rel1);
printf "Of these, %o relations are of degree 1, defining Rel1\n",#bb1;
printf "The Groebner basis for Rel1 has %o elements.\n",#Basis(Rel1);

/*
Basis(Rel) contains 496 nonzero elements.   Separate out those of degree 1,
of which there are 81 independent relations,
to reduce the number of variables from 94 to 13 = 94-81.

Change the entries in XSq to their normal form w.r.t. this ideal
to eliminate excess variables, by applying the hom f, which
replaces each variable by its normal form with respect to
the ideal Rel1.

Then define Sq^16 and compute the complete ideal of all relations.
*/


f := hom<R->R | [NormalForm(R.i,Rel1) : i in [1..N]]>;


/*
Replace Sq^i entries by their normal forms to simplify
the relations produced by the remaining Adem relations
*/

for i in [0..#XSq-1] do
for j in [0..#XSq[i+1]-1] do
   XSq[i+1,j+1] := Matrix(R,#A_bas(j),#A_bas(i+j),
                   [[ f(XSq[i+1,j+1][ii,jj])
		      : jj in [1..#A_bas(i+j)]]
		      : ii in [1..#A_bas(j)]]);
end for;
end for;

/*
Redefine Sq
*/

function Sq(i,j)
  return XSq[i+1,j+1];
end function;


/* 
Now we have reduced the use of the first 94 variables down
to the minimum 13 given the linear relations they must satisfy.
Those are:  a1, a2, a13, a14, a23, 
            b1, b9, b10, b25, b26, b27, b51, b52.
Proceed to define Sq^16 and compute the remaining relations
*/


/*  Sq^16
*/

XU := [* Matrix(R,#B_bas(j),#B_bas(16+j),
                  [ R!0 : k in [1..#B_bas(j)*#B_bas(16+j) ]])
         : j in [0..24] *];

last := N1+N2;
for j in [0..24-16] do
  next := last+#B_bas(j)*#B_bas(16+j);
  XU[j+1] := Matrix(R,#B_bas(j),#B_bas(16+j),
                  [ R.(N+1-i) : i in [last+1..next] ]);
  last := next;
end for;


XV := [* Matrix(R,#B_bas(j),#B_bas(9+j),
                  [ R!0 : k in [1..#B_bas(j)*#B_bas(9+j) ]])
         : j in [0..24] *];

last := N1+N2+M1;
for j in [0..24-16] do
  next := last+#B_bas(j)*#B_bas(9+j);
  XV[j+1] := Matrix(R,#B_bas(j),#B_bas(9+j),
                  [ R.(N+1-i) : i in [last+1..next] ]);
  last := next;
end for;

for n in [0..7] do
XSq[17][n+1] := Matrix(R,#A_bas(n),#A_bas(n+16),
  [[XU[n+1][i,j] : j in [1..#B_bas(n+16)]] cat
   [XV[n+1][i,j] : j in [1..#Q_bas(n+16)]] 
    : i in [1..#B_bas(n)] ] cat
  [[ R!0         : j in [1..#B_bas(n+16)]] cat
   [XU[n-6][i,j] : j in [1..#Q_bas(n+16)]] 
    : i in [1..#Q_bas(n)] ]);
end for;


printf "Initial Sq^16:\n%o\n",XSq[17];

/*
Redefine Sq
*/

function Sq(i,j)
  return XSq[i+1,j+1];
end function;


/* Sq^17 = Sq^1 Sq^16
*/

for j in [0..6] do
  XSq[18,j+1] := Sq(16,j)*Sq(1,j+16);
end for;

/*
Redefine Sq
*/

function Sq(i,j)
  return XSq[i+1,j+1];
end function;


/* Sq^18 = Sq^2 Sq^16 + Sq^17 Sq^1
*/

for j in [0..5] do
  XSq[19,j+1] := Sq(16,j)*Sq(2,j+16) + Sq(1,j)*Sq(17,j+1);
end for;

/*
Redefine Sq
*/

function Sq(i,j)
  return XSq[i+1,j+1];
end function;


/* Sq^19 = Sq^1 Sq^18
*/

for j in [0..4] do
  XSq[20,j+1] := Sq(18,j)*Sq(1,j+18);
end for;

/*
Redefine Sq
*/

function Sq(i,j)
  return XSq[i+1,j+1];
end function;

/* Sq^20 = Sq^4 Sq^16 + Sq^19 Sq^1 + Sq^18 Sq^2
*/

for j in [0..3] do
  XSq[21,j+1] := Sq(16,j)*Sq(4,j+16) + Sq(1,j)*Sq(19,j+1) + Sq(2,j)*Sq(18,j+2);
end for;

/*
Redefine Sq
*/

function Sq(i,j)
  return XSq[i+1,j+1];
end function;


/* Sq^21 = Sq^1 Sq^20
*/

for j in [0..2] do
  XSq[22,j+1] := Sq(20,j)*Sq(1,j+20);
end for;

/*
Redefine Sq
*/

function Sq(i,j)
  return XSq[i+1,j+1];
end function;


/* Sq^22 = Sq^2 Sq^20 + Sq^21 Sq^1
*/

for j in [0..1] do
  XSq[23,j+1] := Sq(20,j)*Sq(2,j+20) + Sq(1,j)*Sq(21,j+1);
end for;

/*
Redefine Sq
*/

function Sq(i,j)
  return XSq[i+1,j+1];
end function;



/* Sq^23 = Sq^1 Sq^22
*/

for j in [0..0] do
  XSq[24,j+1] := Sq(22,j)*Sq(1,j+22);
end for;

/*
Redefine Sq
*/

function Sq(i,j)
  return XSq[i+1,j+1];
end function;


/* ---------------  Now use Adem relations to determine all relations
*/

NewRel := ideal<R | 0>;

printf "\nRelations for Sq^8 and Sq^16 action on A(2)\n\n";

for b in [1..23] do
for a in [1..2*b-1] do
for j in [0..23-a-b] do
  M := Sq(b,j)*Sq(a,j+b);
  for k in [0..Truncate(a/2)] do
     if IsOdd(Binomial(b-k-1,a-2*k)) then
        M -:= Sq(k,j)*Sq(a+b-k,j+k);
     end if;
  end for;
  if not IsZero(M) then
    /* printf "Relation from Sq^%o Sq%o:\n%o\n\n ",a,b,M; */
    NewRel +:= ideal<R | &cat[[M[i,j] 
                      : i in [1..#Rows(M)]] 
		      : j in [1..#Rows(Transpose(M))]]>;
  end if;

end for;
end for;
end for;

bb := {x : x in Basis(NewRel) | not IsZero(x)};
printf "There are %o relations defining NewRel.\n",#bb;

/*
Now eliminate variables using the new degree 1 relations
*/

Newbb1 := { x : x in bb | Degree(x) eq 1};
NewRel1 := ideal<R | Newbb1>;
Groebner(NewRel1);
printf "Of these, %o relations are of degree 1, defining NewRel1\n",#Newbb1;
printf "The Groebner basis for NewRel1 has %o elements.\n",#Basis(NewRel1);
printf "These are %o\n",Basis(NewRel1);


g := hom<R->R | [NormalForm(f(R.i),NewRel1) : i in [1..N]]>;

/* Replace elements of XSq by their normal forms w.r.t. the
linear relations, i.e., reducing to the 19 remaining variables
*/

for i in [0..#XSq-1] do
for j in [0..#XSq[i+1]-1] do
   XSq[i+1,j+1] := Matrix(R,#A_bas(j),#A_bas(i+j),
                   [[ g(XSq[i+1,j+1][ii,jj])
		      : jj in [1..#A_bas(i+j)]]
		      : ii in [1..#A_bas(j)]]);
end for;
end for;

/*
Redefine Sq
*/

function Sq(i,j)
  return XSq[i+1,j+1];
end function;

rel2 := {g(x) : x in Basis(NewRel)};
printf "Almost final relations:\n%o\n\n",rel2;

/*
Stop execution here, and export the results (the remaining
19 variables and 19 relations into the file A2Asym, where the
remaining reductions take place.   Then, import those results here
to finish computing the squaring operations in terms of the
final 9 variables, so that moddef files can be written for
each, or selected, combinations of variables.
*/

/* Now import the solutions found in A2Asym, rewriting these last 19 variables
in terms of the remaining 9 which determine the others.
This h : A --> A rewrites the 10 that must change and send
other variables to themselves, then compose this with g.
*/

/* define the 10 out of the penultimate 19 in terms
of the final 9 variables:
*/

a1 := R.119;
a2 := R.118;
a13 := R.107;
a23 := R.97;
b1 := R.91;
c1 := R.25;
d1 := R.24;
d2 := R.23;
d3 := R.22;

img := [
R.1,
R.2,
R.3,
a1*a13 + a2*b1 + b1 + c1 + d1 + 1,	/* d21	*/
R.5,
R.6,
R.7,
R.8,
R.9,
R.10,
a1 + a2*a13*a23 + a2*a13*b1 + a2*a13 + a2*b1 +
     a2 + a13*a23 + a13*b1 + a13 + b1 + d1 + d3, 	/* d14	*/
a1*a13 + a2*a13*a23 + a2*a13*b1 + a2*a13 + a2*b1 +
      a13*a23 + a13 + c1 + d2 + 1,			/* d13	*/
R.13,
R.14,
R.15,
R.16,
R.17,
R.18,
R.19,
R.20,
R.21,
R.22,
R.23,
R.24,
R.25,
R.26,
R.27,
R.28,
R.29,
R.30,
R.31,
R.32,
R.33,
R.34,
R.35,
R.36,
R.37,
R.38,
R.39,
a2*a13*a23 + a2*a13*b1 + a2*b1 + a2 + a13*a23 + a13*b1 + b1 + 1,	/* b52	*/
R.41,
R.42,
R.43,
R.44,
R.45,
R.46,
R.47,
R.48,
R.49,
R.50,
R.51,
R.52,
R.53,
R.54,
R.55,
R.56,
R.57,
R.58,
R.59,
R.60,
R.61,
R.62,
R.63,
R.64,
a1*a13 + a1 + a2*a13*b1 + a2*b1 + a2 + a13*a23 + a23,	/* b27	*/
a1*a2*a13 + a1*a13 + a1 + a2*a13*a23 + a2*a13 +
     a2*a23 + a2 + a13*b1 + a13 + a23 + 1,		/* b26	*/
a1*a2*a13 + a1*a13 + a2*a13*b1 + a2 + a13*b1 + a13 + 1,	/* b25	*/
R.68,
R.69,
R.70,
R.71,
R.72,
R.73,
R.74,
R.75,
R.76,
R.77,
R.78,
R.79,
R.80,
R.81,
a1*a2*a13 + a1*a13 + a2*a13 + 1,			/* b10	*/
a1*a2*a13 + a2*a13 + a2*a23 + a2*b1 + a2 + a13 + 1,	/* b9	*/
R.84,
R.85,
R.86,
R.87,
R.88,
R.89,
R.90,
R.91,
R.92,
R.93,
R.94,
R.95,
R.96,
R.97,
R.98,
R.99,
R.100,
R.101,
R.102,
R.103,
R.104,
R.105,
a1*a13 + a1 + a2*a23 + a13,			/* a14	*/
R.107,
R.108,
R.109,
R.110,
R.111,
R.112,
R.113,
R.114,
R.115,
R.116,
R.117,
R.118,
R.119
];

h := hom<R->R | img>;

for i in [0..#XSq-1] do
for j in [0..#XSq[i+1]-1] do
   XSq[i+1,j+1] := Matrix(R,#A_bas(j),#A_bas(i+j),
                   [[ h(XSq[i+1,j+1][ii,jj])
		      : jj in [1..#A_bas(i+j)]]
		      : ii in [1..#A_bas(j)]]);
end for;
end for;


I :=  ideal<R | [R.i^2-R.i : i in [1..119]]>;
rrr := {h(NormalForm(x,I)) : x in rel2};   
rrr := {h(NormalForm(x,I)) : x in rrr };
rrr := {x : x in rrr | not IsZero(x)};   
Rel := ideal<R | rrr>;

for i in [0..#XSq-1] do
for j in [0..#XSq[i+1]-1] do
   XSq[i+1,j+1] := Matrix(R,#A_bas(j),#A_bas(i+j),
                   [[ NormalForm(XSq[i+1,j+1][ii,jj],Rel)
                      : jj in [1..#A_bas(i+j)]]
                      : ii in [1..#A_bas(j)]]);
end for;
end for;



/*
Redefine Sq
*/

function Sq(i,j)
  return XSq[i+1,j+1];
end function;


/* Set numerical values for these last 9 and write out
a moddef file
*/

val := [
R.1,
R.2,
R.3,
R.4,
R.5,
R.6,
R.7,
R.8,
R.9,
R.10,
R.11,
R.12,
R.13,
R.14,
R.15,
R.16,
R.17,
R.18,
R.19,
R.20,
R.21,
R!0,		/* d3	*/
R!0,		/* d2	*/
R!0,		/* d1	*/
R!0,		/* c1	*/
R.26,
R.27,
R.28,
R.29,
R.30,
R.31,
R.32,
R.33,
R.34,
R.35,
R.36,
R.37,
R.38,
R.39,
R.40,
R.41,
R.42,
R.43,
R.44,
R.45,
R.46,
R.47,
R.48,
R.49,
R.50,
R.51,
R.52,
R.53,
R.54,
R.55,
R.56,
R.57,
R.58,
R.59,
R.60,
R.61,
R.62,
R.63,
R.64,
R.65,
R.66,
R.67,
R.68,
R.69,
R.70,
R.71,
R.72,
R.73,
R.74,
R.75,
R.76,
R.77,
R.78,
R.79,
R.80,
R.81,
R.82,
R.83,
R.84,
R.85,
R.86,
R.87,
R.88,
R.89,
R.90,
R!0,			/* b1	*/
R.92,
R.93,
R.94,
R.95,
R.96,
R!0,			/* a23	*/
R.98,
R.99,
R.100,
R.101,
R.102,
R.103,
R.104,
R.105,
R.106,
R!0,			/* a13	*/
R.108,
R.109,
R.110,
R.111,
R.112,
R.113,
R.114,
R.115,
R.116,
R.117,
R!0,			/* a2	*/
R!0 			/* a1	*/
];

for aa in [0,1] do
for bb in [0,1] do
for cc in [0,1] do
for dd in [0,1] do
for ee in [0,1] do
val[119] := aa;
val[118] := bb;
val[107] := cc;
val[97] := dd;
val[91] := ee;

if IsEven(aa*(bb+cc) + bb*(cc*(dd + ee) + cc + dd + ee + 1) + ee) then

for ff in [0,1] do
for gg in [0,1] do
for hh in [0,1] do
for ii in [0,1] do

val[25] := ii;
val[24] := hh;
val[23] := gg;
val[22] := ff;

k := hom<R->R | val>;

VSq := [ [* Matrix(R,#A_bas(j),#A_bas(i+j),
                  [ R!0 : kk in [1..#A_bas(j)*#A_bas(i+j) ]])
         : j in [0..24] *]
	 : i in [0..23] ];

for i in [0..#XSq-1] do
for j in [0..#XSq[i+1]-1] do
   VSq[i+1,j+1] := Matrix(R,#A_bas(j),#A_bas(i+j),
                   [[ k(XSq[i+1,j+1][ii,jj])
		      : jj in [1..#A_bas(i+j)]]
		      : ii in [1..#A_bas(j)]]);
end for;
end for;

filename := "Aof2-" cat 
            &cat[IntegerToString(Integers()!k(R.x)) 
		: x in [119,118,107,97,91,25,24,23,22]];
SetOutputFile(filename);

&+[#A_bas(n) : n in [0..23]];
degs := &cat[[n : i in [1..#A_bas(n) ]]: n in [0..23]];
count := 0;
for i in [1..#degs] do
  printf "%o",degs[i];
  count +:= 1;
  if count ge 10 then
     printf "\n";
     count := 0;
  else
     printf " ";
  end if;
end for;
printf "\n\n";

mons := &cat[ A_bas(n) : n in [0..23]];

for gg in [1..#mons] do
d := degs[gg];
for i in [1..23-d] do
  vv := VSq[i+1,d+1][Index(A_bas(d),mons[gg])];
  if not IsZero(vv) then
     mm := [Index(mons,A_bas(d+i)[ii]) 
             : ii in [1..#A_bas(d+i)] | not IsZero(vv[ii])];
     printf "%o %o %o",gg-1,i,#mm;
     for xx in mm do
       printf " %o",xx-1;
     end for;
     printf "\n";
  end if;
end for;
printf "\n";
end for;
printf "\n";



UnsetOutputFile();


end for; /* ii */
end for; /* hh */
end for; /* gg */
end for; /* ff */
end if;
end for; /* ee */
end for; /* dd */
end for; /* cc */
end for; /* bb */
end for; /* aa */

\end{verbatim}

\section{MAGMA code, symmetric case, third step}
\label{codesym3}

This code was built and executed `one variable at a time'.
That is, the code was run repeatedly, each time looking for
the last variable which could be replaced (as described
in Subsection~\ref{sub:thirdstep}) and then adding the code to replace
that variable.  Thus, in the first run, no variables were replaced.  
Only the 
(now commented out) code looking for the last variable which could be replaced
was run. That was determined and the code to replace that variable was added.
This new version was then run, with the (now commented out) code
looking for the next to last variable which could be replaced.
The code to replace that variable was then added.
This was continued until no further such reductions were possible.

The code at the end then writes out all $16$ of the points in the 
variety defining the $Sq^8$ acion in the symmetric case which is
found in Appendix~\ref{allsymSq8}.

\begin{verbatim}
/*
Symmetric case.
Third step:  use the remaining relations to continue eliminating variables 
as far as possible.
*/


S<a1,a2,a13,a14,a23,b1,b9,b10,b25,b26,b27,b52,
   c1,d1,d2,d3,d13,d14,d21>
   := PolynomialRing(GF(2),19);

vars :=
[a1,a2,a13,a14,a23,b1,b9,b10,b25,b26,b27,b52,
   c1,d1,d2,d3,d13,d14,d21];

/*
This list of relations was generated by computing
 {g(x) : x in Basis(NewRel)};
in the file A2sym.
*/

rels :=
  [ d14 + d13 + d3 + d2 + d1 + c1 + b1*a13 + b1 + a13*a1 + a2 + a1 + 1,
    b27*a13 + b27 + b26 + b25*a13 + b25 + b10 + b9*a13 + b9 + b1*a13 + a23*a13 + a13^2 +
        a13*a2 + a13*a1,
    b27*a2 + b10 + b1 + 1,
    b52 + b27 + b26 + b9 + b1*a2 + b1 + 1,
    d13 + d2 + c1 + b27*a2 + b27 + b25*a2 + b25 + b9*a2 + b9 + b1*a13 + b1*a2 + a23*a2 +
        a23 + a13*a2 + a13*a1 + a13 + a2^2 + a2*a1 + a2 + a1 + 1,
    b26*a2 + b25*a2 + b1*a2 + b1 + a23*a2 + a13*a1 + a2,
    b27*a2 + b9 + b1*a2 + b1 + a14 + a2 + a1 + 1,
    d14 + d3 + d1 + b27*a2 + b27 + b25*a2 + b25 + b9*a2 + b9 + b1*a2 + b1 + a23*a2 + a23 +
        a13*a2 + a13 + a2^2 + a2*a1,
    d13 + d2 + c1 + b27*a2 + b27 + b25*a2 + b25 + b9*a2 + b9 + b1*a13 + b1*a2 + a23 + a14
        + a13*a2 + a2^2 + a2*a1 + a2 + 1,
    b26 + b25 + b10 + b9*a13 + a23 + a14 + a13 + 1,
    b27*a2 + b9 + b1*a2 + b1 + a23*a2 + a13*a1 + a13 + a2 + 1,
    d21 + d14 + d13 + d3 + d2 + b10 + b9 + b1*a13 + a14,
    b27*a13 + b27 + b25*a13 + b9 + b1*a13 + a23*a13 + a23 + a14 + a13^2 + a13*a2 + a13*a1
        + a13 + 1,
    b52 + b27 + b25 + b9*a13 + b1 + a23 + a13 + a2 + a1,
    b10 + b9 + b1*a2 + a14 + a2 + a1,
    b52 + b26*a13 + b26 + b25 + b9 + b1 + a23 + a13 + a1,
    b26*a2 + b25*a2 + b10 + b9 + b1 + a13,
    a23*a2 + a14 + a13*a1 + a13 + a1,
    b27*a2 + b26*a2 + b25*a2 + b9 + a13 + 1
];

/* Since the variables will all be either 0 or 1, each is
equal to its own square.   Hence we reduce mod x=x^2
*/

I := ideal<S | [x-x^2 : x in vars]>;
rels := [NormalForm(x,I) : x in rels];

rels;


/*
test whether v is a term of r, 
and v is not a factor of any higher degree term.
If so then the relation r can be used to eliminate v.
*/

safe := function(v,r)	
  tt := Terms(r);
  if v in tt then
    tt2 := {x : x in tt | Degree(x) gt 1};
    f2 := &cat [Factorization(x) : x in tt2];
    if v in {x[1] : x in f2} then
      return false;
    else
      return true;
    end if;
  else
    return false;
  end if;
end function;

/*
Next few commands just check where we stand
*/

terms := &cat [ Terms(x) : x in rels];
terms := {x : x in terms};
#terms," terms";

t1 := {x : x in terms | Degree(x) eq 1};
tt2 := {x : x in terms | Degree(x) eq 2};

t2 := &cat[Factorization(x) : x in tt2];
t2 := { x[1] : x in t2};

#t1,#t2,#(t1 meet t2),"\n";


t1 := Sort([x : x in t1]);
t2 := Sort([x : x in t2]);

/*
Now, to work
*/

/* Each of the following steps arises by examining, by hand,
the chance to use the relations to eliminate variables,
starting at the end (d_24) and working forward toward a_1.
If a variable occurs alone as a degree 1 term in a relation
(checked by the command named 'safe'), we can use that relation
to eliminate it from the remaining relations.

The examination is done by executing

[<j, [<i,safe(S.j,rels[i]),rels[i]> 
       : i in [1..#rels] | S.j in Terms(rels[i])]> 
       : j in [1..#vars]];

For each variable S.j, and for each relation rels[i] which contains S.j
as a term, show the index i of the relation, whether the relation
is safe to use to eliminate S.j, and the relation.  Among these,
we choose the largest j which has a safe relation, and among the
relations, we use the shortest.

This code is commented out of the run whose output is in the paper,
for brevity, but can easily be executed by moving the end comment
marker following it to the beginning of the paragraph.
*/

/* Summary of final result:
   eliminate d21 using rels[12] (S.19)
             d14 using rels[1] (S.18)
             d13 using rels[9] (S.17)
             b52 using rels[4] (S.12)
             b27 using rels[16] (S.11)
             b26 using rels[10] (S.10)
             b25 using rels[2] (S.9)
             b10 using rels[14] (S.8)
             b9 using rels[19] (S.7)
             b1 using rels[6] (S.6)
             a14 using rels[5] (S.4)
             a2 using rels[3] (S.2)


[<S.j, [<i,safe(S.j,rels[i]),rels[i]> 
       : i in [1..#rels] | S.j in Terms(rels[i])]> 
       : j in [1..#vars]];
*/

safe(S.19,rels[12]);
g := hom<S->S | [S.i : i in [1..18]] cat
                [S.19+rels[12]]
		>;
rels := [NormalForm(g(x),I) : x in rels];
S.19;
g eq g*g;		/* sanity check	*/

/* -------- S.19 = d21 replaced ---------------------

[<S.j, [<i,safe(S.j,rels[i]),rels[i]> 
       : i in [1..#rels] | S.j in Terms(rels[i])]> 
       : j in [1..#vars]];
*/

safe(S.18,rels[1]);
g1:= hom<S->S | [S.i : i in [1..17]] cat
                [S.18+rels[1]] cat
                [S.19]
		>;
g := g*g1;		/* First apply g, then g1	*/
g := hom<S->S | [NormalForm(g(S.i),I) : i in [1..19]]>;
rels := [NormalForm(g(x),I) : x in rels];
S.18;
g eq g*g;		/* sanity check	*/

/* -------- S.18 = d14 replaced ---------------------

[<S.j, [<i,safe(S.j,rels[i]),rels[i]> 
       : i in [1..#rels] | S.j in Terms(rels[i])]> 
       : j in [1..#vars]];
*/


safe(S.17,rels[9]);
g1:= hom<S->S | [S.i : i in [1..16]] cat
                [S.17+rels[9]] cat
                [S.18,S.19]
		>;
g := g*g1;
g := hom<S->S | [NormalForm(g(S.i),I) : i in [1..19]]>;
rels := [NormalForm(g(x),I) : x in rels];
S.17;
g eq g*g;		/* sanity check	*/

/* -------- S.17 = d13 replaced ---------------------

[<S.j, [<i,safe(S.j,rels[i]),rels[i]> 
       : i in [1..#rels] | S.j in Terms(rels[i])]> 
       : j in [1..#vars]];
*/

safe(S.12,rels[4]);
g1:= hom<S->S | [S.i : i in [1..11]] cat
                [S.12+rels[4]] cat
                [S.i : i in [13..19]]
		>;
g := g*g1;
g := hom<S->S | [NormalForm(g(S.i),I) : i in [1..19]]>;
rels := [NormalForm(g(x),I) : x in rels];
S.12;
g eq g*g;		/* sanity check	*/

/* -------- S.12 = b52 replaced ---------------------

[<S.j, [<i,safe(S.j,rels[i]),rels[i]> 
       : i in [1..#rels] | S.j in Terms(rels[i])]> 
       : j in [1..#vars]];
*/


safe(S.11,rels[16]);
g1:= hom<S->S | [S.i : i in [1..10]] cat
                [S.11+rels[16]] cat
                [S.i : i in [12..19]]
		>;
g := g*g1;
g := hom<S->S | [NormalForm(g(S.i),I) : i in [1..19]]>;
rels := [NormalForm(g(x),I) : x in rels];
S.11;
g eq g*g;		/* sanity check	*/

/* -------- S.11 = b27 replaced ---------------------

[<S.j, [<i,safe(S.j,rels[i]),rels[i]> 
       : i in [1..#rels] | S.j in Terms(rels[i])]> 
       : j in [1..#vars]];
*/


safe(S.10,rels[10]);
g1:= hom<S->S | [S.i : i in [1..9]] cat
                [S.10+rels[10]] cat
                [S.i : i in [11..19]]
		>;
g := g*g1;
g := hom<S->S | [NormalForm(g(S.i),I) : i in [1..19]]>;
rels := [NormalForm(g(x),I) : x in rels];
S.10;
g eq g*g;		/* sanity check	*/

/* -------- S.10 = b26 replaced ---------------------

[<S.j, [<i,safe(S.j,rels[i]),rels[i]> 
       : i in [1..#rels] | S.j in Terms(rels[i])]> 
       : j in [1..#vars]];
*/


safe(S.9,rels[2]);
g1:= hom<S->S | [S.i : i in [1..8]] cat
                [S.9+rels[2]] cat
                [S.i : i in [10..19]]
		>;
g := g*g1;
g := hom<S->S | [NormalForm(g(S.i),I) : i in [1..19]]>;
rels := [NormalForm(g(x),I) : x in rels];
S.9;
g eq g*g;		/* sanity check	*/

/* -------- S.9 = b25 replaced ---------------------

[<S.j, [<i,safe(S.j,rels[i]),rels[i]> 
       : i in [1..#rels] | S.j in Terms(rels[i])]> 
       : j in [1..#vars]];
*/

safe(S.8,rels[14]);
g1:= hom<S->S | [S.i : i in [1..7]] cat
                [S.8+rels[14]] cat
                [S.i : i in [9..19]]
		>;
g := g*g1;
g := hom<S->S | [NormalForm(g(S.i),I) : i in [1..19]]>;
rels := [NormalForm(g(x),I) : x in rels];
S.8;
g eq g*g;		/* sanity check	*/

/* -------- S.8 = b10 replaced ---------------------

[<S.j, [<i,safe(S.j,rels[i]),rels[i]> 
       : i in [1..#rels] | S.j in Terms(rels[i])]> 
       : j in [1..#vars]];
*/


safe(S.7,rels[19]);
g1:= hom<S->S | [S.i : i in [1..6]] cat
                [S.7+rels[19]] cat
                [S.i : i in [8..19]]
		>;
g := g*g1;
g := hom<S->S | [NormalForm(g(S.i),I) : i in [1..19]]>;
rels := [NormalForm(g(x),I) : x in rels];
S.7;
g eq g*g;		/* sanity check	*/

/* -------- S.7 = b9 replaced ---------------------

[<S.j, [<i,safe(S.j,rels[i]),rels[i]> 
       : i in [1..#rels] | S.j in Terms(rels[i])]> 
       : j in [1..#vars]];
*/


safe(S.4,rels[5]);
g1:= hom<S->S | [S.i : i in [1..3]] cat
                [S.4+rels[5]] cat
                [S.i : i in [5..19]]
		>;
g := g*g1;
g := hom<S->S | [NormalForm(g(S.i),I) : i in [1..19]]>;
rels := [NormalForm(g(x),I) : x in rels];
S.4;
g eq g*g;		/* sanity check	*/

/* -------- S.4 = a14 replaced ---------------------

[<S.j, [<i,safe(S.j,rels[i]),rels[i]> 
       : i in [1..#rels] | S.j in Terms(rels[i])]> 
       : j in [1..#vars]];
*/



/* 
Now look for all solutions
over GF(2).   There are 16 of them.
*/


U<a1, a2, a13, a23, b1> := PolynomialRing(GF(2),5);
newrel :=
[
a1*a2 + a1*a13 + a2*a13*a23 + a2*a13*b1 + a2*a13 + a2*a23 + a2*b1 + a2 + b1
];


count := 0;
for a in GF(2) do
for b in GF(2) do
for c in GF(2) do
for d in GF(2) do
for e in GF(2) do
f := b*(a*c + c + d + e + 1) + c + 1;
g := a*b*c + a*c + b*c*e + b + c*e + c + 1;
h := hom<U->U | [a,b,c,d,e]>;
if &and[IsZero(h(x)) : x in newrel] then
  count +:= 1;
  printf "%4o : %o\n",count,[a,b,c,d,e,f,g];
end if;
end for;
end for;
end for;
end for;
end for;

\end{verbatim}

\section{MAGMA code, general case}
\label{codegen}

\begin{verbatim}
/* 
This is the version that computes all A-module structures on A(2),
not just the symmetric ones.
*/

/* Reversing the order of the variables so that relations will 
get reduced to the earliest instance of each.
This is accomplished by reversing the 'AssignNames list of names,
and by reversing the assignment of R.i's to matrix entries by
using R.(N+1-i) instead.
*/

/* 
A = A(2).   Write as B + BQ_2, with the v.s. B spanned by
the (r1,r2) and BQ_2 spanned by the (r1,r2,1).
Write A_n, B_n, Q_n = B_{n-7}Q_2 for the degree n subspaces.
*/

/* Compute Sq^16 separately, later, after using degree 1 relations
implied by Adem relations among the first N1 variables to simplify
the Sq^i for i < 15.
*/

XBbas := [&cat[[ [n-3*j-7*k,j,k]
             : j in [0..3] | n-3*j-7*k ge 0 and n-3*j-7*k le 7]
             : k in [0..0]]
             : n in [0..39]];

XQbas := [&cat[[ [n-3*j-7*k,j,k]
             : j in [0..3] | n-3*j-7*k ge 0 and n-3*j-7*k le 7]
             : k in [1..1]]
             : n in [0..39]];


function B_bas(j)
  if j ge 0 and j+1 le #XBbas then
    return XBbas[j+1];
  else
    return [];
  end if;
end function;

function Q_bas(j)
  if j ge 0 and j+1 le #XQbas then
    return XQbas[j+1];
  else
    return [];
  end if;
end function;

function A_bas(j)
  return B_bas(j) cat Q_bas(j);
end function;

N1 :=  &+[#A_bas(j)*#A_bas(j+8) : j in [0..24-8]];
N2 :=  &+[#A_bas(j)*#A_bas(j+16) : j in [0..24-16]];

N := N1+N2;

/*
There are N1 = 124 and N2 = 26 indeterminates required to
describe Sq^8 and Sq^16, resp.
*/


R := PolynomialRing(GF(2),N);
AssignNames(~R,
        Reverse(["a" cat IntegerToString(i) : i in [1..N1]] cat
                ["b" cat IntegerToString(i-N1) : i in [N1+1..N1+N2]] 
		));

/* Define Sq^a action on Milnor basis element Sq(r1,r2,r3)
assuming that a < 8.)
*/

function MSq(a,r)
  return &cat [ [ [a+r[1]-3*i-4*j, r[2]+i-j, r[3]+j]
                 : i in [0..Min(r[1],Truncate((a-4*j)/2))] 
		   | a-2*i-4*j ge 0 and
		     IsOdd(Binomial(a+r[1]-3*i-4*j,r[1]-i)) and
		     IsOdd(Binomial(r[2]+i-j,i)) and
		     IsOdd(Binomial(r[3]+j,j)) ]
		 : j in [0..Min(r[2],Truncate(a/4))] ];
end function;

function In(x,L)
  if x in L then return 1; else return 0; end if;
end function;


/* 
Sq(i,j) = XSq[i+1,j+1] is Sq^i from degree j to i+j
*/

XSq := [ [* Matrix(R,#A_bas(j),#A_bas(i+j),
                  [ R!0 : k in [1..#A_bas(j)*#A_bas(i+j) ]])
         : j in [0..24] *]
	 : i in [0..23] ];

for i in [0..7] do
for j in [0..23] do
   XSq[i+1,j+1] := Matrix(R,#A_bas(j),#A_bas(i+j),
                   [[In(b,MSq(i,r)) : b in A_bas(i+j)] : r in A_bas(j)] );
end for;
end for;

/*
Define this AFTER computing the entries
*/

function Sq(i,j)
  return XSq[i+1,j+1];
end function;

/*
Test Adem relations in this range
Expect no output
*/

for n in [0..23] do
for b in [1..7] do
for a in [1..2*b-1] do
/* check that all the terms needed are defined	*/
if n+a+b le 23  and
   &and [ IsEven(Binomial(b-j-1,a-2*j)) or
             (a+b-j le 7 and j le 7)
	   : j in [0..Truncate(a/2)] ] then

  M := Sq(b,n)*Sq(a,n+b);
  for j in [0..Truncate(a/2)] do
     if IsOdd(Binomial(b-j-1,a-2*j)) then
        M +:= Sq(j,n)*Sq(a+b-j,n+j);
     end if;
  end for;
  if not IsZero(M) then
    print "Wrong: ",a,b,M;
  end if;

end if;
end for;
end for;
end for;


/* Define Sq^8
*/

XSq[9] := [* Matrix(R,#A_bas(j),#A_bas(8+j),
                  [ R!0 : k in [1..#A_bas(j)*#A_bas(8+j) ]])
         : j in [0..24] *];

last := 0;
for j in [0..24-8] do
  next := last+#A_bas(j)*#A_bas(8+j);
  XSq[9][j+1] := Matrix(R,#A_bas(j),#A_bas(8+j),
                  [ R.(N+1-i) : i in [last+1..next] ]);
  last := next;
end for;

printf "\nInitial Sq^8:\n%o\n",XSq[9][1..17];



/*
Redefine Sq
*/

function Sq(i,j)
  return XSq[i+1,j+1];
end function;


/* Sq^9 = Sq^1 Sq^8	*/

for j in [0..14] do
  XSq[10,j+1] := Sq(8,j)*Sq(1,j+8);
end for;

/*
Redefine Sq
*/

function Sq(i,j)
  return XSq[i+1,j+1];
end function;


/* Sq^10 = Sq^2 Sq^8 + Sq^9 Sq^1	*/

for j in [0..13] do
  XSq[11,j+1] := Sq(8,j)*Sq(2,j+8) + Sq(1,j)*Sq(9,j+1);
end for;

/*
Redefine Sq
*/

function Sq(i,j)
  return XSq[i+1,j+1];
end function;


/* Sq^11 = Sq^1 Sq^10	*/

for j in [0..12] do
  XSq[12,j+1] := Sq(10,j)*Sq(1,j+10);
end for;

/*
Redefine Sq
*/

function Sq(i,j)
  return XSq[i+1,j+1];
end function;

/* Sq^12 = Sq^4 Sq^8 + Sq^11 Sq^1 + Sq^10 Sq^2	*/

for j in [0..11] do
  XSq[13,j+1] := Sq(8,j)*Sq(4,j+8) + Sq(1,j)*Sq(11,j+1) + Sq(2,j)*Sq(10,j+2);
end for;

/*
Redefine Sq
*/

function Sq(i,j)
  return XSq[i+1,j+1];
end function;


/* Sq^13 = Sq^1 Sq^12	*/

for j in [0..10] do
  XSq[14,j+1] := Sq(12,j)*Sq(1,j+12);
end for;

/*
Redefine Sq
*/

function Sq(i,j)
  return XSq[i+1,j+1];
end function;


/* Sq^14 = Sq^2 Sq^12 + Sq^13 Sq^1	*/

for j in [0..9] do
  XSq[15,j+1] := Sq(12,j)*Sq(2,j+12) + Sq(1,j)*Sq(13,j+1);
end for;

/*
Redefine Sq
*/

function Sq(i,j)
  return XSq[i+1,j+1];
end function;



/* Sq^15 = Sq^1 Sq^14	*/

for j in [0..8] do
  XSq[16,j+1] := Sq(14,j)*Sq(1,j+14);
end for;

/*
Redefine Sq
*/

function Sq(i,j)
  return XSq[i+1,j+1];
end function;

/* ---------------  Now use Adem relations to determine all relations
*/

Rel := ideal<R | 0>;

printf "\nComputing relations for Sq^8 action only on A(2)\n";

for b in [1..15] do
for a in [1..Min(15,2*b-1)] do
for j in [0..23-a-b] do
if &and [ IsEven(Binomial(b-j-1,a-2*j)) or
             (a+b-j le 15 and j le 15)
	   : j in [0..Truncate(a/2)] ] then

  M := Sq(b,j)*Sq(a,j+b);
  for k in [0..Truncate(a/2)] do
     if IsOdd(Binomial(b-k-1,a-2*k)) then
        M -:= Sq(k,j)*Sq(a+b-k,j+k);
     end if;
  end for;
  if not IsZero(M) then
    /* printf "Relation from Sq^%o Sq%o:\n%o\n\n ",a,b,M; */
    Rel +:= ideal<R | &cat[[M[i,j] 
                      : i in [1..#Rows(M)]] 
		      : j in [1..#Rows(Transpose(M))]]>;
  end if;

end if;
end for;
end for;
end for;


bb := { x : x in Basis(Rel) | not IsZero(x)};
printf "\nThere are %o relations defining Rel.\n",#bb;

/* Basis(Rel) contains 564 elements.   Separate out those of degree 1
to reduce the number of first variables from 124 to 19.

Use the linear relations to reduce the number of variables.
Then define Sq^16 and compute the complete ideal of all relations.
*/

bb1 := { x : x in bb | Degree(x) eq 1};
Rel1 := ideal<R | bb1>;
Groebner(Rel1);
printf "Of these, %o relations are of degree 1, defining Rel1\n",#bb1;
printf "The Groebner basis for Rel1 has %o elements.\n",#Basis(Rel1);

f := hom<R->R | [NormalForm(R.i,Rel1) : i in [1..N]]>;

/*
Replace Sq^i entries by their normal forms to simplify
the relations produced by the remaining Adem relations
*/

for i in [0..#XSq-1] do
for j in [0..#XSq[i+1]-1] do
   XSq[i+1,j+1] := Matrix(R,#A_bas(j),#A_bas(i+j),
                   [[ f(XSq[i+1,j+1][ii,jj])
		      : jj in [1..#A_bas(i+j)]]
		      : ii in [1..#A_bas(j)]]);
end for;
end for;

/*
Redefine Sq
*/

function Sq(i,j)
  return XSq[i+1,j+1];
end function;



/*
printf "\nFirst round Sq^8:\n%o\n",XSq[9][1..17];
printf "Dictionary:\n%o\n",[<i,R.i,f(R.i)> : i in [1..N]];
*/


/* 
Now we have reduced the use of the first 124 variables down
to the 19 given the linear relations the others must satisfy.
Those are:  
    a1, a2, a3, a21, a22, a23, a24, a47, a48, a49,
    a50, a60, a61, a62, a89, a90, a101, a102, a117
*/

/*
Recompute Rel now, with the smaller set of variables
*/

Rel := ideal<R | 0>;

printf "\nRecomputing relations for Sq^8 action only on A(2)\n";

for b in [1..15] do
for a in [1..Min(15,2*b-1)] do
for j in [0..23-a-b] do
if &and [ IsEven(Binomial(b-j-1,a-2*j)) or
             (a+b-j le 15 and j le 15)
	   : j in [0..Truncate(a/2)] ] then

  M := Sq(b,j)*Sq(a,j+b);
  for k in [0..Truncate(a/2)] do
     if IsOdd(Binomial(b-k-1,a-2*k)) then
        M -:= Sq(k,j)*Sq(a+b-k,j+k);
     end if;
  end for;
  if not IsZero(M) then
    /* printf "Relation from Sq^%o Sq%o:\n%o\n\n ",a,b,M; */
    Rel +:= ideal<R | &cat[[M[i,j] 
                      : i in [1..#Rows(M)]] 
		      : j in [1..#Rows(Transpose(M))]]>;
  end if;

end if;
end for;
end for;
end for;


bb := { x : x in Basis(Rel) | not IsZero(x)};
printf "\nThere are now %o relations defining Rel.\n",#bb;

/* Basis(Rel) now contains 22 elements.   Separate out those of degree 1
to reduce the number of first variables from 124 to 16 now.  The
remaining 16 are

    a1, a2, a3, a21, a22, a23, a24, a47,
    a48, a49, a50, a60, a61, a62, a90, a102

Use these linear relations to reduce the number of variables.
Then define Sq^16 and compute the complete ideal of all relations.
*/

bb1 := { x : x in Basis(Rel) | Degree(x) eq 1};
Rel1 := ideal<R | bb1>;
Groebner(Rel1);
printf "Of these, %o relations are of degree 1, defining Rel1\n",#bb1;
printf "The Groebner basis for Rel1 has %o elements.\n",#Basis(Rel1);

ff := hom<R->R | [NormalForm(R.i,Rel1) : i in [1..N]]>;

/*
Replace Sq^i entries by their normal forms to simplify
the relations produced by the remaining Adem relations
*/

for i in [0..#XSq-1] do
for j in [0..#XSq[i+1]-1] do
   XSq[i+1,j+1] := Matrix(R,#A_bas(j),#A_bas(i+j),
                   [[ ff(f(XSq[i+1,j+1][ii,jj]))
		      : jj in [1..#A_bas(i+j)]]
		      : ii in [1..#A_bas(j)]]);
end for;
end for;

/*
Redefine Sq
*/

function Sq(i,j)
  return XSq[i+1,j+1];
end function;


/*
printf "\nSecond round Sq^8:\n%o\n",XSq[9][1..17];
printf "Dictionary:\n%o\n",[<i,R.i,ff(f(R.i))> : i in [1..N]];
*/

/*
Recompute Rel a third time, with the smaller set of variables
*/

Rel := ideal<R | 0>;

printf "\nRecomputing relations for Sq^8 action only on A(2)\n";

for b in [1..15] do
for a in [1..Min(15,2*b-1)] do
for j in [0..23-a-b] do
if &and [ IsEven(Binomial(b-j-1,a-2*j)) or
             (a+b-j le 15 and j le 15)
	   : j in [0..Truncate(a/2)] ] then

  M := Sq(b,j)*Sq(a,j+b);
  for k in [0..Truncate(a/2)] do
     if IsOdd(Binomial(b-k-1,a-2*k)) then
        M -:= Sq(k,j)*Sq(a+b-k,j+k);
     end if;
  end for;
  if not IsZero(M) then
    /* printf "Relation from Sq^%o Sq%o:\n%o\n\n ",a,b,M; */
    Rel +:= ideal<R | &cat[[M[i,j] 
                      : i in [1..#Rows(M)]] 
		      : j in [1..#Rows(Transpose(M))]]>;
  end if;

end if;
end for;
end for;
end for;


bb := { x : x in Basis(Rel) | not IsZero(x)};
printf "\nThere are now %o relations defining Rel.\n",#bb;

/* Basis(Rel) now contains 17 elements.   None are of degree 1.
The remaining variables are still these 16:

    a1, a2, a3, a21, a22, a23, a24, a47,
    a48, a49, a50, a60, a61, a62, a90, a102

58 variables are reduced to constants.

Use the linear relations to reduce the number of variables.
Then define Sq^16 and compute the complete ideal of all relations.
*/

bb1 := { x : x in Basis(Rel) | Degree(x) eq 1};
Rel1 := ideal<R | bb1>;
Groebner(Rel1);
printf "Of these, %o relations are of degree 1, defining Rel1\n",#bb1;
printf "The Groebner basis for Rel1 has %o elements.\n",#Basis(Rel1);

fff := hom<R->R | [NormalForm(R.i,Rel1) : i in [1..N]]>;


/*
Replace Sq^i entries by their normal forms to simplify
the relations produced by the remaining Adem relations
*/

for i in [0..#XSq-1] do
for j in [0..#XSq[i+1]-1] do
   XSq[i+1,j+1] := Matrix(R,#A_bas(j),#A_bas(i+j),
                   [[ fff(ff(f(XSq[i+1,j+1][ii,jj])))
		      : jj in [1..#A_bas(i+j)]]
		      : ii in [1..#A_bas(j)]]);
end for;
end for;

/*
Redefine Sq
*/

function Sq(i,j)
  return XSq[i+1,j+1];
end function;

/*
printf "\nThird round Sq^8:\n%o\n",XSq[9][1..17];
printf "Dictionary:\n%o\n",[<i,R.i,fff(ff(f(R.i)))> : i in [1..N]];
*/

/*
Consolidate all the linear relations into one ideal and one hom
*/

Rel1 := ideal<R | [R.i - fff(ff(f(R.i))) : i in [1..N]]>;
f := hom<R->R | [NormalForm(R.i,Rel1) : i in [1..N]]>;

printf "\nRemaining variables:\n";
for i in [1..124] do
if R.(151-i) eq f(R.(151-i)) then
  printf " %o\n",<151-i,R.(151-i)>;
end if;
end for;
printf "\n";


/*
Proceed to define Sq^16 and compute the remaining relations
*/


/*  Sq^16
*/

XSq[17] := [* Matrix(R,#A_bas(j),#A_bas(16+j),
                  [ R!0 : k in [1..#A_bas(j)*#A_bas(16+j) ]])
         : j in [0..24] *];

last := N1;
for j in [0..24-16] do
  next := last+#A_bas(j)*#A_bas(16+j);
  XSq[17][j+1] := Matrix(R,#A_bas(j),#A_bas(16+j),
                  [ R.(N+1-i) : i in [last+1..next] ]);
  last := next;
end for;

printf "\nInitial Sq^16:\n%o\n",XSq[17][1..9];


/*
Redefine Sq
*/

function Sq(i,j)
  return XSq[i+1,j+1];
end function;


/* Sq^17 = Sq^1 Sq^16
*/

for j in [0..6] do
  XSq[18,j+1] := Sq(16,j)*Sq(1,j+16);
end for;

/*
Redefine Sq
*/

function Sq(i,j)
  return XSq[i+1,j+1];
end function;


/* Sq^18 = Sq^2 Sq^16 + Sq^17 Sq^1
*/

for j in [0..5] do
  XSq[19,j+1] := Sq(16,j)*Sq(2,j+16) + Sq(1,j)*Sq(17,j+1);
end for;

/*
Redefine Sq
*/

function Sq(i,j)
  return XSq[i+1,j+1];
end function;


/* Sq^19 = Sq^1 Sq^18
*/

for j in [0..4] do
  XSq[20,j+1] := Sq(18,j)*Sq(1,j+18);
end for;

/*
Redefine Sq
*/

function Sq(i,j)
  return XSq[i+1,j+1];
end function;

/* Sq^20 = Sq^4 Sq^16 + Sq^19 Sq^1 + Sq^18 Sq^2
*/

for j in [0..3] do
  XSq[21,j+1] := Sq(16,j)*Sq(4,j+16) + Sq(1,j)*Sq(19,j+1) + Sq(2,j)*Sq(18,j+2);
end for;

/*
Redefine Sq
*/

function Sq(i,j)
  return XSq[i+1,j+1];
end function;


/* Sq^21 = Sq^1 Sq^20
*/

for j in [0..2] do
  XSq[22,j+1] := Sq(20,j)*Sq(1,j+20);
end for;

/*
Redefine Sq
*/

function Sq(i,j)
  return XSq[i+1,j+1];
end function;


/* Sq^22 = Sq^2 Sq^20 + Sq^21 Sq^1
*/

for j in [0..1] do
  XSq[23,j+1] := Sq(20,j)*Sq(2,j+20) + Sq(1,j)*Sq(21,j+1);
end for;

/*
Redefine Sq
*/

function Sq(i,j)
  return XSq[i+1,j+1];
end function;



/* Sq^23 = Sq^1 Sq^22
*/

for j in [0..0] do
  XSq[24,j+1] := Sq(22,j)*Sq(1,j+22);
end for;

/*
Redefine Sq
*/

function Sq(i,j)
  return XSq[i+1,j+1];
end function;


/* ---------------  Now use Adem relations to determine all relations
*/

NewRel := ideal<R | 0>;

printf "\nRelations for Sq^8 and Sq^16 action on A(2)\n\n";

for b in [1..23] do
for a in [1..2*b-1] do
for j in [0..23-a-b] do
  M := Sq(b,j)*Sq(a,j+b);
  for k in [0..Truncate(a/2)] do
     if IsOdd(Binomial(b-k-1,a-2*k)) then
        M -:= Sq(k,j)*Sq(a+b-k,j+k);
     end if;
  end for;
  if not IsZero(M) then
    /* printf "Relation from Sq^%o Sq%o:\n%o\n\n ",a,b,M; */
    NewRel +:= ideal<R | &cat[[M[i,j] 
                      : i in [1..#Rows(M)]] 
		      : j in [1..#Rows(Transpose(M))]]>;
  end if;

end for;
end for;
end for;

bb := { x : x in Basis(NewRel) | not IsZero(x)};
printf "\nThere are %o relations defining NewRel.\n",#bb;

/* Now eliminate variables using the new degree 1 relations
*/

Newbb1 := { x : x in bb | Degree(x) eq 1};
NewRel1 := ideal<R | Newbb1>;
Groebner(NewRel1);
printf "Of these, %o relations are of degree 1, defining NewRel1\n",#Newbb1;
printf "The Groebner basis for NewRel1 has %o elements.\n",#Basis(NewRel1);

g := hom<R->R | [NormalForm(f(R.i),NewRel1) : i in [1..N]]>;

/* 92 relations, 45 of degree 1, Groebnerized is 18, leaving 8 b_i's
*/

/* Replace elements of XSq by their normal forms w.r.t. the
linear relations, i.e., reducing to the 16 + 8 remaining variables
*/

for i in [0..#XSq-1] do
for j in [0..#XSq[i+1]-1] do
   XSq[i+1,j+1] := Matrix(R,#A_bas(j),#A_bas(i+j),
                   [[ g(XSq[i+1,j+1][ii,jj])
		      : jj in [1..#A_bas(i+j)]]
		      : ii in [1..#A_bas(j)]]);
end for;
end for;

/*
Redefine Sq
*/

function Sq(i,j)
  return XSq[i+1,j+1];
end function;


/*
Recompute using the remaining 24 variables
*/


NewRel := ideal<R | 0>;

printf "\nRelations for Sq^8 and Sq^16 action on A(2)\n\n";

for b in [1..23] do
for a in [1..2*b-1] do
for j in [0..23-a-b] do
  M := Sq(b,j)*Sq(a,j+b);
  for k in [0..Truncate(a/2)] do
     if IsOdd(Binomial(b-k-1,a-2*k)) then
        M -:= Sq(k,j)*Sq(a+b-k,j+k);
     end if;
  end for;
  if not IsZero(M) then
    /* printf "Relation from Sq^%o Sq%o:\n%o\n\n ",a,b,M; */
    NewRel +:= ideal<R | &cat[[M[i,j] 
                      : i in [1..#Rows(M)]] 
		      : j in [1..#Rows(Transpose(M))]]>;
  end if;

end for;
end for;
end for;

bb := { x : x in Basis(NewRel) | not IsZero(x)};
printf "\nThere are now %o relations defining NewRel.\n",#bb;

/* Now eliminate variables using the new degree 1 relations
(no such relations!)
*/

Newbb1 := { x : x in bb | Degree(x) eq 1};
NewRel1 := ideal<R | Newbb1>;
Groebner(NewRel1);
printf "Of these, %o relations are of degree 1, defining NewRel1\n",#Newbb1;
printf "The Groebner basis for NewRel1 has %o elements.\n",#Basis(NewRel1);

/*
22 nonzero relations now, none linear, so NewRel can stay
as is and no new hom to replace variables is needed.
*/

/*
We are now reduced to the 24 variables
    a1, a2, a3, a21, a22, a23, a24,
    a47, a48, a49, a50, a60, a61, a62,
    a90, a102,
    b1, b2, b3, b4, b14, b15, b22, b26
*/

/* There are 22 relations between them 
*/


HERE ------;


/* Now import the solutions found in AA, rewriting these last 19 variables
in terms of the remaining 9 which determine the others.
Do this h : A --> A which rewrites the 10 that must change and sending
other variables to themselves, then compose this with g.
*/

/* define the 10 out of the penultimate 19 in terms
of the final 9 variables:
*/

a1 := R.150;
a2 := R.149;
a3 := R.148;
a21 := R.130;
a47 := R.104;
a48 := R.103;
a60 := R.91;
a61 := R.90;
a62 := R.89;
b1 := R.26;
b2 := R.25;
b3 := R.24;
b4 := R.23;

img := [

a1*a60 + a1*a62 + a2*a60 + a2*a61 + a3*a60 + b1,				/* b26	*/
R.2,
R.3,
R.4,
a1*a2 + a1*a21 + a1*a60 + a1*a62 + a3*a62 + a3 + b1 + b2 + 1,			/* b22	*/
R.6,
R.7,
R.8,
R.9,
R.10,
R.11,
a1*a2*a60 + a1*a2*a62 + a1*a60 + a1*a62 + a1 + a2*a3*a21*a62 + 
        a2*a3*a60*a62 + a2*a3*a60 + a2*a3*a62 + a2*a21*a47 + a2*a21*a48 + 
        a2*a21*a60 + a2*a21*a62 + a2*a21 + a2*a47*a60 + a2*a47*a62 + a2*a48*a60 
        + a2*a48*a62 + a2*a60 + a2 + a3*a21*a62 + a3*a60*a62 + a3*a60 + a3*a62 +
        a21*a47 + a21*a48 + a21*a60 + a21*a62 + a21 + a47*a60 + a47*a62 + 
        a48*a60 + a48*a62 + a60 + b2 + b4,					/* b15	*/
a1*a2*a60 + a1*a2*a62 + a1*a2 + a1*a21 + a2*a3*a21*a62 + 
        a2*a3*a60*a62 + a2*a3*a60 + a2*a3*a62 + a2*a3 + a2*a21*a47 + a2*a21*a48 
        + a2*a21*a60 + a2*a21*a62 + a2*a21 + a2*a47*a60 + a2*a47*a62 + 
        a2*a48*a60 + a2*a48*a62 + a2*a60 + a3*a21*a62 + a3*a21 + a3*a60*a62 + a3
        + a21*a47 + a21*a48 + a21*a60 + a21*a62 + a21 + a47*a60 + a47*a62 + 
        a48*a60 + a48*a62 + a60 + b1 + b3 + 1,					/* b14	*/
R.14,
R.15,
R.16,
R.17,
R.18,
R.19,
R.20,
R.21,
R.22,
R.23,
R.24,
R.25,
R.26,
R.27,
R.28,
R.29,
R.30,
R.31,
R.32,
R.33,
R.34,
R.35,
R.36,
R.37,
R.38,
R.39,
R.40,
R.41,
R.42,
R.43,
R.44,
R.45,
R.46,
R.47,
R.48,
a1*a21 + a2*a47 + a2 + a3*a60 + a21 + a61 + a62,				/* a102	*/
R.50,
R.51,
R.52,
R.53,
R.54,
R.55,
R.56,
R.57,
R.58,
R.59,
R.60,
 a1*a2*a21 + a1*a2 + a1*a21*a60 + a1*a21*a62 + a1*a60 + a1*a62 + 
        a2*a3*a60 + a2*a3 + a2*a21*a47 + a2*a21*a62 + a2*a47*a60 + a2*a47*a62 + 
        a2*a48 + a2*a60*a62 + a3*a21*a60 + a3*a21 + a3*a60*a62 + a3 + a21*a47 + 
        a21*a48 + a47*a60 + a47*a62 + a48*a60 + a48*a62 + a62 + 1,		/* a90	*/
R.62,
R.63,
R.64,
R.65,
R.66,
R.67,
R.68,
R.69,
R.70,
R.71,
R.72,
R.73,
R.74,
R.75,
R.76,
R.77,
R.78,
R.79,
R.80,
R.81,
R.82,
R.83,
R.84,
R.85,
R.86,
R.87,
R.88,
R.89,
R.90,
R.91,
R.92,
R.93,
R.94,
R.95,
R.96,
R.97,
R.98,
R.99,
R.100,
a1*a2 + a1*a21 + a1*a60 + a1*a62 + a1 + a2*a3*a62 + a2*a47 + 
        a2*a48 + a2*a60 + a2*a62 + a2 + a3*a21*a62 + a3*a60*a62 + a21*a47 + 
        a21*a48 + a21*a60 + a21*a62 + a21 + a47*a60 + a47*a62 + a47 + a48*a60 + 
        a48*a62 + a48 + a60 + 1,					/* a50	*/
a1*a2*a60 + a1*a2*a62 + a1*a21*a60 + a1*a21*a62 + a1 + 
        a2*a3*a21*a62 + a2*a3*a60*a62 + a2*a3*a60 + a2*a3 + a2*a21*a48 + 
        a2*a21*a60 + a2*a21 + a2*a48*a60 + a2*a48*a62 + a2*a60*a62 + a2*a60 + 
        a3*a21*a60 + a3*a21*a62 + a3*a21 + a3*a60 + a3 + a21*a60 + a21*a62 + a47
        + a48 + a60 + a62,						/* a49	*/
R.103,
R.104,
R.105,
R.106,
R.107,
R.108,
R.109,
R.110,
R.111,
R.112,
R.113,
R.114,
R.115,
R.116,
R.117,
R.118,
R.119,
R.120,
R.121,
R.122,
R.123,
R.124,
R.125,
R.126,
a1*a2*a21 + a1*a2*a60 + a1*a2*a62 + a2*a3*a21*a62 + a2*a3*a60*a62 +
        a2*a3*a62 + a2*a21*a47 + a2*a21*a48 + a2*a21*a60 + a2*a21*a62 + a2*a21 +
        a2*a47*a60 + a2*a47*a62 + a2*a48*a60 + a2*a48*a62 + a2*a62 + a3 + 1,	/*a24	*/
a1*a2*a21 + a1*a2*a60 + a1*a2*a62 + a1*a21 + a2*a3*a21*a62 + 
        a2*a3*a60*a62 + a2*a3*a62 + a2*a21*a47 + a2*a21*a48 + a2*a21*a60 + 
        a2*a21*a62 + a2*a21 + a2*a47*a60 + a2*a47*a62 + a2*a47 + a2*a48*a60 + 
        a2*a48*a62 + a2*a62 + a2 + a3*a60 + a3*a62 + a3 + a21 + 1,		/*a23	*/
a1*a21 + a1 + a2*a47 + a3*a60 + a21,						/*a22	*/
R.130,
R.131,
R.132,
R.133,
R.134,
R.135,
R.136,
R.137,
R.138,
R.139,
R.140,
R.141,
R.142,
R.143,
R.144,
R.145,
R.146,
R.147,
R.148,
R.149,
R.150

];

h := hom<R->R | img>;

for i in [0..#XSq-1] do
for j in [0..#XSq[i+1]-1] do
   XSq[i+1,j+1] := Matrix(R,#A_bas(j),#A_bas(i+j),
                   [[ h(XSq[i+1,j+1][ii,jj])
		      : jj in [1..#A_bas(i+j)]]
		      : ii in [1..#A_bas(j)]]);
end for;
end for;

printf  "XSq applied h\n";

I :=  ideal<R | [R.i^2-R.i : i in [1..N]]>;
rrr := {h(NormalForm(x,I)) : x in rel2};

printf  "First rrr\n";

rrr := {h(NormalForm(x,I)) : x in rrr };
printf  "Second rrr\n";


rrr := {NormalForm(x,I) : x in rrr | not IsZero(x)};
printf  "Third rrr\n";

printf "%o\n",rrr;

Rel := ideal<R | rrr>;
printf  "Rel\n";
/* Groebner(Rel); */

printf "Groebner\n";

for i in [0..#XSq-1] do
for j in [0..#XSq[i+1]-1] do
   XSq[i+1,j+1] := Matrix(R,#A_bas(j),#A_bas(i+j),
                   [[ NormalForm(XSq[i+1,j+1][ii,jj],Rel)
                      : jj in [1..#A_bas(i+j)]]
                      : ii in [1..#A_bas(j)]]);
end for;
end for;

printf "Reduced XSq mod Rel\n";



/*
Redefine Sq
*/

function Sq(i,j)
  return XSq[i+1,j+1];
end function;



/*
Here, we want to say the lower left blocks are 0 to find the actions which preserve the
short exact sequence coming from the Q_2 action.

Oh, there are only three such actions, with three entries, all equal to a60.
So a60 = 0 iff the ses is preserved!

Next, find the equations for the difference between the action on Im Q_2 and
on the quotient by it.
*/

function eqq(i)
if i eq 0 then return R!1; else return R!0; end if;
end function;

XIn := [* Matrix(R,#B_bas(n),#A_bas(n),
        [[eqq(i-j) : j in [1..#A_bas(n)]]
	              : i in [1..#B_bas(n)]]) : n in [0..55] *];

XPn := [* Matrix(R,#A_bas(n),#B_bas(n),
        [[eqq(i-j) : j in [1..#B_bas(n)]]
	              : i in [1..#A_bas(n)]]) : n in [0..55] *];

XJn := [* Matrix(R,#B_bas(n-7),#A_bas(n),
        [[eqq(i-j+#B_bas(n)) : j in [1..#A_bas(n)]]
	              : i in [1..#B_bas(n-7)]]) : n in [0..55] *];

XQn := [* Matrix(R,#A_bas(n),#B_bas(n-7),
        [[eqq(i-j-#B_bas(n)) : j in [1..#B_bas(n-7)]]
	              : i in [1..#A_bas(n)]]) : n in [0..55] *];

Diffs := [ [* XIn[n+1] * Sq(k,n) * XPn[n+k+1]
              - XJn[n+7+1] * Sq(k,n+7) * XQn[n+k+7+1]
             : n in [0..16] *]
	     : k in [0..23] ];



HERE			This is here to be easy to find.
--------------------------;   This is here to stop magma from executing any more commands

/* Sq^8 and Sq^16 in this final version was written out in the
files Sq8 and Sq16
*/


/* Set numerical values for these last 9 and write out
a moddef file
*/

val := [
R.1,
R.2,
R.3,
R.4,
R.5,
R.6,
R.7,
R.8,
R.9,
R.10,
R.11,
R.12,
R.13,
R.14,
R.15,
R.16,
R.17,
R.18,
R.19,
R.20,
R.21,
R!0,		/* d3	*/
R!0,		/* d2	*/
R!0,		/* d1	*/
R!0,		/* c1	*/
R.26,
R.27,
R.28,
R.29,
R.30,
R.31,
R.32,
R.33,
R.34,
R.35,
R.36,
R.37,
R.38,
R.39,
R.40,
R.41,
R.42,
R.43,
R.44,
R.45,
R.46,
R.47,
R.48,
R.49,
R.50,
R.51,
R.52,
R.53,
R.54,
R.55,
R.56,
R.57,
R.58,
R.59,
R.60,
R.61,
R.62,
R.63,
R.64,
R.65,
R.66,
R.67,
R.68,
R.69,
R.70,
R.71,
R.72,
R.73,
R.74,
R.75,
R.76,
R.77,
R.78,
R.79,
R.80,
R.81,
R.82,
R.83,
R.84,
R.85,
R.86,
R.87,
R.88,
R.89,
R.90,
R!0,			/* b1	*/
R.92,
R.93,
R.94,
R.95,
R.96,
R!0,			/* a23	*/
R.98,
R.99,
R.100,
R.101,
R.102,
R.103,
R.104,
R.105,
R.106,
R!0,			/* a13	*/
R.108,
R.109,
R.110,
R.111,
R.112,
R.113,
R.114,
R.115,
R.116,
R.117,
R!0,			/* a2	*/
R!0 			/* a1	*/
];

for aa in [0,1] do
for bb in [0,1] do
for cc in [0,1] do
for dd in [0,1] do
for ee in [0,1] do
val[119] := aa;
val[118] := bb;
val[107] := cc;
val[97] := dd;
val[91] := ee;

if IsEven(aa*(bb+cc) + bb*(cc*(dd + ee) + cc + dd + ee + 1) + ee) then

for ff in [0,1] do
for gg in [0,1] do
for hh in [0,1] do
for ii in [0,1] do

val[25] := ii;
val[24] := hh;
val[23] := gg;
val[22] := ff;

k := hom<R->R | val>;

VSq := [ [* Matrix(R,#A_bas(j),#A_bas(i+j),
                  [ R!0 : k in [1..#A_bas(j)*#A_bas(i+j) ]])
         : j in [0..24] *]
	 : i in [0..23] ];

for i in [0..#XSq-1] do
for j in [0..#XSq[i+1]-1] do
   VSq[i+1,j+1] := Matrix(R,#A_bas(j),#A_bas(i+j),
                   [[ k(XSq[i+1,j+1][ii,jj])
		      : jj in [1..#A_bas(i+j)]]
		      : ii in [1..#A_bas(j)]]);
end for;
end for;

filename := "Aof2-" cat 
            &cat[IntegerToString(Integers()!k(R.x)) 
		: x in [119,118,107,97,91,25,24,23,22]];
SetOutputFile(filename);

&+[#A_bas(n) : n in [0..23]];
degs := &cat[[n : i in [1..#A_bas(n) ]]: n in [0..23]];
count := 0;
for i in [1..#degs] do
  printf "%o",degs[i];
  count +:= 1;
  if count ge 10 then
     printf "\n";
     count := 0;
  else
     printf " ";
  end if;
end for;
printf "\n\n";

mons := &cat[ A_bas(n) : n in [0..23]];

for gg in [1..#mons] do
d := degs[gg];
for i in [1..23-d] do
  vv := VSq[i+1,d+1][Index(A_bas(d),mons[gg])];
  if not IsZero(vv) then
     mm := [Index(mons,A_bas(d+i)[ii]) 
             : ii in [1..#A_bas(d+i)] | not IsZero(vv[ii])];
     printf "%o %o %o",gg-1,i,#mm;
     for xx in mm do
       printf " %o",xx-1;
     end for;
     printf "\n";
  end if;
end for;
printf "\n";
end for;
printf "\n";



UnsetOutputFile();


end for; /* ii */
end for; /* hh */
end for; /* gg */
end for; /* ff */
end if;
end for; /* ee */
end for; /* dd */
end for; /* cc */
end for; /* bb */
end for; /* aa */


/* The rest of this was written to apply to Z.
It checks that the Adem relations are now satisfied.

Then computes CSq, the conjugate squaring ops, 
aND LOOKS FOR THE EQUATIONS which tell the dual
structure.

Do that for A(2).

Then compare the results for Z =A(2)/Q_2 A(2) and
Q_2 A(2).
*/




f := hom<R->R |
   [ R.1, R.2, R!1, R.2, R!0, R.1, R!1, R.2, R!1, R!1,
     R!0, R.2, R.13, (R.1+R.13+R.1*R.13+R.2*R.23), R!0,
       R.2, R.13, R!0, R!0, R!1,
     R.13+1, R!1, R.23, R!1, R.13, R!1, R.13, R.23, R.29]>;


/* Apply f should make all the relations zero
*/


for b in [1..16] do
for a in [1..2*b-1] do
printf "%o,%o; ",a,b;
for j in [0..16-a-b] do
  M := Sq(b,j)*Sq(a,j+b);
  for k in [0..Truncate(a/2)] do
     if IsOdd(Binomial(b-k-1,a-2*k)) then
        M -:= Sq(k,j)*Sq(a+b-k,j+k);
     end if;
  end for;
  if not IsZero(M) then
    d := Dim(M);
    MM := Matrix(R,d[1],d[2],[[ f(M[i,j]) : j in [1..d[2]]] : i in [1..d[1]]]);
    if not IsZero(MM) then
       printf "Stiil nonzero Relation from Sq^%o Sq%o:\n%o\n\n ",a,b,MM;
    end if;
  end if;

end for;
end for;
end for;

/* It does.   Record the new Sq^i
*/

for i in [1..#XSq] do
for j in [1..#XSq[i]] do
if not IsZero(XSq[i,j]) then
  d := Dim(XSq[i,j]);
  XSq[i,j] := Matrix(R,d[1],d[2],
              [[ f(XSq[i,j][ii,jj]) : jj in [1..d[2]]] 
	         : ii in [1..d[1]]]);
end if;
end for;
end for;

/*
Redefine Sq
*/

function Sq(i,j)
  return XSq[i+1,j+1];
end function;



/* Compute dual, by computing the conjugate action:
   CSq[i+1][j+1] is chi(Sq^i) : Z_j ---> Z_i+j
*/

CSq := XSq;

for a in [3..16] do
  for j in [0..#CSq[a+1]-a-1] do
     CSq[a+1][j+1] +:=  &+[  CSq[k+1,j+1]* XSq[a-k+1,j+k+1]
                            : k in [1..a-1] ];
  end for;
end for;

rev := func< M | Transpose(ReverseRows(ReverseColumns(M))) >;
\end{verbatim}

\section{MAGMA code, general case, third step}
\label{codegen3}

\begin{verbatim}

/*
Third step:  use the quadratic relations to continue eliminating variables 
as far as possible.
*/


S<a1,a2,a3,a21,a22,a23,a24,a47,a48,a49,a50,a60,a61,a62,a90,a102,b1,b2,b3,b4,b14,b15,b22,b26>
   := PolynomialRing(GF(2),24);

vars :=
[ a1, a2, a3, a21, a22, a23, a24,
    a47, a48, a49, a50, a60, a61, a62,
    a90, a102,
    b1, b2, b3, b4, b14, b15, b22, b26
    ];

/*
This list of relations was generated by computing
 {g(x) : x in Basis(NewRel)};
above.
*/

rels :=
  [ a60*a3 + a47*a2 + a22 + a21*a1 + a21 + a1,
    a90 + a62*a3 + a50 + a49 + a23 + a3 + 1,
    a49*a2 + a48*a2 + a24 + a23 + a21 + a3,
    b15 + b4 + b2 + a50*a2 + a50 + a48*a2 + a48 + a47*a2 + a47 + a23*a2 + a23 +
        a21*a2 + a21 + a3*a2 + a3 + a2^2 + a2*a1,
    a50*a21 + a50 + a49 + a48*a21 + a48 + a47*a21 + a24 + a23*a21 + a23 + a21^2
        + a21*a3 + a21*a2 + a21*a1,
    a90 + a62*a23 + a60*a23 + a50 + a48 + a47 + a23*a21 + a23*a2 + a21 + a3 + a2
        + a1,
    a102 + a62*a3 + a62 + a61 + a50*a2 + a23 + a3 + 1,
    a62*a3 + a60*a3 + a49*a2 + a48*a2 + a47*a2 + a21*a1 + a3 + a2,
    b14 + b3 + b1 + a62*a3 + a62*a1 + a60*a3 + a60*a1 + a50*a2 + a50 + a48*a2 +
        a48 + a47*a2 + a47 + a23*a2 + a23 + a21*a3 + a21*a2 + a21*a1 + a21 +
        a2^2 + a2 + a1 + 1,
    a50*a2 + a49*a2 + a48*a2 + a23 + a21 + 1,
    a102 + a62*a60 + a62*a47 + a62*a1 + a62 + a61*a60 + a61*a21 + a61*a2 + a61 +
        a60*a50 + a60*a48 + a60*a47 + a60*a23 + a60*a21 + a60*a3 + a60*a2 +
        a60*a1 + a60 + a21 + a2,
    a62*a3 + a60*a3 + a50*a2 + a47*a2 + a23 + a21*a1 + a21 + a3 + a2 + 1,
    a62*a23 + a60*a23 + a50*a21 + a50 + a48*a21 + a47*a21 + a47 + a23*a2 + a23 +
        a22 + a21^2 + a21*a3 + a21*a2 + a21*a1 + a21 + 1,
    a90 + a62*a49 + a62 + a60*a49 + a49*a21 + a49*a2 + a49 + a48 + a47 + a23 +
        a21 + a3 + a2 + a1,
    a50*a2 + a24 + a3 + 1,
    a62*a3 + a24 + a23 + a22 + a2 + a1,
    a62*a23 + a60*a23 + a49 + a48 + a47 + a24 + a23*a21 + a23*a2 + a22 + a21 +
        1,
    a62*a3 + a50*a2 + a23 + a22 + a3 + a2 + a1 + 1,
    b26 + b14 + b3 + a62*a3 + a61*a2 + a60*a3 + a60*a2 + a50*a2 + a50 + a48*a2 +
        a48 + a47 + a23*a2 + a23 + a22 + a21*a3 + a21*a2 + a2^2 + a2 + 1,
    a102 + a62*a23 + a62 + a61 + a60*a23 + a50*a21 + a50 + a48*a21 + a47*a21 +
        a47 + a23*a2 + a23 + a21^2 + a21*a3 + a21*a2 + a21*a1 + a21 + a2 + a1 +
        1,
    b22 + b15 + b14 + b4 + b3 + a62*a3 + a60*a3 + a24 + a23 + a22 + a21*a3 +
        a3*a2,
    b15 + b14 + b4 + b3 + b2 + b1 + a62*a3 + a62*a1 + a60*a3 + a60*a1 + a21*a3 +
        a21*a1 + a3*a2 + a3 + a2*a1 + a2 + a1 + 1
];

/* Since the variables will all be either 0 or 1, each is
equal to its own square.   Hence we reduce mod x=x^2
*/

I := ideal<S | [x-x^2 : x in vars]>;
rels := [NormalForm(x,I) : x in rels];

rels;


/*
test whether v is a term of r, 
and v is not a factor of any higher degree term.
If so then the relation r can be used to eliminate v.
*/

safe := function(v,r)	
  tt := Terms(r);
  if v in tt then
    tt2 := {x : x in tt | Degree(x) gt 1};
    f2 := &cat [Factorization(x) : x in tt2];
    if v in {x[1] : x in f2} then
      return false;
    else
      return true;
    end if;
  else
    return false;
  end if;
end function;

/*
Next few commands just check where we stand
*/

terms := &cat [ Terms(x) : x in rels];
terms := {x : x in terms};
#terms," terms";

t1 := {x : x in terms | Degree(x) eq 1};
tt2 := {x : x in terms | Degree(x) eq 2};

t2 := &cat[Factorization(x) : x in tt2];
t2 := { x[1] : x in t2};

#t1,#t2,#(t1 meet t2),"\n";


t1 := Sort([x : x in t1]);
t2 := Sort([x : x in t2]);

/*
Now, to work
*/

/* Each of the following steps arises by examining, by hand,
the chance to use the relations to eliminate variables,
starting at the end (d_24) and workiing forward toward a_1.
If a variable occurs alone as a degree 1 term in a relation
(checked by the command named 'safe'), we can use that relation
to eliminate it from the remaining relations.

The examination is done by executing

[<j, [<i,safe(S.j,rels[i]),rels[i]> 
       : i in [1..#rels] | S.j in Terms(rels[i])]> 
       : j in [1..#vars]];

For each var S.j, and for each relation rels[i] which contains S.j
as a term, show the index i of the relation, whether the relation
is safe to use to eliminate S.j, and the relation.  Among these,
we choose the largest j which has a safe relation, and among the
relations, we use the shortest.

*/

/* eliminate 
 b26 = S.24 using rels[19]
 b22 = S.23 using rels[21]
 b15 = S.22 using rels[4]
 b14 = S.21 using rels[9]
 a102 = S.16 using rels[7]
 a90 = S.15 using rels[2]
 a50 = S.11 using rels[14]
 a49 = S.10 using rels[17]
 a24 = S.7 using rels[16]
 a23 = S.6 using rels[15]
 a22 = S.5 using rels[1]
*/

/* Move this paragraph forward as the calculation progresses	*/
[<j,S.j, [<i,safe(S.j,rels[i]),rels[i]> 
       : i in [1..#rels] | S.j in Terms(rels[i])]> 
       : j in [1..#vars]];
HERE ------;   This syntax error stops execution

safe(S.24,rels[19]);
g := hom<S->S | [S.i : i in [1..23]] cat
                [S.24+rels[19]]
		>;
rels := [NormalForm(g(x),I) : x in rels];
S.24;
g eq g*g;		/* sanity check	*/


safe(S.23,rels[21]);
g1:= hom<S->S | [S.i : i in [1..22]] cat
                [S.23+rels[21]] cat
                [S.24]
		>;
g := g*g1;		/* First apply g, then g1	*/
g := hom<S->S | [NormalForm(g(S.i),I) : i in [1..24]]>;
rels := [NormalForm(g(x),I) : x in rels];
S.23;
g eq g*g;		/* sanity check	*/

safe(S.22,rels[4]);
g1:= hom<S->S | [S.i : i in [1..21]] cat
                [S.22+rels[4]] cat
                [S.i : i in [23..24]]
		>;
g := g*g1;		/* First apply g, then g1	*/
g := hom<S->S | [NormalForm(g(S.i),I) : i in [1..24]]>;
rels := [NormalForm(g(x),I) : x in rels];
S.22;
g eq g*g;		/* sanity check	*/


safe(S.21,rels[9]);
g1:= hom<S->S | [S.i : i in [1..20]] cat
                [S.21+rels[9]] cat
                [S.i : i in [22..24]]
		>;
g := g*g1;		/* First apply g, then g1	*/
g := hom<S->S | [NormalForm(g(S.i),I) : i in [1..24]]>;
rels := [NormalForm(g(x),I) : x in rels];
S.21;
g eq g*g;		/* sanity check	*/



safe(S.16,rels[7]);
g1:= hom<S->S | [S.i : i in [1..15]] cat
                [S.16+rels[7]] cat
                [S.i : i in [17..24]]
		>;
g := g*g1;		/* First apply g, then g1	*/
g := hom<S->S | [NormalForm(g(S.i),I) : i in [1..24]]>;
rels := [NormalForm(g(x),I) : x in rels];
S.16;
g eq g*g;		/* sanity check	*/


safe(S.15,rels[2]);
g1:= hom<S->S | [S.i : i in [1..14]] cat
                [S.15+rels[2]] cat
                [S.i : i in [16..24]]
		>;
g := g*g1;		/* First apply g, then g1	*/
g := hom<S->S | [NormalForm(g(S.i),I) : i in [1..24]]>;
rels := [NormalForm(g(x),I) : x in rels];
S.15;
g eq g*g;		/* sanity check	*/


safe(S.11,rels[14]);
g1:= hom<S->S | [S.i : i in [1..10]] cat
                [S.11+rels[14]] cat
                [S.i : i in [12..24]]
		>;
g := g*g1;		/* First apply g, then g1	*/
g := hom<S->S | [NormalForm(g(S.i),I) : i in [1..24]]>;
rels := [NormalForm(g(x),I) : x in rels];
S.11;
g eq g*g;		/* sanity check	*/


safe(S.10,rels[17]);
g1:= hom<S->S | [S.i : i in [1..9]] cat
                [S.10+rels[17]] cat
                [S.i : i in [11..24]]
		>;
g := g*g1;		/* First apply g, then g1	*/
g := hom<S->S | [NormalForm(g(S.i),I) : i in [1..24]]>;
rels := [NormalForm(g(x),I) : x in rels];
S.10;
g eq g*g;		/* sanity check	*/


safe(S.7,rels[16]);
g1:= hom<S->S | [S.i : i in [1..6]] cat
                [S.7+rels[16]] cat
                [S.i : i in [8..24]]
		>;
g := g*g1;		/* First apply g, then g1	*/
g := hom<S->S | [NormalForm(g(S.i),I) : i in [1..24]]>;
rels := [NormalForm(g(x),I) : x in rels];
S.7;
g eq g*g;		/* sanity check	*/


safe(S.6,rels[15]);
g1:= hom<S->S | [S.i : i in [1..5]] cat
                [S.6+rels[15]] cat
                [S.i : i in [7..24]]
		>;
g := g*g1;		/* First apply g, then g1	*/
g := hom<S->S | [NormalForm(g(S.i),I) : i in [1..24]]>;
rels := [NormalForm(g(x),I) : x in rels];
S.6;
g eq g*g;		/* sanity check	*/


safe(S.5,rels[1]);
g1:= hom<S->S | [S.i : i in [1..4]] cat
                [S.5+rels[1]] cat
                [S.i : i in [6..24]]
		>;
g := g*g1;		/* First apply g, then g1	*/
g := hom<S->S | [NormalForm(g(S.i),I) : i in [1..24]]>;
rels := [NormalForm(g(x),I) : x in rels];
S.5;
g eq g*g;		/* sanity check	*/

/* Now count the number of solutions.
Since the b_i do not occur in the remaining 3 relations,
we need only consider the subring generated by the a_i
*/

/* 
 Now reduce to the 9+4=13 vars and 3 rels above.
 The rels do not involve the b_i, so there are 16 = 2^4 values of Sq^16
 determined by b1,b2,b3,b4 for each Sq^8 (determined by a1,...,a62).
*/

T<a1,a2,a3,a21,a47,a48,a60,a61,a62,b1,b2,b3,b4> :=
      PolynomialRing(GF(2),13);


newrels := 
[
    a1*a21 + a1*a60 + a1*a62 + a2*a3*a21*a60*a62 + a2*a3*a21*a60 + a2*a3*a21 + a2*a3*a60 + a2*a3*a62 + a2*a3 + a2*a21*a47*a60 + a2*a21*a47*a62 + a2*a21*a47 + a2*a21*a48*a60 + 
        a2*a21*a48*a62 + a2*a21*a60 + a2*a48 + a2 + a3*a21*a60*a62 + a3*a21 + a3*a60*a62 + a3*a60 + a21*a47*a60 + a21*a47*a62 + a21*a48*a60 + a21*a48*a62 + a21*a62 + a21 + a47*a60 + 
        a47*a62 + a48*a60 + a48*a62 + a48 + a62 + 1,
    a1*a2*a21*a60 + a1*a2*a60*a62 + a1*a21 + a1*a60*a62 + a1*a60 + a1*a62 + a2*a3*a21*a60*a62 + a2*a3*a60*a62 + a2*a21*a47*a60 + a2*a21*a48*a60 + a2*a21*a60*a62 + a2*a47*a60*a62 + 
        a2*a47*a60 + a2*a47 + a2*a48*a60*a62 + a2*a61 + a3*a21*a60*a62 + a21*a47*a60 + a21*a48*a60 + a21*a60*a62 + a21*a61 + a47*a60*a62 + a47*a60 + a47*a62 + a48*a60*a62 + a48*a60 + 
        a60*a61 + a60*a62,
    a1*a2*a21*a60 + a1*a2*a21*a62 + a1*a2*a60 + a1*a2*a62 + a1*a2 + a1*a21 + a2*a3*a21*a60 + a2*a3*a21 + a2*a3*a60*a62 + a2*a21*a48 + a2*a21*a62 + a2*a21 + a2*a48*a60 + a2*a48*a62 + 
        a2*a60*a62 + a2*a62 + a2 + a3*a60 + a3*a62 + a3

];


count := 0;
for r1 in GF(2) do
for r2 in GF(2) do
for r3 in GF(2) do
for r4 in GF(2) do
for r5 in GF(2) do
for r6 in GF(2) do
for r7 in GF(2) do
for r8 in GF(2) do
for r9 in GF(2) do
h := hom<T->T | [r1,r2,r3,r4,r5,r6,r7,r8,r9,0,0,0,0]>;
if &and[IsZero(h(x)) : x in newrels] then
  count +:= 1;
  printf "%4o : %o\n",count, [r1,r2,r3,r4,r5,r6,r7,r8,r9];
end if;
end for;
end for;
end for;
end for;
end for;
end for;
end for;
end for;
end for;


/*
Print out the expressions for the variables we have replaced
in terms of those which remain for use in the last step of
the main MAGMA code A2gen.
*/

printf "\n\nExpressions for the variables we have replaced in\n";
printf "terms of those which remain:\n";

[<S.i,g(S.i)> : i in [1..#vars] | not S.i eq g(S.i)];

\end{verbatim}

\section{MAGMA code for computing duals}
\label{codedual}

We must first compute
\[
\chi(Sq^a) = \sum_{k=1}^a Sq^k \chi(Sq^{a-k}).
\]

The MAGMA code for the conjugate is short:
\begin{verbatim}

CSq := XSq;

for a in [3..16] do
  for j in [0..#CSq[a+1]-a-1] do
     CSq[a+1][j+1] +:=  &+[  CSq[k+1,j+1]* XSq[a-k+1,j+k+1]
                            : k in [1..a-1] ];
  end for;
end for;

\end{verbatim}

Starting by setting {\tt CSq} equal to {\tt XSq} 
initializes {\tt CSq[a+1]}, which will represent the linear transformations
$\chi(Sq^a)$, by including the $k=a$ term of the sum.
This is the whole of $\chi(Sq^a)$ for $a < 3$, so our loop
adding the remaining terms runs from $a=3$ to $a=16$ (the higher
$Sq^a$ are not needed in order to identify the $\cA$-module structure).

We then use the data in the Tables {\tt Coeffs} below to 
compute the coefficients
recorded in parts (2) and (3) of Theorems~\ref{thm:symmetric},
\ref{thm:general}, and \ref{thm:Baction}.\\

\begin{verbatim}
/*  
Now compute D : R -> R expressing the duality 
*/

/* Table of coefficients for V_sym

Coeffs := [
 < 119, a1, 8, [0,0,0], [5,1,0]>,
 < 118, a2, 8, [0,0,0], [2,2,0]>,
 <  91, b1, 8, [0,0,0], [1,0,1]>,
 < 107, a13, 8, [4,0,0], [6,2,0]>,
 <  97, a23, 8, [0,2,0], [5,3,0]>,
 <  25, c1, 16, [0,0,0], [7,3,0]>,
 <  24, d1, 16, [0,0,0], [6,1,1]>,
 <  23, d2, 16, [0,0,0], [3,2,1]>,
 <  22, d3, 16, [0,0,0], [0,3,1]>
 ];
*/

/* Table of coefficients for V_gen
*/

Coeffs := [
 < 150, a1, 8, [0,0,0], [5,1,0]>,
 < 149, a2, 8, [0,0,0], [2,2,0]>,
 < 148, a3, 8, [0,0,0], [1,0,1]>,
 < 130, a21, 8, [4,0,0], [6,2,0]>,
 < 104, a47, 8, [0,2,0], [5,3,0]>,
 < 103, a48, 8, [0,2,0], [7,0,1]>,
 <  91, a60, 8, [0,0,1], [6,3,0]>,
 <  90, a61, 8, [0,0,1], [5,1,1]>,
 <  89, a62, 8, [0,0,1], [2,2,1]>,
 <  26, b1, 16, [0,0,0], [7,3,0]>,
 <  25, b2, 16, [0,0,0], [6,1,1]>,
 <  24, b3, 16, [0,0,0], [3,2,1]>,
 <  23, b4, 16, [0,0,0], [0,3,1]>
 ];

theta := func< r | [7-r[1], 3-r[2], 1-r[3]]>;
deg := func< r | r[1] + 3*r[2] + 7*r[3]>;

Dimg := [ R.i : i in [1..N] ]; 	/* initial duality hom images	*/

for c in Coeffs do
  k := deg(c[4]);
  i := c[3];
  x := c[4];
  y := c[5];
  Dimg[c[1]] := CSq[i+1,24-k-i]
                  [Index(A_bas(23-k-i),theta(y))]
		  [Index(A_bas(23-k),theta(x))];
end for;

D := hom<R->R | Dimg>;
\end{verbatim}

The three Theorems in Section~\ref{sec:duality} then record the values of the
homomorphism $D$ on the generators.
The modifications to the code above for $\cB(2)$ should be evident.

\section{MAGMA output from first two steps}
\label{firstout}

\begin{verbatim}

Loading "xA8"

Initial Sq^8:
[*
[a1 a2 b1],

[a3 a4 a5 b2],

[a6 a7 a8 b3 b4],

[ a9 a10  b5  b6]
[a11 a12  b7  b8],

[a13 a14  b9 b10]
[a15 a16 b11 b12],

[a17 a18 b13 b14 b15]
[a19 a20 b16 b17 b18],

[a21 b19 b20 b21]
[a22 b22 b23 b24]
[a23 b25 b26 b27],

[a24 b28 b29]
[a25 b30 b31]
[a26 b32 b33]
[  0  a1  a2],

[a27 b34 b35 b36]
[a28 b37 b38 b39]
[  0  a3  a4  a5],

[b40 b41 b42]
[b43 b44 b45]
[b46 b47 b48]
[ a6  a7  a8],

[b49 b50]
[b51 b52]
[b53 b54]
[ a9 a10]
[a11 a12],

[b55 b56]
[b57 b58]
[a13 a14]
[a15 a16],

[b59 b60]
[b61 b62]
[a17 a18]
[a19 a20],

[b63]
[b64]
[a21]
[a22]
[a23],

[b65]
[a24]
[a25]
[a26],

[b66]
[a27]
[a28],

Matrix with 4 rows and 0 columns,

Matrix with 3 rows and 0 columns,

Matrix with 2 rows and 0 columns,

Matrix with 2 rows and 0 columns,

Matrix with 2 rows and 0 columns,

Matrix with 1 row and 0 columns,

Matrix with 1 row and 0 columns,

Matrix with 1 row and 0 columns,

Matrix with 0 rows and 0 columns
*]

Computing relations for Sq^8 action only on A(2)

There are 2220 relations defining Rel.

Of these, 452 relations are of degree 1, defining Rel1

The Groebner basis for Rel1 has 81 elements.

Initial Sq^16:
[*
[c1 d1 d2 d3],

[d4 d5 d6],

[d7 d8],

[ d9 d10]
[d11 d12],

[d13 d14]
[d15 d16],

[d17]
[d18],

[d19]
[d20]
[d21],

[d22]
[d23]
[d24]
[ c1],

Matrix with 3 rows and 0 columns,

Matrix with 4 rows and 0 columns,

Matrix with 5 rows and 0 columns,

Matrix with 4 rows and 0 columns,

Matrix with 4 rows and 0 columns,

Matrix with 5 rows and 0 columns,

Matrix with 4 rows and 0 columns,

Matrix with 3 rows and 0 columns,

Matrix with 4 rows and 0 columns,

Matrix with 3 rows and 0 columns,

Matrix with 2 rows and 0 columns,

Matrix with 2 rows and 0 columns,

Matrix with 2 rows and 0 columns,

Matrix with 1 row and 0 columns,

Matrix with 1 row and 0 columns,

Matrix with 1 row and 0 columns,

Matrix with 0 rows and 0 columns
*]

Relations for Sq^8 and Sq^16 action on A(2)


There are 318 relations defining NewRel.

Of these, 50 relations are of degree 1, defining NewRel1

The Groebner basis for NewRel1 has 19 elements.

Almost final relations:
{
0,
d14 + d13 + d3 + d2 + d1 + c1 + b1*a13 + b1 + a13*a1 + a2 + a1 + 1,
b27*a13 + b27 + b26 + b25*a13 + b25 + b10 + b9*a13 + b9 + b1*a13 + a23*a13 + 
a13^2 + a13*a2 + a13*a1,
b27*a2 + b10 + b1 + 1,
b52 + b27 + b26 + b9 + b1*a2 + b1 + 1,
d13 + d2 + c1 + b27*a2 + b27 + b25*a2 + b25 + b9*a2 + b9 + b1*a13 + b1*a2 + 
a23*a2 + a23 + a13*a2 + a13*a1 + a13 + a2^2 + a2*a1 + a2 + a1 + 1,
b26*a2 + b25*a2 + b1*a2 + b1 + a23*a2 + a13*a1 + a2,
b27*a2 + b9 + b1*a2 + b1 + a14 + a2 + a1 + 1,
d14 + d3 + d1 + b27*a2 + b27 + b25*a2 + b25 + b9*a2 + b9 + b1*a2 + b1 + a23*a2 +
a23 + a13*a2 + a13 + a2^2 + a2*a1,
d13 + d2 + c1 + b27*a2 + b27 + b25*a2 + b25 + b9*a2 + b9 + b1*a13 + b1*a2 + a23 
+ a14 + a13*a2 + a2^2 + a2*a1 + a2 + 1,
b26 + b25 + b10 + b9*a13 + a23 + a14 + a13 + 1,
b27*a2 + b9 + b1*a2 + b1 + a23*a2 + a13*a1 + a13 + a2 + 1,
d21 + d14 + d13 + d3 + d2 + b10 + b9 + b1*a13 + a14,
b27*a13 + b27 + b25*a13 + b9 + b1*a13 + a23*a13 + a23 + a14 + a13^2 + a13*a2 + 
a13*a1 + a13 + 1,
b52 + b27 + b25 + b9*a13 + b1 + a23 + a13 + a2 + a1,
b10 + b9 + b1*a2 + a14 + a2 + a1,
b52 + b26*a13 + b26 + b25 + b9 + b1 + a23 + a13 + a1,
b26*a2 + b25*a2 + b10 + b9 + b1 + a13,
a23*a2 + a14 + a13*a1 + a13 + a1,
b27*a2 + b26*a2 + b25*a2 + b9 + a13 + 1
}


In file "xA8", line 781, column 27:
>> ---------------------;   This is here to stop magma from executing any further
                             ^
User error: bad syntax

\end{verbatim}

\section{MAGMA output from third step}
\label{thirdstep}

\begin{verbatim}

Loading "AA"
[
    a1*a13 + a1 + a2 + a13*b1 + b1 + c1 + d1 + d2 + d3 + d13 + d14 + 1,
    a1*a13 + a2*a13 + a13*a23 + a13*b1 + a13*b9 + a13*b25 + a13*b27 + a13 + b9 + b10 + b25
        + b26 + b27,
    a2*b27 + b1 + b10 + 1,
    a2*b1 + b1 + b9 + b26 + b27 + b52 + 1,
    a1*a2 + a1*a13 + a1 + a2*a13 + a2*a23 + a2*b1 + a2*b9 + a2*b25 + a2*b27 + a13*b1 + a13
        + a23 + b9 + b25 + b27 + c1 + d2 + d13 + 1,
    a1*a13 + a2*a23 + a2*b1 + a2*b25 + a2*b26 + a2 + b1,
    a1 + a2*b1 + a2*b27 + a2 + a14 + b1 + b9 + 1,
    a1*a2 + a2*a13 + a2*a23 + a2*b1 + a2*b9 + a2*b25 + a2*b27 + a2 + a13 + a23 + b1 + b9 +
        b25 + b27 + d1 + d3 + d14,
    a1*a2 + a2*a13 + a2*b1 + a2*b9 + a2*b25 + a2*b27 + a13*b1 + a14 + a23 + b9 + b25 + b27
        + c1 + d2 + d13 + 1,
    a13*b9 + a13 + a14 + a23 + b10 + b25 + b26 + 1,
    a1*a13 + a2*a23 + a2*b1 + a2*b27 + a2 + a13 + b1 + b9 + 1,
    a13*b1 + a14 + b9 + b10 + d2 + d3 + d13 + d14 + d21,
    a1*a13 + a2*a13 + a13*a23 + a13*b1 + a13*b25 + a13*b27 + a14 + a23 + b9 + b27 + 1,
    a1 + a2 + a13*b9 + a13 + a23 + b1 + b25 + b27 + b52,
    a1 + a2*b1 + a2 + a14 + b9 + b10,
    a1 + a13*b26 + a13 + a23 + b1 + b9 + b25 + b26 + b52,
    a2*b25 + a2*b26 + a13 + b1 + b9 + b10,
    a1*a13 + a1 + a2*a23 + a13 + a14,
    a2*b25 + a2*b26 + a2*b27 + a13 + b9 + 1
]
35  terms
19 9 9 

true
d21
true
true
d14
true
true
d13
true
true
b52
true
true
b27
true
true
b26
true
true
b25
true
true
b10
true
true
b9
true
true
a14
true

In file "AA", line 256, column 14:
>> HERE --------;
                ^
User error: bad syntax
\end{verbatim}

\end{document}